\newtheorem{lemma}{Lemma}
\newtheorem{theorem}{Theorem}
\newtheorem{remark}{Remark}
\newtheorem{definition}{Definition}
\def\ps@pprintTitle{%
  \let\@oddhead\@empty
  \let\@evenhead\@empty
  \let\@oddfoot\@empty
  \let\@evenfoot\@oddfoot
}
\begin{document}

\begin{frontmatter}

\title{The Feller diffusion conditioned on a single ancestral founder}
\author[label1]{Conrad J.\ Burden}
\ead{conrad.burden@anu.edu.au}
\author[label2]{Robert C.\ Griffiths}
\ead{Bob.Griffiths@Monash.edu}
\address[label1]{Mathematical Sciences Institute, Australian National University, Canberra, Australia}
\address[label2]{School of Mathematics, Monash University, Australia}


\begin{abstract}
We examine the distributional properties of a Feller diffusion $\big(X(\tau)\big)_{\tau \in [0, t]}$ conditioned on the current population $X(t)$  
having a single ancestor at time zero.  The approach is novel and is based on an interpretation of Feller's original solution according to which the
current population is comprised of a Poisson number of exponentially distributed families, each descended from a single ancestor.  The distribution of 
the number of ancestors at intermediate times and the joint density of coalescent times is determined under assumptions of initiation of the process 
from a single ancestor 
at a specified time in the past, including infinitely far in the past, and for the case of a uniform prior on the time since initiation.  Also calculated are 
the joint distribution of the time since the most recent common ancestor of the current population and the contemporaneous population size at that 
time under different assumptions on the time since initiation.  In each case exact solutions are given for supercritical, critical and subcritical 
diffusions.  For supercritical diffusions asymptotic forms of distributions are also given in the limit of unbounded exponential growth.  
\end{abstract}

\begin{keyword}
Coalescent \sep Diffusion process \sep Branching process \sep Feller diffusion \sep Sampling distributions \sep Most recent common ancestor
\end{keyword}
\end{frontmatter}

%

\section{Introduction}
\label{sec:Introduction}

In a recent paper~\citep{BurdenGriffiths24} we characterised the stochastic properties of the coalescent tree of a Feller diffusion 
conditioned on an observed final population size.  Our approach was based on recognising the Feller diffusion as the limit 
of a continuous-time, finite-population birth-death (BD) process in which the birth and death rates become infinite, but their difference 
remains finite.  

The coalescent tree of a BD process stopped at finite time is generally referred to as a reconstructed process~\citep{Nee94}, 
and has been studied extensively. \cite{Aldous05} and \citet{Gernhard08} have analysed the reconstructed trees conditioned on a fixed number of 
leaves for critical and supercritical BD processes respectively.  
\citet{Stadler09} and \citet{Wiuf18} have extended the analysis to the reconstructed trees of sampled populations, while \citet{Stadler12} 
calculated distributions of branch lengths in reconstructed trees conditioned on a fixed number of leaves.  
\citet{Lambert13b} study general classes of BD processes by interpreting the reconstructed process as a coalescent point process 
in which a sequence of identically and independently distributed (i.i.d.) node heights is stopped at the first node height exceeding 
the time since initiation from a single ancestral founder.  This construction lends itself to studying Bernoulli sampled populations, but is not 
immediately applicable to the genealogy of a sample of fixed size~\citep{Lambert18}.  \citet{Lambert13c} consider the effect of tip removals 
to study contemporary species extinctions. 
\citet{o1995genealogy} and \cite{Harris20} have determined coalescent times of a BD process initiated from 
a single founder in the near critical limit, which can also be identified with a Feller diffusion. 
For a review of coalescence in Feller diffusion processes, see \citet{BurdenGriffiths25a}. 

In \citet[Section~5]{BurdenGriffiths24} we showed that the process generating that part of the coalescent tree of a Feller diffusion $\big(X(\tau)\big)_{\tau \in [0, t]}$ 
up to a time $t - 2\epsilon$ is identical to a process generating the reconstructed tree of a constant-rate BD process stopped at time $t - 2\epsilon$.  The derived birth and 
death rates of the equivalent BD process, $\hat\lambda(\epsilon)$ and $\hat\mu(\epsilon)$, become infinite as $\epsilon \to 0$.  By exploiting the known results for the 
distribution of coalescent times in a BD process conditioned on its final population size and taking carefully scaled limits the joint density of the infinite set of 
coalescent times for a Feller diffusion conditioned on $X(t) = x$ can be determined.  

The current paper takes a more direct approach and leads to a number of new results pertaining to the coalescent tree of a Feller diffusion.  
We begin with the solution to the forward Kolmogorov equation originally provided by \citet[Eq.~5.2]{feller1951diffusion}, 
which takes the form of a Poisson mixture of gamma distributions.  Within this solution, the `current' population $X(t)$ is interpreted as a sum of exponentially distributed 
families, each descended from a single ancestor at time zero. The number of founding family ancestors is Poisson distributed. Given this interpretation, and the Markovian 
nature of the diffusion, it is relatively straightforward to calculate distributional properties of the population at any intermediate time conditional on initial and 
final populations, and hence explore the coalescent structure.  

We begin in Section~\ref{sec:Background} with a brief summary of the relevant properties of the Feller diffusion.  A more detailed summary is given in 
\cite{BurdenGriffiths24}.  In Section~\ref{sec:singleAncestor} we concentrate on the properties of a Feller diffusion $\big(X(\tau)\big)_{\tau \in [0, t]}$ conditioned 
on a final population $X(t) = x > 0$ descended from a single founding ancestor at time zero.  A seminal result is Lemma~\ref{lemma:IntermediateAncestors} which 
gives the distribution of the number of ancestors at intermediate times within the interval $[0, t]$.  
In Section~\ref{sec:singleAncestor} the time $t$ since initiation of the process is a free parameter.  Subsequent 
sections consider particular cases.  Section~\ref{sec:TInfLimit} deals with the limiting distributions in which the time since initiation with a single ancestral founder 
becomes infinite.  This limit is pertinent to any gene whose ancestral lineage has its origins in the far distant past.  On the other hand, many studies of reconstructed 
processes in the context of finite BD processes assume an improper uniform prior on the time since initiation. Section~\ref{sec:UnifT1} is devoted to setting an 
improper uniform prior on the time since initiation of a Feller diffusion with a single founding ancestor.  By contrast, Section~\ref{sec:improperUnifX0} deals with 
setting an improper uniform prior on the population size $X(0)$ at a fixed time $t$ in the past without restricting to a single ancestral founder.  We demonstrate a 
mathematical equivalence of this scenario to the limit analysed in Section~\ref{sec:TInfLimit}.  

In earlier work, \citet{burden2016genetic} and \citet{BurdenSoewongsoso19} addressed the problem of characterising the joint likelihood of the time since the 
most recent common ancestor (MRCA) of a currently observed population $X(t) = x$ and the contemporaneous population size at the time of that MRCA.  
In Section~\ref{sec:MRCAgivenX} we revisit this problem using methods developed in the current paper.  The result is a more rigorous treatment leading to 
concise analytic formulae for each of the scenarios postulated in Sections~\ref{sec:singleAncestor}, \ref{sec:TInfLimit} and~\ref{sec:UnifT1}.  

A supercritical Feller diffusion which survives extinction has a tendency towards unbounded exponential growth.  Limiting distributions pertaining to coalescent 
times and the population size at the time of the MRCA under this scenario are determined in Section~\ref{sec:LargeXDistribs}.  Finally, 
conclusions are discussed in Section~\ref{sec:Conclusions}.  

%

\section{Background material and notation}
\label{sec:Background}

%

\subsection{Feller diffusion and solution given initial population}
\label{sec:FellerDiff}

The generator of the Feller diffusion $X(t)$ is 
\begin{equation}	\label{FellerGenerator}
\mathcal{L} = \tfrac{1}{2} x \frac{\partial^2}{\partial x^2}+ \alpha x \frac{\partial}{\partial x}, 
\end{equation}
where $\infty < \alpha < \infty$.  The diffusion is said to be subcritical, critical or supercritical according as $\alpha <0$, $= 0$ or $> 0$ respectively.  
For $x_0 \in \mathbb{R}_{\ge 0}$, define the probability density $f(x_0, x; t)$ by 
\[
f(x_0, x; t) dx = \mathbb{P}(X(t) \in(x, x + dx) \mid X(0) = x_0), \qquad x, t \ge 0.
\]
Solving the forward equation via a Laplace transform in $x$ \citetext{\citealp[Section~5.11]{Cox78}; \citealp[Section~3]{BurdenGriffiths23}} yields the solution as a Poisson-gamma mixture 
\begin{equation}	\label{Poisson_Gamma}
f(x_0, x; t) = \delta(x) e^{-x_0 \mu(t)} + \sum_{l = 1}^\infty f_l(x_0, x; t), \qquad x, t \ge 0, 
\end{equation}
where
\begin{equation}	\label{flDef}
f_l(x_0, x; t) := \frac{(x_0 \mu(t))^l}{l!}e^{-x_0 \mu(t)}  \frac{x^{l - 1} e^{-x/\beta(t)}}{\beta(t)^l (l - 1)!}, 
\end{equation}
\begin{equation}	\label{mubetaDef}
\mu(t; \alpha) = \frac{2\alpha e^{\alpha t}}{e^{\alpha t} - 1}, \quad \beta(t; \alpha) = \frac{e^{\alpha t} - 1}{2\alpha}, \qquad \alpha \ne 0, 
\end{equation} 
and $\delta(x)$ is the Dirac $\delta$-function \citep[Section~15]{Dirac58}.  We set $\mu(t; 0) = 2/t$ and $\beta(t; 0) = t/2$.  
The first term in Eq.~(\ref{Poisson_Gamma}) accounts for extinction of the entire population up to time $t$.  Some useful identities involving the functions defined in Eq.~(\ref{mubetaDef}) are listed in \ref{sec:MuBetaIdentities}.  In general, we will suppress the $\alpha$ dependence in in these two functions and simply write 
$\mu(t)$ and $\beta(t)$ unless necessary for context, for instance, in Eqs.~(\ref{mubetaIdentity3}) and (\ref{mubetaIdentity4}).     
Alternate forms of Eq.~(\ref{Poisson_Gamma}) are given in \ref{sec:Alternate_f}.  

%

\subsection{Diffusion limits of finite population processes}
\label{sec:DiffusionLimits}

The Feller diffusion arises as a near-critical limit of either a continuous-time BD process or a Bienaym\'{e}-Galton-Watson (BGW) process.  
The interpretation of Eq.~(\ref{Poisson_Gamma}) following from either of these limits is that the number of ancestors $l$ at time $0$ of the final population $X(t)$ 
is Poisson distributed with mean $x_0 \mu(t)$, and the size of each family is identically and independently exponentially distributed with mean $\beta(t)$.  
The probability of extinction of the entire population occurring in the interval $[0, t]$ is $e^{-x_0\mu(t)}$.  A detailed description of 
the diffusion limits and the justification for this interpretation can be found in \citet[Section~2]{BurdenGriffiths24}. A brief summary of the diffusion limits follows.  

Consider a linear BD process $(M_\epsilon(t))_{t \in \mathbb{R}_{\ge 0}}$ with birth rate $\hat\lambda(\epsilon)$ and death rate $\hat \mu(\epsilon)$ specified as functions of a parameter $\epsilon \in \mathbb{R}_{> 0}$ with the property 
\begin{equation}	\label{BDlimit}
\begin{split}
\hat\lambda(\epsilon) &= \tfrac{1}{2} \epsilon^{-1} + \tfrac{1}{2} \alpha + \mathcal{O}(\epsilon), \\
\hat \mu(\epsilon)        &= \tfrac{1}{2}\epsilon^{-1} - \tfrac{1}{2} \alpha + \mathcal{O}(\epsilon), 
\end{split}
\end{equation}
as $\epsilon \to 0$.  
If $M_\epsilon(t)$ is the number of particles alive at time $t$, set $X(t) = \epsilon M_\epsilon(t)$.  
Then the limiting generator of the process $X(t)$ as $\epsilon \to 0$ is the Feller diffusion generator Eq.~(\ref{FellerGenerator}).  

Alternatively, consider a BGW process $\big(Y(i)\big)_{i \in \mathbb{Z}_{\ge 0}}$ with parameters $\lambda$ equal to the 
expected number of offspring per parent, $\sigma^2$ equal to the variance of the number of offspring per parent, and a non-zero probability that a 
parent has no offspring.  Define a scaled time $t \ge 0$ and scaled population $X(t)$ by 
\begin{equation}	\label{BGWscaling}
\alpha t := i \log\lambda, \qquad \alpha X(t) := Y(i) \frac{\log\lambda}{\sigma^2}.  
\end{equation}
Then the limiting generator of the process as $\lambda \to 1$, $\sigma^2$  fixed and $i \to \infty$ in such a way that $\alpha t$ remains finite is the Feller 
generator Eq.~(\ref{FellerGenerator}).  
Simultaneously, the number of particles becomes infinite in the limit in such a way that $\alpha X(t)$ remains finite.  Note that the 
parameter $\alpha$ introduces one degree of redundancy as only the combinations $\alpha t$ and $\alpha X(t)$ are set by Eq.~(\ref{BGWscaling}).  

In \citet[Eq.~(6)]{BurdenGriffiths24}, $\alpha$ is determined by setting the initial condition $X(0) = 1$ for the entire initial population of 
$y_0 := Y(0)$ individuals.  In the diffusion limit $y_0 \to \infty$, and hence $\alpha = \lim_{y_0 \to \infty, \lambda \to 1} (y_0 \log\lambda)/\sigma^2$.  
In that paper the initial condition $X(0) = x_0 \in [0, 1]$ is reserved 
for the limiting process descended from a subset of the initial population of size $x_0 y_0$.

In a general setting, however, when specifying an initial condition 
$X(0) = x_0$ to the process defined by the generator Eq.~(\ref{FellerGenerator}), $x_0$ can take any nonnegative value.  
As an alternative to setting $X(0)$, without loss of generality the parameter $\alpha$ can always be set to $+1$ for a supercritical diffusion, 
$-1$ for a subcritical diffusion, or $0$ for a critical diffusion, and 
a boundary condition can be set by specifying a distribution over the range $[0, \infty)$ for $X(t)$ at a chosen value of $t$.  

In the current paper both $\alpha$ and $X(0) = x_0$ are treated as free parameters of the model.  Taking the limit $x_0 \to 0$ will prove to be a 
convenient way to condition on a Feller diffusion generating final population with a single initial ancestor.  
By leaving $\alpha$ 
unspecified, results for a critical diffusion can easily be obtained by taking the limit $\alpha \to 0$.  The results of all lemmas, theorems and corollaries 
except those in Section~\ref{sec:LargeXDistribs} are stated in terms of the functions $\mu(t; \alpha)$ and $\beta(t; \alpha)$ 
defined by Eq.~(\ref{mubetaDef}).  In each case corresponding results for a critical diffusion are obtained by substituting $2/t$ and $t/2$ respectively.  
Without loss of generality, numerical simulations for sub- and supercritical Feller diffusions are stated in terms of `dimensionless' times and 
population sizes $\alpha t$ and $\alpha x$ respectively.  

%

\subsection{Number of ancestors and coalescent times}
\label{sec:numberOfAncestors}

The existence of an underlying pre-limit finite-population process gives meaning to the concept of a 
coalescent tree for a Feller diffusion.  For the remainder of the paper we assume a Feller diffusion 
$\big(X(\tau)\big)_{\tau \in [0, t]}$ and explore the coalescent tree relative to a `current'  population $X(t)$.

Define $A_n(s)$ to be the number of ancestors existing at the time $t - s$ of a uniformly and independently chosen sample of $n$ individuals taken at time $t$.  
$A_\infty(s)$ refers to ancestors of the population $X(t)$, and $A_n(t)$ is the number of ancestors of the 
sample existing at time 0.  From the interpretation of the Poisson-Gamma solution Eq.~(\ref{Poisson_Gamma}) as a sum of families, each 
descended from a single founder at time 0, we have 
\begin{equation*}	
(A_\infty(t) \mid X(0) = x_0) \sim \text{Poisson}(x_0 \mu(t)), 
\end{equation*}
and for $l = 1, 2, \ldots$, 
\[
\mathbb{P}(A_\infty(t)  = l, X(t) \in(x, x + dx) \mid X(0) = x_0) = f_l(x_0, x; t) dx, 
\] 
where $f_l(x_0, x; t)$ is defined by Eq.~(\ref{flDef}).

\citet[Lemma~1]{BurdenGriffiths24} states that for a 
general binary exchangeable tree: 
\begin{lemma} \label{lemma:probAnGivenAinf}
Conditional on $A_\infty(s) = l > 0$, the probability that a sample of size $n > 0$ taken at time $t$ has $k \le l$ 
ancestors at time $t - s$ is independent of $X(0)$ and $X(t)$ and is equal to 
\begin{equation}	\label{probAnGivenAinf}
\mathbb{P}(A_n(s) = k \mid A_\infty(s) = l) = \binom{l}{k} \frac{n!}{l_{(n)}} \binom{n - 1}{k - 1}, \qquad 1 \le k \le l, 
\end{equation}
where $l_{(n)} = l(l + 1)\cdots(l + n - 1)$ is the rising factorial.  
\end{lemma}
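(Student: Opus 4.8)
The plan is to reduce the statement to a single combinatorial fact about how the sample is allocated among ancestral families, and then finish by counting. Conditional on $A_\infty(s) = l$, the current population at time $t$ is partitioned into $l$ families, the $i$-th consisting of the descendants of the $i$-th of the $l$ ancestors alive at time $t - s$. If the sample of size $n$ places $n_i$ individuals in family $i$, then $A_n(s) = k$ precisely when exactly $k$ of the counts $n_1, \ldots, n_l$ are nonzero. So I would first obtain the law of the allocation vector $(n_1, \ldots, n_l)$, with $n_i \ge 0$ and $\sum_i n_i = n$, and then sum over those allocations having exactly $k$ nonzero entries.

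The crux is the claim that this allocation is uniform over compositions,
\begin{equation*}
\mathbb{P}\big((n_1, \ldots, n_l) \mid A_\infty(s) = l\big) = \frac{n!}{l_{(n)}},
\end{equation*}
for every admissible $(n_1, \ldots, n_l)$, independently of $X(0)$ and $X(t)$. I would establish this by sampling the $n$ individuals sequentially and tracking their allocation. Exchangeability forces the first sampled individual to land in each of the $l$ families with equal probability $1/l$, while the binary branching structure makes the resulting allocation a P\'olya urn in which a family currently holding $n_i$ of the first $m$ sampled lineages receives the next sampled lineage with probability $(n_i + 1)/(m + l)$ --- that is, an urn started with one ball of each of $l$ colours. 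A direct telescoping of these transition probabilities gives $\prod_i n_i! / l_{(n)}$ for each ordered outcome; multiplying by the $n!/\prod_i n_i!$ orderings that realise the counts $(n_1, \ldots, n_l)$ yields the displayed uniform value. The independence of $X(0)$ and $X(t)$ is transparent in the diffusion picture of Section~\ref{sec:DiffusionLimits}: the $l$ families have independent exponential sizes, so their normalised proportions are symmetric Dirichlet and independent of the total $X(t)$, while $X(0)$ never enters the construction.

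With the allocation law in hand the remainder is bookkeeping. The number of allocation vectors with exactly $k$ nonzero entries is $\binom{l}{k}$, for choosing which $k$ of the $l$ families are represented, times $\binom{n - 1}{k - 1}$, the number of compositions of $n$ into $k$ positive parts. Multiplying this count by the common probability $n!/l_{(n)}$ reproduces Eq.~(\ref{probAnGivenAinf}). As a consistency check, the Vandermonde identity gives $\sum_{k = 1}^{n} \binom{l}{k}\binom{n - 1}{k - 1} = \binom{l + n - 1}{n} = l_{(n)}/n!$, so the probabilities sum to one.

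I expect the single genuine obstacle to be justifying the P\'olya-urn transition rule $(n_i + 1)/(m + l)$ from the bare hypothesis of a binary exchangeable tree, rather than from the specific exponential-family structure of the Feller diffusion. This step must combine leaf-exchangeability with the binary topology --- for instance through the consistency of the exchangeable tree under adjoining one further sampled leaf, or equivalently through the ranked-tree-shape probabilities of the underlying Yule/coalescent topology. Once that rule is in place, every remaining step is elementary counting.
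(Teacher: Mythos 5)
Your argument is correct, but note first that this paper contains no proof of the statement to compare against: Lemma~\ref{lemma:probAnGivenAinf} is quoted verbatim from \citet[Lemma~1]{BurdenGriffiths24}, so the only in-paper justification is the citation. Judged on its own, your reduction is the right one: $A_n(s)=k$ iff exactly $k$ of the allocation counts $(n_1,\dots,n_l)$ are positive, the uniform law $n!/l_{(n)}$ over weak compositions is exactly the Dirichlet$(1,\dots,1)$--multinomial probability, and the count $\binom{l}{k}\binom{n-1}{k-1}$ of weak compositions with $k$ positive parts, together with your Vandermonde check, closes the computation. The one obstacle you flag --- deriving the P\'olya-urn transition $(n_i+1)/(m+l)$ from binary leaf-exchangeability alone --- is a real issue only if one insists on the lemma in the full generality of ``a general binary exchangeable tree''; for the use made of it in this paper you do not need it, because the Poisson--gamma interpretation of Eq.~(\ref{Poisson_Gamma}) gives the $l$ family sizes as i.i.d.\ exponentials, hence symmetric Dirichlet proportions independent of the total $X(t)$ (and $X(0)$ never enters), and the uniform-composition law then follows from the explicit integral $n!\int \prod_i p_i^{n_i}/n_i!\,$ against the flat Dirichlet density, with no urn argument required. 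I would therefore restructure the write-up to make the Dirichlet--multinomial computation the primary proof and relegate the urn description to a remark, rather than leaving the load-bearing step as the one you yourself identify as unjustified.
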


A generic example of a coalescent tree is shown in Fig.~\ref{fig:CoalescentTree}.  
In a coalescent tree with $n$ leaves, define $T_k^{(n)}$, $k = 1, \ldots, n$ to be the time of the $k$-ancestor to $(k - 1)$-ancestor coalescent event measured back
from the current time $t$.  $T_1^{(n)}$ is the root of the tree.  
For convenience define $T_{n + 1}^{(n)} := 0$, and $A_n(s) := 0$ for $s > T_1^{(n)}$.  In diffusion models, coalescent trees of the entire 
population ``come down from infinity'' in general.  For these trees, we suppress the superscript in $T_1^{(\infty)}$ and write the coalescent times as the sequence 
of coalescent times as $T_1 >  T_2 >  \cdots$.  The cumulative distribution function of the $k$th coalescent time is 
\begin{equation}	\label{TkFromAn}
F_{T_k}^{(n)}(s) := \mathbb{P}(T_k \le s) = \sum_{j = 0}^{k - 1} \mathbb{P}(A_n(s) = j), \qquad k = 1, \ldots, n.
\end{equation}

\begin{figure}[t]
 \centering
 \includegraphics[width=0.75\linewidth]{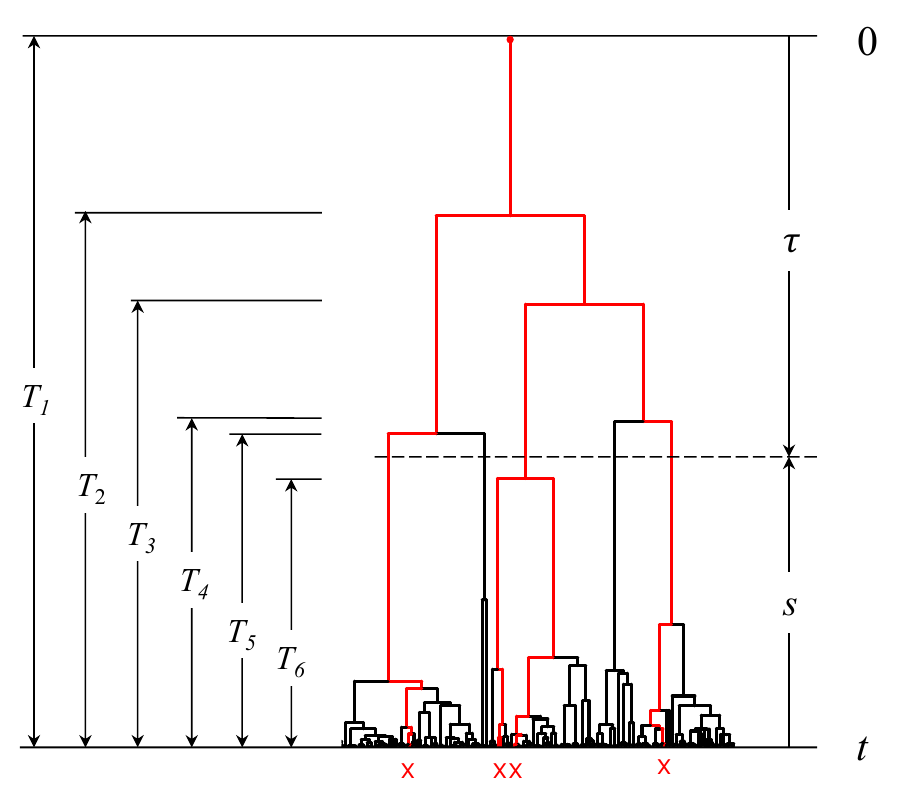}
 \caption{Coalescent tree of a Feller diffusion $\big(X(\tau)\big)_{\tau \in [0, t]}$ conditioned on a single ancestral founder of the existing current population $X(t) > 0$.  
 Coalescent times for the population are $T_1 > T_2 > \ldots$  Highlighted in red is the coalescent tree of a random sample of size $n = 4$.  In this example, 
 the number of ancestors of the sample at the time $s$ back from the present is $A_4(s) = 3$, of the population is $A_\infty(s) = 5$, and the sample coalescent times 
 are $T_2^{(4)} = T_2$, $T_3^{(4)} = T_3$ and $T_4^{(4)} = T_6$.}
 \label{fig:CoalescentTree}
\end{figure}

For the remainder of this paper we assume a diffusion conditioned on non-extinction at time $t$ and (except Section~\ref{sec:improperUnifX0}) 
a single ancestral founder of the existing current population at time zero. That is, we condition on $T_1 = t$, or equivalently, $(A_\infty(t) = 1, X(t) > 0)$.   
The equivalence follows from the Markovian property that the evolution of the process over the interval $[0, t]$ is determined solely 
by the state at time zero.  Both conditions state that at the instant the process is stopped, the final population exists and had exactly one 
ancestor at a time $t$ in the past.  

%

\section{Populations descended from a single ancestor}
\label{sec:singleAncestor}

Distributions related to populations descended from a single founding ancestor can be found 
by making use of appropriate limits as $x_0 \to 0$ to extract the $l = 1$ contribution from Eq.~(\ref{Poisson_Gamma}).  
This procedure is equivalent to setting $M_\epsilon(0) = 1$ in the pre-limit BD process or $Y(0) = 1$ in the pre-limit BGW process as 
described in Section~\ref{sec:DiffusionLimits}.  Similar limits are established practice for determining the age of a single mutant allele 
in diffusion Wright-Fisher models~\cite[Eqs.~(13) and(14)]{Kimura_1973}.  In a review paper \citet[Section~10]{BurdenGriffiths25a} 
consider the coalescent for a Feller diffusion initiated with a single founder infinitely far in the past by taking  $x_0 \to 0$.

The following lemma reproduces the findings of \citet[Theorem~2.1]{o1995genealogy}.
\begin{lemma} \label{lemma:OConnell}
For a Feller diffusion $\big(X(\tau)\big)_{\tau \in [0, t]}$ descended from a single founder at time zero and conditioned on non-extinction, the density of 
the final population $X(t)$ is
\begin{equation}	\label{expFromSingleAncestor}
f_{X(t)\mid T_1 = t}(x) = \frac{1}{\beta(t)} e^{-x/\beta(t)}, \quad x > 0.  
\end{equation}
\end{lemma}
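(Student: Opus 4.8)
The plan is to exploit the explicit Poisson--gamma decomposition Eq.~(\ref{Poisson_Gamma}) together with the probabilistic reading of its summands recorded in Section~\ref{sec:numberOfAncestors}. Since the excerpt establishes that conditioning on $T_1 = t$ is equivalent to conditioning on the joint event $(A_\infty(t) = 1,\ X(t) > 0)$, the target density is just the $l = 1$ term of the mixture, renormalised. Concretely, I would start from the identity
\[
\mathbb{P}\big(A_\infty(t) = 1,\ X(t) \in (x, x + dx) \mid X(0) = x_0\big) = f_1(x_0, x; t)\, dx,
\]
and read off from Eq.~(\ref{flDef}), using $0! = 1$ and $x^{0} = 1$, that
\[
f_1(x_0, x; t) = x_0 \mu(t)\, e^{-x_0 \mu(t)}\, \frac{1}{\beta(t)}\, e^{-x/\beta(t)}.
\]

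The second step is normalisation. I would integrate $f_1$ over $x \in (0, \infty)$ to obtain the marginal weight $\mathbb{P}(A_\infty(t) = 1 \mid X(0) = x_0)$; because the $x$-dependent factor is already a normalised exponential density, this integral equals $x_0 \mu(t)\, e^{-x_0\mu(t)}$, consistent with $A_\infty(t) \sim \text{Poisson}(x_0\mu(t))$. Dividing $f_1$ by this weight gives
\[
f_{X(t)\mid T_1 = t}(x) = \frac{f_1(x_0, x; t)}{\mathbb{P}(A_\infty(t) = 1 \mid X(0) = x_0)} = \frac{1}{\beta(t)}\, e^{-x/\beta(t)},
\]
which is exactly Eq.~(\ref{expFromSingleAncestor}).

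What makes the statement clean is that the entire $x_0$-dependent prefactor $x_0 \mu(t)\, e^{-x_0 \mu(t)}$ cancels between numerator and denominator, so no limit in $x_0$ is in fact required: the result holds for every $x_0 > 0$ and is manifestly independent of it. I would nonetheless note that the same answer arises through the $x_0 \to 0$ route advertised at the start of Section~\ref{sec:singleAncestor}. Conditioning merely on non-extinction yields the density $\big(\sum_{l \ge 1} f_l\big)\big/(1 - e^{-x_0\mu(t)})$, and as $x_0 \to 0$ the numerator is dominated by its leading $O(x_0)$ term $f_1$, while the denominator is asymptotic to $x_0\mu(t)$, since each $f_l$ with $l \ge 2$ is $O(x_0^{\,l})$; taking the limit again recovers Eq.~(\ref{expFromSingleAncestor}). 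There is no substantive analytic obstacle here; the only point needing a little care on the limiting route is justifying the term-by-term asymptotics of the series in $x_0$ well enough to interchange limit and normalisation, which is routine given the geometric-type smallness of the higher terms. The critical case $\alpha = 0$ needs no separate treatment, being obtained by the stated substitution $\beta(t;0) = t/2$.
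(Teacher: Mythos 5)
Your proposal is correct, and in fact it contains the paper's own argument as your secondary route. The paper proves the lemma by conditioning only on non-extinction, writing the conditional density as $\big(f_1(x_0,x;t)+\mathcal{O}(x_0^2)\big)\big/\big(1-e^{-x_0\mu(t)}\big)$ and letting $x_0\to 0$ so that the denominator behaves like $x_0\mu(t)$ and the $l\ge 2$ terms of the Poisson--gamma mixture are suppressed; this is precisely the limiting argument you sketch at the end. Your primary route is a mild but genuine variant: you condition directly on the event $A_\infty(t)=1$ at fixed $x_0>0$, using the identification $\mathbb{P}(A_\infty(t)=1,\,X(t)\in(x,x+dx)\mid X(0)=x_0)=f_1(x_0,x;t)\,dx$ from Section~\ref{sec:numberOfAncestors}, and observe that the prefactor $x_0\mu(t)e^{-x_0\mu(t)}$ cancels exactly against the Poisson weight $\mathbb{P}(A_\infty(t)=1\mid X(0)=x_0)$, so no limit is needed and the answer is manifestly independent of $x_0$. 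What this buys is a cleaner explanation of \emph{why} the result is $x_0$-free (given one founding family, its size is exponential with mean $\beta(t)$ regardless of the initial mass), and it sidesteps the interchange-of-limit bookkeeping you flag; what the paper's route buys is that it conditions only on the observable event $X(t)>0$ and lets the $x_0\to 0$ limit select the single-ancestor contribution, which is the template reused in the proofs of Lemma~\ref{lemma:IntermediateAncestors} and Theorem~\ref{theorem:pastPopn}. Both arguments are valid within the paper's framework, since the paper itself declares $T_1=t$ equivalent to $(A_\infty(t)=1,\,X(t)>0)$.
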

\begin{proof}
For $x > 0$, 
\begin{eqnarray*}
\lefteqn{\mathbb{P}(X(t) \in (x, x + dx) \mid T_1 = t)} \nonumber \\
	& = & \mathbb{P}(X(t) \in (x, x + dx) \mid A_\infty(t) = 1, X(t) > 0) \nonumber \\
	& = & \lim_{x_0 \to 0} \mathbb{P}(X(t) \in (x, x + dx) \mid X(0) = x_0, X(t) > 0) \nonumber \\
	& = & \lim_{x_0 \to 0} \frac{\mathbb{P}(X(t) \in (x, x + dx) \mid X(0) = x_0)}{\mathbb{P}(X(t) > 0 \mid X(0) = x_0)} \nonumber \\
	& = & \lim_{x_0 \to 0} \frac{(f_1(x_0, x; t) + \mathcal{O}(x_0^2)) dx}{1 - e^{-x_0 \mu(t)}} \nonumber \\
	& = & \frac{1}{\beta(t)} e^{-x/\beta(t)} dx.  
\end{eqnarray*}
\end{proof}

%
%
\subsection{Distributions of ancestors of the population at an intermediate time and of coalescent times conditioned on a current population}
\label{sec:IntermediateAncestors}

The following lemma calculates the distribution of the number of ancestors $A_\infty(s)$ at an intermediate time $\tau = t - s \in [0, t]$ of a current 
population $X(t) = x > 0$ descended from a single initial ancestor. 
\begin{lemma} \label{lemma:IntermediateAncestors}
Consider a Feller diffusion $\big(X(\tau)\big)_{\tau \in [0, t]}$ with parameter $\alpha \in \mathbb{R}$,  
descended from a single founder at time zero and conditioned on a current population $X(t) = x$. 
Then for $x > 0$ and $s \in [0, t]$, 
\begin{eqnarray}	\label{ancestorsGivenT1}
\lefteqn{\mathbb{P}(A_\infty(s) = k \mid T_1 = t, X(t) = x)} \nonumber \\
 	& = & \frac{(x/\beta(s; |\alpha|) - x/\beta(t; |\alpha|))^{k - 1}}{(k - 1)!} e^{-(x/\beta(s; |\alpha|) - x/\beta(t; |\alpha|))}, \quad k = 1, 2, \ldots
\end{eqnarray}
\end{lemma}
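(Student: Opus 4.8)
The plan is to exploit the Markov property at the intermediate time $\tau = t - s$ together with the family decomposition underlying Eq.~(\ref{Poisson_Gamma}), and then to extract the single-founder case by the same $x_0 \to 0$ limit used in Lemma~\ref{lemma:OConnell}. The number of ancestors at time $\tau$ of the current population is precisely the number of families into which $X(t)$ decomposes when the process is viewed as starting afresh from $X(\tau)$; conditional on $X(\tau) = y$, the joint law of this family count equalling $k$ and $X(t) \in (x, x+dx)$ is $f_k(y, x; s)\,dx$ by Eq.~(\ref{flDef}). First I would therefore write, for the unconditioned diffusion started at $X(0) = x_0$,
\begin{equation*}
\mathbb{P}(A_\infty(s) = k, X(t) \in (x, x+dx) \mid X(0) = x_0) = \left(\int_0^\infty f(x_0, y; \tau)\, f_k(y, x; s)\, dy\right) dx,
\end{equation*}
the integral over the intermediate population $y > 0$ being justified by the Markov property (the Dirac term in $f(x_0,y;\tau)$ drops out since $f_k(0,x;s) = 0$ for $k \ge 1$).

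Next I would condition on a single ancestral founder, i.e.\ on $T_1 = t$, by dividing by $\mathbb{P}(X(t) > 0 \mid X(0) = x_0) = 1 - e^{-x_0\mu(t)}$ and letting $x_0 \to 0$, exactly as in the proof of Lemma~\ref{lemma:OConnell}. In this limit only the $l = 1$ term of $f(x_0, y; \tau)$ survives at leading order, with $f_1(x_0, y; \tau) \sim x_0 \mu(\tau)\, \beta(\tau)^{-1} e^{-y/\beta(\tau)}$, while the denominator behaves as $x_0 \mu(t)$. The factors of $x_0$ cancel, leaving a gamma integral $\int_0^\infty y^k e^{-y(1/\beta(\tau) + \mu(s))}\,dy = k!\,(1/\beta(\tau) + \mu(s))^{-(k+1)}$. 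Dividing the resulting joint density by the marginal density $\beta(t)^{-1} e^{-x/\beta(t)}$ of Lemma~\ref{lemma:OConnell} then yields the conditional probability as a prefactor (independent of $x$) times $\frac{x^{k-1}}{(k-1)!}\, e^{-x(1/\beta(s) - 1/\beta(t))}$.

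The main obstacle is the algebraic collapse of that prefactor. Using the addition identities for $\mu$ and $\beta$ recorded in \ref{sec:MuBetaIdentities} (in particular $\beta(t) = \beta(\tau) + e^{\alpha\tau}\beta(s)$ and $\mu(t)\beta(t) = e^{\alpha t}$), I expect to verify that $1/\beta(\tau) + \mu(s) = \tfrac{2\alpha(e^{\alpha t}-1)}{(e^{\alpha\tau}-1)(e^{\alpha s}-1)}$ and then that the entire $k$-dependent prefactor reduces to $(1/\beta(s) - 1/\beta(t))^{k-1}$. Matching this base against the exponential rate $1/\beta(s) - 1/\beta(t)$ already present in the density identifies the law as a unit-shifted Poisson and gives Eq.~(\ref{ancestorsGivenT1}), initially with $\alpha$ in place of $|\alpha|$.

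Finally, to recover the stated $|\alpha|$ form I would record the reflection identity $1/\beta(u; -\alpha) = 1/\beta(u; \alpha) + 2\alpha$, immediate from Eq.~(\ref{mubetaDef}), which shows that the combination $1/\beta(s) - 1/\beta(t)$ is invariant under $\alpha \mapsto -\alpha$ and hence may be evaluated at $|\alpha|$ without changing its value. A useful consistency check is that the Poisson mean $x(1/\beta(s) - 1/\beta(t))$ vanishes at $s = t$, forcing $k = 1$ in accordance with the single founder at time zero, and diverges as $s \to 0$, consistent with the population ``coming down from infinity'' at the present time.
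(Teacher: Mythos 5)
Your proposal is correct and follows essentially the same route as the paper's proof: a Markov decomposition at the intermediate time $t-s$ giving the convolution $\int_0^\infty f(x_0,y;t-s)f_k(y,x;s)\,dy$, extraction of the single-founder term via the $x_0\to 0$ limit, evaluation of the gamma integral with rate $U(t,s)=\beta(t-s)^{-1}+\mu(s)$, collapse of the prefactor via the identities of \ref{sec:MuBetaIdentities} (your computed form of $1/\beta(t-s)+\mu(s)$ is exactly Eqs.~(\ref{UIdentity1})--(\ref{UIdentity2})), and the sign-reflection symmetry Eq.~(\ref{mubetaIdentity4}) to justify the $|\alpha|$ form. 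The only cosmetic difference is that you condition in two stages (first on non-extinction, then on $X(t)=x$ via Lemma~\ref{lemma:OConnell}) rather than dividing directly by $f(x_0,x;t)$, which amounts to the same cancellation of $x_0$.
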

\begin{proof}
The required probability is 
\begin{eqnarray*}
\lefteqn{\mathbb{P}(A_\infty(s) = k \mid A_\infty(t) = 1, X(t) = x > 0)} \nonumber \\
	& = & \lim_{x_0 \to 0} \frac{\mathbb{P}(A_\infty(s) = k, X(t) \in (x, x + dx) \mid X(0) = x_0)}
							{\mathbb{P}(X(t) \in (x, x + dx) \mid X(0) = x_0))}  \nonumber \\
\end{eqnarray*}
The pre-limit numerator is, up to a factor $dx$, 
\begin{eqnarray*}
	& & \int_0^\infty f(x_0, z; t - s) f_k(z, x; s) dz \nonumber \\ 
	& = & \int_0^\infty \left(x_0  \frac{\mu(t - s)}{\beta(t - s)} e^{-z/\beta(t - s)} + \mathcal{O}(x_0^2)\right) \times 
										\frac{(z\mu(s))^k}{k!} e^{-z\mu(s)}\frac{x^{k - 1}}{(k - 1)! \beta(s)^k} e^{-x/\beta(s)} dz \nonumber \\
	& = & \frac{x_0 \mu(t - s)}{\beta(t - s)} \frac{x^{k - 1}}{(k - 1)! \beta(s)^k} e^{-x/\beta(s)} \int_0^\infty \frac{(z\mu(s))^k}{k!} e^{-zU(t, s)} dz + \mathcal{O}(x_0^2) \nonumber \\
	& = & \frac{x_0 \mu(t - s)}{\beta(t - s)} \frac{x^{k - 1}}{(k - 1)! \beta(s)^k} e^{-x/\beta(s)} \frac{\mu(s)^k}{U(t, s)^{k + 1}} + \mathcal{O}(x_0^2) \\
	& = & \frac{x_0}{(k - 1)!} \frac{\mu(s)}{\beta(s)}\frac{1}{U(t, s)^2}\frac{\mu(t - s)}{\beta(t - s)} 
										\left(x\frac{\mu(s)}{\beta(s)U(t, s)}\right)^{k - 1} e^{-x/\beta(s)} + \mathcal{O}(x_0^2) \nonumber\\
	& = & \frac{x_0}{(k - 1)!} \frac{\mu(t)}{\beta(t)}\left(\frac{x}{\beta(s)} - \frac{x}{\beta(t)}\right)^{k - 1}  e^{-x/\beta(s)} + \mathcal{O}(x_0^2), 
\end{eqnarray*}
where $U(t, s)$ is defined by Eq.~(\ref{UtsDefn}) and use has been made of Eqs.~(\ref{UIdentity1}) and (\ref{UIdentity2}).  The pre-limit denominator is, up to a factor $dx$,   
\[
f(x_0, x; t) = x_0  \frac{\mu(t)}{\beta(t)} e^{-x/\beta(t)} + \mathcal{O}(x_0^2).  
\]
The result follows, taking into account the symmetry with respect to a change of sign in $\alpha$ in Eq.~(\ref{mubetaIdentity4}).  
\end{proof}

Eq.~(\ref{ancestorsGivenT1}) is a shifted Poisson distribution, which, in the following theorem, enables an interpretation of the 
coalescent times $T_2, T_3, \ldots$ as points in a non-homogeneous Poisson process with a known joint distribution.  

\begin{theorem}	\label{theorem:jointCoalescentDistrib}
Consider a Feller diffusion $\big(X(\tau)\big)_{\tau \in [0, t]}$ with parameter $\alpha \in \mathbb{R}$, 
descended from a single founder at time zero and conditioned on a current population $X(t) = x$. 
The marginal density of the population coalescent time $T_k$ is 
\begin{eqnarray}	\label{densityTkGivenx}
\lefteqn{f_{T_k \mid T_1 = t, X(t) = x}(s) = \frac{(x/\beta(s; |\alpha|) - x/\beta(t; |\alpha|))^{k - 2}}{(k - 2)!} \frac{x \mu(s; |\alpha|)}{2\beta(s; |\alpha|)} e^{-(x/\beta(s; |\alpha|) - x/\beta(t; |\alpha|))},} \qquad\qquad\qquad\nonumber \\
					& & \qquad\qquad\qquad\qquad\qquad\qquad\qquad t > s > 0,\; k = 2, 3, \ldots
\end{eqnarray}
and the joint density of the $k - 1$ population coalescent times $T_2, \ldots, T_k$ is 
\begin{eqnarray}	\label{jointTkGivenx}
f_{T_2, \ldots, T_k \mid T_1 = t, X(t) = x}(s_2, \ldots, s_k) & = & \left(\prod_{j = 2}^k \frac{x \mu(s_j; |\alpha|)}{2\beta(s_j; |\alpha|)} \right) 
									e^{-(x/\beta(s_k; |\alpha|) - x/\beta(t; |\alpha|))}, \nonumber \\
	&  & \qquad\qquad t > s_2 > s_3 > \ldots > s_k > 0. 
\end{eqnarray}
\end{theorem}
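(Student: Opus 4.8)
The plan is to read both densities directly off the shifted-Poisson law supplied by Lemma~\ref{lemma:IntermediateAncestors}. Throughout I write $\Lambda(s) := x/\beta(s;|\alpha|) - x/\beta(t;|\alpha|)$, so that Eq.~(\ref{ancestorsGivenT1}) states that $A_\infty(s)-1$, conditioned on $(T_1=t,\, X(t)=x)$, is $\mathrm{Poisson}(\Lambda(s))$. Note that $\Lambda$ is smooth and strictly decreasing on $(0,t)$ with $\Lambda(t)=0$ and $\Lambda(0^+)=\infty$, and that the identities for $\mu$ and $\beta$ in \ref{sec:MuBetaIdentities} yield $d\Lambda/ds = -\,x\mu(s;|\alpha|)/(2\beta(s;|\alpha|))$. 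These two facts are the only analytic inputs I will need.

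For the marginal density Eq.~(\ref{densityTkGivenx}) I would first assemble the distribution function from Eq.~(\ref{TkFromAn}). Since $A_\infty(s)\ge 1$, the $j=0$ term drops and $F_{T_k}(s)=\mathbb{P}(A_\infty(s)\le k-1)=\sum_{m=0}^{k-2}\Lambda(s)^m e^{-\Lambda(s)}/m!$, i.e.\ a Poisson distribution function. Differentiating in $s$ via the chain rule, the partial sum telescopes under $d/d\Lambda$ to the single surviving term $-\Lambda^{k-2}e^{-\Lambda}/(k-2)!$, and multiplying by $d\Lambda/ds$ and substituting the identity above turns the two minus signs into a positive density, reproducing Eq.~(\ref{densityTkGivenx}) exactly.

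For the joint density I would exploit the hint that the coalescent times are the points of a non-homogeneous Poisson process. Read forward in real time, the count $A_\infty-1$ increases by unit jumps at the instants $T_2>T_3>\cdots$, and I claim that, conditioned on $(T_1=t,\,X(t)=x)$, this counting process is a non-homogeneous Poisson process with mean function $\Lambda$. Its one-dimensional marginals are already the required $\mathrm{Poisson}(\Lambda(s))$ by Lemma~\ref{lemma:IntermediateAncestors}; the remaining content is independent increments, which I would obtain from the Markov property of the diffusion together with its branching (additivity) property: given the ancestral lineages present at an intermediate time layer, each founds an independent Feller subdiffusion, so the numbers of coalescences accruing in disjoint layers are conditionally independent, and matching the Poisson marginals forces the state-independent jump law characteristic of a Poisson process. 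Reparametrising by the strictly monotone $\Lambda$ converts this into a unit-rate Poisson process with arrival times $\Lambda(T_2)<\Lambda(T_3)<\cdots$, whose first $k-1$ arrivals $v_2<\cdots<v_k$ have the standard joint density $e^{-v_k}$. Transforming back to $s_2>\cdots>s_k$ contributes the Jacobian $\prod_{j=2}^k |d\Lambda/ds_j| = \prod_{j=2}^k x\mu(s_j;|\alpha|)/(2\beta(s_j;|\alpha|))$ together with the factor $e^{-\Lambda(s_k)}$, which is precisely Eq.~(\ref{jointTkGivenx}); integrating out $s_3,\ldots,s_k$ recovers Eq.~(\ref{densityTkGivenx}) as an internal check.

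The main obstacle is the independent-increments step, since Lemma~\ref{lemma:IntermediateAncestors} delivers only the one-dimensional marginals and the intermediate population sizes are correlated with the counts, so the Poisson-process structure cannot simply be assumed. If the branching-plus-Markov argument proves awkward to make fully rigorous, I would instead compute the joint density directly, paralleling the proof of Lemma~\ref{lemma:IntermediateAncestors}: express the pre-limit joint probability of the prescribed ancestral counts as an iterated convolution of diffusion kernels across the successive layers $[t-s_j,\,t-s_{j-1}]$, each layer carrying exactly one coalescence, integrate out the intermediate population sizes with the same Gamma integrals used there, and take $x_0\to 0$. This route is more laborious but self-contained, and the factorisation over layers that emerges is exactly the independent-increments property, so it simultaneously establishes Eq.~(\ref{jointTkGivenx}) and justifies the Poisson-process interpretation.
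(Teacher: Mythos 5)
Your proposal is correct and follows essentially the same route as the paper: identify $A_\infty(s)-1$, conditioned on $(T_1=t,\,X(t)=x)$, as the count of a non-homogeneous Poisson process in reversed time with mean function $\Lambda(s)=x/\beta(s;|\alpha|)-x/\beta(t;|\alpha|)$, and read both densities off the standard event-time formulas (your derivation of the marginal by differentiating the Poisson distribution function assembled from Eq.~(\ref{TkFromAn}) is an equivalent reorganisation of the paper's use of the $r$th-event-time density). The one point worth noting is that you are more scrupulous than the paper itself about the independent-increments step: the paper's proof simply asserts the identification with a Poisson process on the strength of the one-dimensional marginals of Lemma~\ref{lemma:IntermediateAncestors}, whereas you correctly flag this as the genuine content of the joint-density claim and supply a workable fallback --- the layered convolution computation in the style of the proof of Lemma~\ref{lemma:IntermediateAncestors}, taking $x_0\to 0$ after integrating out intermediate population sizes --- which is what a fully rigorous argument would require.
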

\begin{proof}
Recall that for non-homogeneous Poisson process $\big(N(\tau)\big)_{\tau > 0}$ with instantaneous rate $\lambda(\tau)$, the number of events 
$N(\tau)$ occurring in the interval $(0, \tau)$ is Poisson distributed with mean $\Lambda(\tau) := \int_0^\tau \lambda(\xi) d\xi$, 
that the marginal distribution of $U_r$ is
\[
f_{U_r}(\tau) = \frac{\Lambda(\tau)^{r - 1}}{(r - 1)!} \lambda(\tau)e^{\Lambda(\tau)}, 
\]
and that the joint density of the first $r$ event times $U_1, U_2, \ldots, U_r$ is 
\[
f_{U_1, \ldots, U_r}(\tau_1, \ldots, \tau_r) = \left(\prod_{j = 1}^r \lambda(\tau_j) \right) e^{-\Lambda(\tau_r)}, \qquad \tau_1 < \tau_2 < \ldots < \tau_r. 
\]
Consistent with Eq.~(\ref{ancestorsGivenT1}), identify $\tau \in (0, t)$ with $t - s$ and $N(\tau)$ with $A_\infty(s) - 1$ and 
\[
\Lambda(\tau) = \frac{x}{\beta(t - \tau)} - \frac{x}{\beta(t)}.  
\]
Then it is clear from Fig.~\ref{fig:CoalescentTree} that $U_j = t - T_{j + 1}$, $j = 2, 3, \ldots$ and hence 
\[
f_{T_k \mid T_1 = t, X(t) = x}(s) = \frac{(x/\beta(s) - x/\beta(t))^{k - 2}}{(k - 2)!} 
\left(\frac{d}{ds}\left\{\frac{-x}{\beta(s)}\right\} \right) e^{-(x/\beta(s) - x/\beta(t))},
\]
and 
\begin{eqnarray*}
f_{T_2, \ldots, T_k \mid T_1 = t, X(t) = x}(s_2, \ldots, s_k) & = & \left(\prod_{j = 2}^k \frac{d}{ds_j} \left\{\frac{-x}{\beta(s_j)}\right\} \right) e^{-(x/\beta(s_k) - x/\beta(t))} \nonumber \\
	&  & \qquad\qquad\qquad\qquad s_2 > s_3 > \ldots > s_r > 0. 
\end{eqnarray*}
Eqs.~(\ref{densityTkGivenx}) and (\ref{jointTkGivenx}) then follow from Eq.~(\ref{mubetaIdentity1}) and the symmetry with respect to a change of 
sign in $\alpha$ in Eq.~(\ref{mubetaIdentity4}).  
\end{proof}

%
%
\subsection{Distributions of ancestors of the population without conditioning on $X(t)$}
\label{sec:IntermediateAncestors}

The distributions of the number of ancestors at $t - s$ and coalescent times {\em without} conditioning on $X(t) = x$ but conditioning on non-extinction 
at time $t$ can be calculated in a similar fashion.  Alternatively one can start directly from Eqs.~(\ref{ancestorsGivenT1}), (\ref{expFromSingleAncestor}) 
and (\ref{jointTkGivenx}) as follows.  
\begin{theorem}	\label{theorem:withoutConditioningX}
For a Feller diffusion $\big(X(\tau)\big)_{\tau \in [0, t]}$ descended from a single founder at time zero and conditioned on non-extinction at time $t$, 
the probability that there are $k$ ancestors at time $t - s$ for $s \in [0, t]$ is
\[
 \mathbb{P}(A_\infty(s) = k \mid T_1 = t, X(t) > 0) = \frac{\beta(s)}{\beta(t)} \left(1 - \frac{\beta(s)}{\beta(t)}\right)^{k - 1}, \qquad k = 1, 2, \ldots;  
\]
the marginal density of the population coalescent time $T_k$ is  
\[
f_{T_k \mid T_1 = t, X(t) >0}(s) = \tfrac{1}{2}(k - 1)\frac{\mu(s)\beta(s)}{\beta(t)} \left(1 - \frac{\beta(s)}{\beta(t)}\right)^{k - 2}, 
					\quad s > 0,\; k = 2, 3, \ldots ;
\]
and the joint density of the $k - 1$ population coalescent times $T_2, \ldots, T_k$ is 
\[
f_{T_2, \ldots, T_k \mid T_1 = t, X(t) > 0}(s_2, \ldots, s_k) = (k - 1)! \frac{\beta(s_k)^k}{\beta(t)} \prod_{j = 2}^k \frac{\mu(s_j)}{2\beta(s_j)} 
														\qquad s_2 > s_3 > \ldots > s_k > 0.   
\]
\end{theorem}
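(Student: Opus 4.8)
The plan is to obtain each of the three unconditional laws by averaging the corresponding $X(t)=x$\nobreakdash-conditional result over the distribution of the final population size. Since conditioning on $T_1=t$ is equivalent to conditioning on $(A_\infty(t)=1,\,X(t)>0)$, and since Lemma~\ref{lemma:OConnell} tells us that under this conditioning $X(t)$ has the exponential density $\beta(t)^{-1}e^{-x/\beta(t)}$ supported on $x>0$, marginalising over the final population size gives, for each target quantity $G$ (a probability or a coalescent-time density),
\[
G(\,\cdot\mid T_1=t,\,X(t)>0)=\int_0^\infty G(\,\cdot\mid T_1=t,\,X(t)=x)\,\frac{1}{\beta(t)}\,e^{-x/\beta(t)}\,dx .
\]
First I would substitute into this formula the inner conditional quantities supplied by Lemma~\ref{lemma:IntermediateAncestors} (Eq.~(\ref{ancestorsGivenT1})) and Theorem~\ref{theorem:jointCoalescentDistrib} (Eqs.~(\ref{densityTkGivenx}) and~(\ref{jointTkGivenx})), treating the three statements of the theorem as three instances of the same integral.

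The crucial simplification, common to all three cases, is in the exponent. Each conditional quantity carries the factor $e^{-(x/\beta(s)-x/\beta(t))}$ (with $s_k$ in place of $s$ for the joint density), and multiplying this by the $e^{-x/\beta(t)}$ of the mixing density cancels the $\beta(t)$ term, leaving the single clean factor $e^{-x/\beta(s)}$. After this cancellation every integrand is of the form $x^{m}e^{-x/\beta(s)}$ up to constants, with $m=k-1$ in all three cases: the power $x^{k-1}$ arises from $(x/\beta(s)-x/\beta(t))^{k-1}$ in the ancestor distribution, from $(x/\beta(s)-x/\beta(t))^{k-2}$ together with the extra factor $x$ in Eq.~(\ref{densityTkGivenx}), and from the $k-1$ factors of $x$ in the product in Eq.~(\ref{jointTkGivenx}). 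Each integral therefore reduces to the gamma integral $\int_0^\infty x^{k-1}e^{-x/\beta(s)}\,dx=(k-1)!\,\beta(s)^{k}$ (with $\beta(s_k)$ for the joint density), which is the only analytic input needed.

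What remains is algebraic tidying. Writing $1/\beta(s)-1/\beta(t)=(\beta(t)-\beta(s))/(\beta(s)\beta(t))$, the prefactors collect into powers of $(\beta(t)-\beta(s))/\beta(t)=1-\beta(s)/\beta(t)$; since $\beta$ is increasing this ratio lies in $(0,1)$, producing the proper geometric form of the ancestor distribution and the $(1-\beta(s)/\beta(t))^{k-2}$ factor of the marginal coalescent-time density. The joint density is simplest, since the product $\prod_{j=2}^k \mu(s_j)/(2\beta(s_j))$ passes through the integral untouched and only the factor $(k-1)!\,\beta(s_k)^{k}/\beta(t)$ is generated. The one point requiring care, which I regard as the main (though mild) obstacle, is notational consistency: Eqs.~(\ref{ancestorsGivenT1})--(\ref{jointTkGivenx}) are written with $|\alpha|$ whereas Lemma~\ref{lemma:OConnell} is written with $\alpha$. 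These are reconciled by observing that the combination $1/\beta(s)-1/\beta(t)$ is invariant under $\alpha\to-\alpha$ (this invariance is precisely what licensed the $|\alpha|$ form in Lemma~\ref{lemma:IntermediateAncestors}), so the integrals may be evaluated consistently with $\beta(\cdot;\alpha)$ and the answers reported with the $\alpha$-dependence suppressed. Beyond this bookkeeping I anticipate no genuine difficulty, as the three results are straightforward gamma-integral moments of a single exponential mixing density.
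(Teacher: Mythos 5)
Your proposal is correct and follows essentially the same route as the paper's own proof: both average the $X(t)=x$-conditional results of Lemma~\ref{lemma:IntermediateAncestors} and Theorem~\ref{theorem:jointCoalescentDistrib} against the exponential density of Lemma~\ref{lemma:OConnell}, exploit the cancellation of the $e^{x/\beta(t)}$ factor in the exponent, and reduce each case to the gamma integral $\int_0^\infty x^{k-1}e^{-x/\beta(s)}\,dx=(k-1)!\,\beta(s)^k$. Your remark on reconciling the $|\alpha|$ versus $\alpha$ notation via the sign-invariance of $1/\beta(s)-1/\beta(t)$ is a small point the paper passes over silently, but otherwise the two arguments coincide.
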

\begin{proof}
From Lemmas~\ref{lemma:OConnell} and \ref{lemma:IntermediateAncestors}, 
\begin{eqnarray*}
\lefteqn{\mathbb{P}(A_\infty(s) = k \mid T_1 = t, X(t) > 0)} \nonumber \\
	& = & \int_{x = 0+}^\infty \mathbb{P}(A_\infty(s) = k, X(t) \in (x, x + dx) \mid T_1 = t)  \nonumber \\
	& = & \int_{x = 0+}^\infty \mathbb{P}(A_\infty(s) = k, \mid T_1 = t, X(t) = x))  \nonumber \\
	& & \qquad\qquad\qquad\qquad  \times\, \mathbb{P}(X(t) \in (x, x + dx) \mid T_1 = t)  \nonumber \\
	& = & \int_0^\infty \frac{(x/\beta(s) - x/\beta(t))^{k - 1}}{(k - 1)!} e^{-(x/\beta(s) - x/\beta(t))} \frac{1}{\beta(t)} e^{-x/\beta(t)} dx   \nonumber \\
	& = & \frac{\beta(s)}{\beta(t)} \left(1 - \frac{\beta(s)}{\beta(t)}\right)^{k - 1}, \qquad k = 1, 2, \ldots.  
\end{eqnarray*}
Similarly, from Theorem~\ref{theorem:jointCoalescentDistrib} 
\begin{eqnarray*}
\lefteqn{f_{T_k \mid T_1 = t, X(t) >0}(s)} \nonumber \\
	& = & \int_{0+}^\infty f_{T_k \mid T_1 = t, X(t) = x}(s) \times \mathbb{P}(X(t) \in (x, x + dx) \mid T_1 = t)  \nonumber \\
	& = & \int_0^\infty \frac{(x/\beta(s) - x/\beta(t))^{k - 2}}{(k - 2)!} \frac{x \mu(s)}{2\beta(s)} e^{-(x/\beta(s) - x/\beta(t))} 
					\times\, \frac{1}{\beta(t)} e^{-x/\beta(t)} dx   \nonumber \\ 
	& = & \frac{(1/\beta(s) - 1/\beta(t))^{k - 2}}{(k - 2)!} \frac{\mu(s)}{2\beta(s)\beta(t)} \int_0^\infty x^{k - 1}  e^{-x/\beta(s)} dx   \nonumber \\
	& = & \tfrac{1}{2}(k - 1)\frac{\mu(s)\beta(s)}{\beta(t)} \left(1 - \frac{\beta(s)}{\beta(t)}\right)^{k - 2},  
\end{eqnarray*}
and 
\begin{eqnarray*}
\lefteqn{f_{T_2, \ldots, T_k \mid T_1 = t, X(t) > 0}(s_2, \ldots, s_k) } \nonumber \\
	& = & \int_0^\infty  \left(\prod_{j = 2}^k \frac{x \mu(s_j)}{2\beta(s_j)} \right) e^{-(x/\beta(s_k) - x/\beta(t))} \times \frac{1}{\beta(t)} e^{-x/\beta(t)} dx   \nonumber \\
	& = & \frac{1}{\beta(t)} \left(\prod_{j = 2}^k \frac{\mu(s_j)}{2\beta(s_j)} \right) \int_0^\infty x^{k - 1} e^{-x/\beta(s_k)} dx   \nonumber \\ 
	& = & (k - 1)! \frac{\beta(s_k)^k}{\beta(t)} \prod_{j = 2}^k \frac{\mu(s_j)}{2\beta(s_j)} \qquad s_2 > s_3 > \ldots > s_k > 0.   
\end{eqnarray*}
\end{proof}

%
%
\subsection{Sampling distributions of $A_k(s)$ and $T_k$}
\label{sec:SamplingDistributions}

The distribution of the number of ancestors of a finite i.i.d.\ sample of a current population $X(t) = x$ at an intermediate time $t - s$ can be 
derived with the help of Lemma~\ref{lemma:probAnGivenAinf}.  A critical step in the proof relies on the fact that 
Eq.~(\ref{probAnGivenAinf}) is independent of $X(0)$ and $X(t)$.  
\begin{theorem}	\label{theorem:AnSample}
Consider a Feller diffusion $\big(X(\tau)\big)_{\tau \in [0, t]}$ with parameter $\alpha \in \mathbb{R}$, 
descended from a single founder at time zero and conditioned on a current population $X(t) = x$. 
Then for $x > 0$ and $s \in [0, t]$, the number $A_n(s)$ of ancestors of a finite i.i.d.\ sample of the current population is distributed as 
\begin{eqnarray*}	
\lefteqn{\mathbb{P}(A_n(s) = k \mid T_1 = t, X(t) = x)} \nonumber \\ 
	& = & \frac{n_{[k]}}{n_{(k)} (k - 1)!}(x\eta(s, t))^{k - 1} e^{-x\eta(s, t)}~_1F_1\left(k + 1, k + n, x\eta(s, t)\right),  \nonumber \\
	& & \qquad\qquad\qquad\qquad\qquad\qquad k = 1, \ldots, n,
\end{eqnarray*}
where $ \eta(s, t) = 1/\beta(s; |\alpha|)-1/\beta(t; |\alpha|)$, $n_{(k)} = n(n + 1)\cdots(n + k - 1)$ is the rising factorial, $n_{[k]} = n(n - 1)\cdots(n - k +1)$ is the falling factorial, 
and $_1F_1$ is Kummer's confluent hypergeometric function~\citep[Eq.~(13.1.2)]{Abramowitz:1965sf} 
\[
~_1F_1(a; b; z) \equiv M(a,b,z) :=\sum_{j=0}^\infty\frac{a_{(j)}}{b_{(j)}}\frac{z^j}{j!}.
\]
\end{theorem}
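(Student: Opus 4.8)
The plan is to obtain the sample distribution by conditioning on the number of \emph{population} ancestors $A_\infty(s)$ and applying the law of total probability. The crucial feature is that Lemma~\ref{lemma:probAnGivenAinf} gives $\mathbb{P}(A_n(s)=k\mid A_\infty(s)=l)$ independently of $X(0)$ and $X(t)$, so it multiplies cleanly against the conditional law of $A_\infty(s)$ supplied by Lemma~\ref{lemma:IntermediateAncestors}. Writing $\lambda:=x\eta(s,t)$ for brevity, and noting that Eq.~(\ref{probAnGivenAinf}) vanishes for $l<k$, I would start from
\[
\mathbb{P}(A_n(s)=k\mid T_1=t,X(t)=x)=\sum_{l=k}^\infty \mathbb{P}(A_n(s)=k\mid A_\infty(s)=l)\,\mathbb{P}(A_\infty(s)=l\mid T_1=t,X(t)=x),
\]
substituting Eq.~(\ref{probAnGivenAinf}) for the first factor and the shifted-Poisson law Eq.~(\ref{ancestorsGivenT1}), with mean $\lambda$, for the second.

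Next I would reindex with $l=k+m$, $m=0,1,2,\ldots$, use $l_{(n)}=(l+n-1)!/(l-1)!$ to cancel the $(l-1)!$ arising from the Poisson term, and expand the two binomial coefficients into factorials. All $m$-independent factors pull outside, leaving a prefactor $n!\,(n-1)!\big/\!\big(k!\,(k-1)!\,(n-k)!\big)$ times $\lambda^{k-1}e^{-\lambda}$ times the power series $\sum_{m\ge 0}c_m\lambda^m$ with $c_m=(k+m)!\big/\!\big(m!\,(k+m+n-1)!\big)$. The key step is then to recognise this series as Kummer's function: the ratio of consecutive coefficients is
\[
\frac{c_{m+1}}{c_m}=\frac{k+1+m}{(k+n+m)(m+1)},
\]
which is precisely the term ratio of ${}_1F_1(k+1;k+n;\lambda)$. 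Matching the $m=0$ term $c_0=k!/(k+n-1)!$ against the unit leading term of $_1F_1$ identifies $\sum_{m\ge 0}c_m\lambda^m=\big(k!/(k+n-1)!\big)\,{}_1F_1(k+1;k+n;\lambda)$.

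Finally I would assemble the prefactor: the cancellation of $k!$ leaves $n!\,(n-1)!\big/\!\big((k-1)!\,(n-k)!\,(k+n-1)!\big)$, which rewrites as $n_{[k]}\big/\!\big(n_{(k)}(k-1)!\big)$ on substituting $n_{[k]}=n!/(n-k)!$ and $n_{(k)}=(n+k-1)!/(n-1)!$, giving the claimed formula. I expect the only genuine difficulty to be the pattern-matching that converts the reindexed series into $_1F_1$; everything else is routine bookkeeping with rising and falling factorials. As a sanity check, at $s=t$ one has $\eta(t,t)=0$, hence $\lambda=0$, and the expression collapses to $\mathbb{P}(A_n(t)=1)=n_{[1]}/n_{(1)}=1$, correctly recovering the single founding ancestor at time zero.
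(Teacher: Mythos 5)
Your proposal is correct and follows essentially the same route as the paper's proof: decompose over $A_\infty(s)$ via the law of total probability, combine Lemma~\ref{lemma:probAnGivenAinf} with the shifted-Poisson law of Lemma~\ref{lemma:IntermediateAncestors}, and resum the resulting series into $_1F_1(k+1;k+n;x\eta(s,t))$. Your identification of the Kummer series via the ratio of consecutive coefficients is just a different bookkeeping of the same resummation the paper performs directly with rising factorials, and your prefactor algebra and the $s=t$ sanity check are both correct.
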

\begin{proof}
From Lemmas~\ref{lemma:probAnGivenAinf} and~\ref{lemma:IntermediateAncestors},
\begin{eqnarray*}	
\lefteqn{\mathbb{P}(A_n(s) = k \mid T_1 = t, X(t) = x)} \nonumber \\ 
	& = & \lim_{x_0 \to 0} \mathbb{P}(A_n(s) = k \mid X(0) = x_0, X(t) = x) \nonumber \\
	& = & \sum_{l = k}^\infty \lim_{x_0 \to 0} \mathbb{P}(A_n(s) = k \mid A_\infty(s) = l, X(0) = x_0, X(t) = x) \times \nonumber \\
	&    & \qquad\qquad\qquad \mathbb{P}(A_\infty(s) = l \mid X(0) = x_0, X(t) = x) \nonumber \\ 
	& = & \sum_{l = k}^\infty\mathbb{P}(A_n(s) = k \mid A_\infty(s) = l) \times \mathbb{P}(A_\infty(s) = l \mid T_1 = t, X(t) = x) \nonumber \\ 	
	& = & \sum_{l = k}^\infty \binom{l}{k} \frac{n!}{l_{(n)}} \binom{n - 1}{k - 1} \times \frac{1}{(l - 1)!} (x\eta(s, t))^{l - 1} e^{-x\eta(s, t)} \nonumber \\ 
	& = & \frac{n!}{k!} \binom{n - 1}{k - 1} \sum_{l = k}^\infty \frac{l!}{(l - k)!} \frac{1}{(l + n - 1)!} (x\eta(s, t))^{l - 1} e^{-x\eta(s, t)} \nonumber \\ 
	& = & \frac{n_{[k]}}{n_{(k)} (k - 1)!} (x\eta(s, t))^{k - 1} 
								\sum_{j = 0}^\infty \frac{(k + 1)_{(j)}}{(k + n)_{(j)}} \frac{1}{j!} (x\eta(s, t))^j e^{-x\eta(s, t)} \nonumber \\ 
	& = & \frac{n_{[k]}}{n_{(k)} (k - 1)!} (x\eta(s, t))^{k - 1} e^{-x\eta(s, t)} ~_1F_1\left(k + 1, k + n, x\eta(s, t)\right),  \quad k = 1, \ldots, n,  \nonumber
\end{eqnarray*}
\end{proof}

In the following theorem the distribution of the number of ancestors of a finite i.i.d.\ sample of size $n$ {\em without} conditioning on $X(t) = x$ 
is found in a similar fashion.  
\begin{theorem}
Consider a Feller diffusion $\big(X(\tau)\big)_{\tau \in [0, t]}$ descended from a single founder at time zero. 
For $s \in [0, t]$, the number $A_n(s)$ of ancestors of a finite i.i.d.\ sample of size $n > 0$ of the current population is distributed as 
\begin{equation}	\label{sampledAncestors}
\mathbb{P}(A_n(s) = k \mid T_1 = t) 
			= \sum_{l = k}^\infty  \binom{l}{k} \frac{n!}{l_{(n)}} \binom{n - 1}{k - 1}  \frac{\beta(s)}{\beta(t)} \left(1 - \frac{\beta(s)}{\beta(t)}\right)^{l - 1}. 
\end{equation}
The cumulative distribution function of the time $T_2^{(n)}$ back to the MRCA of the sample is 
\begin{equation*}	
\mathbb{P}(T_2^{(n)} < s\mid T_1 = t) = 
				\frac{\beta(s)}{\beta(t)}~_2F_1\left(2, 1; n + 1; 1 - \frac{\beta(s)}{\beta(t)}\right), \quad 0 \le s \le t, 
\end{equation*}	
where 
\[
_2F_1(a, b; c; z) = \sum_{j=0}^\infty\frac{a_{(j)}b_{(j)}}{c_{(j)}}\frac{z^j}{j!}
\]
is the ordinary hypergeometric function \citep[Eq.~(15.1.5)]{Abramowitz:1965sf}.  An alternative form for distribution function of the MRCA of 
a sample of size $n = 2$ is 
\begin{equation}	\label{MRCAnEquals2}	
\mathbb{P}(T_2^{(2)} < s\mid T_1 = t) = 
			2\frac{\beta(s)}{\beta(s) - \beta(t)} + 2\frac{\beta(s)\beta(t)}{(\beta(s) - \beta(t))^2} \log\frac{\beta(t)}{\beta(s)}, \quad 0 \le s \le t.  
\end{equation}	
\end{theorem}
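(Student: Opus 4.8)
The plan is to build all three displays on top of the two results already established: the conditional law $\mathbb{P}(A_n(s) = k \mid A_\infty(s) = l)$ of Lemma~\ref{lemma:probAnGivenAinf}, which crucially does not depend on $X(0)$ or $X(t)$, and the geometric-type law $\mathbb{P}(A_\infty(s) = l \mid T_1 = t, X(t) > 0) = (\beta(s)/\beta(t))(1 - \beta(s)/\beta(t))^{l-1}$ of Theorem~\ref{theorem:withoutConditioningX}. For the first display I would simply sum over the population ancestor count $l$ by the law of total probability,
\[
\mathbb{P}(A_n(s) = k \mid T_1 = t) = \sum_{l=k}^\infty \mathbb{P}(A_n(s) = k \mid A_\infty(s) = l)\,\mathbb{P}(A_\infty(s) = l \mid T_1 = t, X(t) > 0),
\]
and substitute the two ingredients directly. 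The independence of the first factor from $X(t)$ is exactly what licenses dropping the conditioning on the final population, so Eq.~(\ref{sampledAncestors}) drops out with no further work.

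For the sample MRCA time $T_2^{(n)}$, the key observation is that the sample has a single ancestor at depth $t - s$ precisely when its genealogy has already coalesced by that depth, so that $\{T_2^{(n)} < s\} = \{A_n(s) = 1\}$ and hence $\mathbb{P}(T_2^{(n)} < s \mid T_1 = t) = \mathbb{P}(A_n(s) = 1 \mid T_1 = t)$. Setting $k = 1$ in Eq.~(\ref{sampledAncestors}) and writing $w := 1 - \beta(s)/\beta(t)$ collapses the binomial factors to $\binom{l}{1} = l$ and $\binom{n-1}{0} = 1$, leaving $(1-w)\sum_{l=1}^\infty (l\,n!/l_{(n)})\,w^{l-1}$. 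The task is then to recognise this series as $_2F_1(2, 1; n+1; w)$. I would do this by converting the rising factorial $l_{(n)} = (l+n-1)!/(l-1)!$ into ordinary factorials, so that the coefficient of $w^{l-1}$ becomes $n!\,l!/(l+n-1)!$; reindexing $j = l - 1$ and comparing with the Gauss series $_2F_1(2,1;n+1;w) = \sum_j \tfrac{2_{(j)}\,1_{(j)}}{(n+1)_{(j)}}\tfrac{w^j}{j!}$, using $2_{(j)} = (j+1)!$, $1_{(j)} = j!$ and $(n+1)_{(j)} = (n+j)!/n!$, shows the coefficients agree term by term. Restoring the prefactor $1 - w = \beta(s)/\beta(t)$ gives the stated $_2F_1$ form.

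The closed form for $n = 2$ then follows by evaluating the hypergeometric function at $n = 2$, where the general coefficient simplifies dramatically: $_2F_1(2, 1; 3; w) = \sum_{j \ge 0} \tfrac{(j+1)!\,2!}{(j+2)!} w^j = 2\sum_{j \ge 0} \tfrac{w^j}{j+2}$. I would sum this in closed form via $\sum_{m \ge 1} w^m/m = -\log(1-w)$, obtaining $_2F_1(2,1;3;w) = -2w^{-2}\log(1-w) - 2w^{-1}$. Substituting $w = 1 - \beta(s)/\beta(t)$, so that $1 - w = \beta(s)/\beta(t)$ and $\log(1-w) = -\log(\beta(t)/\beta(s))$, and multiplying by the prefactor $\beta(s)/\beta(t)$, reproduces Eq.~(\ref{MRCAnEquals2}) after clearing the $\beta(t)$ factors from the denominators.

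I expect the main obstacle to be the middle step: verifying the identification of the infinite series with $_2F_1(2, 1; n+1; w)$ requires careful bookkeeping of rising and falling factorials together with a clean reindexing, and an error there would propagate into both remaining displays. The two bracketing steps — the total-probability decomposition and the $n = 2$ logarithmic summation — are routine by comparison, the only minor subtleties being to confirm convergence of the series for $0 \le s < t$ (equivalently $0 \le w < 1$) and to check the harmless vanishing of the probability as $s \to 0$.
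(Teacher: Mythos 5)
Your proposal is correct and follows essentially the same route as the paper: the law of total probability over $A_\infty(s)=l$ combining Lemma~\ref{lemma:probAnGivenAinf} with the geometric law from Theorem~\ref{theorem:withoutConditioningX}, the identification $\{T_2^{(n)}<s\}=\{A_n(s)=1\}$, the term-by-term matching of the series with ${}_2F_1(2,1;n+1;w)$ after writing $l\,n!/l_{(n)}=n!\,l!/(l+n-1)!$, and the logarithmic summation for $n=2$. All the factorial bookkeeping and the final substitution $w=1-\beta(s)/\beta(t)$ check out against the stated formulae.
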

\begin{proof}
Eq.~(\ref{sampledAncestors}) follows directly from Lemma~\ref{lemma:probAnGivenAinf} and Theorem~\ref{theorem:withoutConditioningX}.  
For the MRCA, setting $k = 1$ in Eq.~(\ref{sampledAncestors}), 
\begin{eqnarray*}	
\mathbb{P}(T_2^{(n)} < s\mid T_1 = t) & = & \mathbb{P}(A_n(s, t) = 1 \mid  T_1 = t) \nonumber \\
	& = & n! \frac{\beta(s)}{\beta(t)} \sum_{l = 1}^\infty \frac{l!}{(l + n - 1)!} \left(1 - \frac{\beta(s)}{\beta(t)}\right)^{l - 1} \nonumber \\
	& = & \frac{\beta(s)}{\beta(t)}~_2F_1\left(2, 1; n + 1; 1 - \frac{\beta(s)}{\beta(t)}\right), \quad 0 \le s \le t, 
\end{eqnarray*}	
Alternatively, setting $n = 2$ in the second line above, 
\begin{eqnarray*}	
\mathbb{P}(T_2^{(2)} < s\mid T_1 = t) & = & 2 \frac{\beta(s)}{\beta(t)} \sum_{l = 1}^\infty \frac{1}{(l + 1)} \left(1 - \frac{\beta(s)}{\beta(t)}\right)^{l - 1} \nonumber \\
	& = & 2 \frac{\beta(s)}{\beta(t)}\left(\frac{\beta(t) - \beta(s)}{\beta(t)}\right)^{-2} 
				\left\{\sum_{j = 1}^\infty \frac{1}{j}   \left(1 - \frac{\beta(s)}{\beta(t)}\right)^j - \frac{\beta(t) - \beta(s)}{\beta(t)} \right\}  \nonumber \\
	& = & 2\frac{\beta(s)}{\beta(s) - \beta(t)} + 2\frac{\beta(s)\beta(t)}{(\beta(s) - \beta(t))^2} \log\frac{\beta(t)}{\beta(s)}, \quad 0 \le s \le t.  
\end{eqnarray*}	
\end{proof}
Eq.~(\ref{MRCAnEquals2}) was found previously using different methods by \citet[Theorem~2.3 with $x = 1$]{o1995genealogy} and \citet[p1369]{Harris20}.  
\ref{sec:connectionOC_HJR} explains in detail the equivalence of the near-critical limit used in these two papers and the diffusion limit employed in the current paper.  

%
%
\subsection{Density of the population size at an intermediate time}

The following theorem pertains to the population existing at an intermediate time $t - s$, including lineages which subsequently become extinct 
before the current time~$t$.  
\begin{theorem}	\label{theorem:pastPopn}
Consider a Feller diffusion $\big(X(\tau)\big)_{\tau \in [0, t]}$ with parameter $\alpha \in \mathbb{R}$, 
descended from a single founder at time zero and conditioned on a current population $X(t) = x > 0$. 
The population size density at an intermediate time $t - s$ for $0< s < t$ is 
\[
f_{X(t - s) \mid T_1=t, X(t) = x} (z) 
	=  \frac{\mu(t - s; |\alpha|)}{\beta(t - s; |\alpha|)}\frac{\beta(t; |\alpha|)}{\mu(t; |\alpha|)} e^{-z/\beta(t - s; |\alpha|)} e^{x/\beta(t; |\alpha|)} f(z, x; s, |\alpha|), \qquad z >0,  
\]
where $f(z, x; s)$ is defined by Eq.~(\ref{Poisson_Gamma}). 
\end{theorem}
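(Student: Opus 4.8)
The plan is to obtain the conditional density by the same $x_0 \to 0$ device used in Lemmas~\ref{lemma:OConnell} and~\ref{lemma:IntermediateAncestors}, realising the conditioning on a single founder ($T_1 = t$) as a limit of conditioning on a vanishing initial population $X(0) = x_0$. Concretely, I would write
\[
f_{X(t-s)\mid T_1=t, X(t)=x}(z) = \lim_{x_0\to 0} f_{X(t-s)\mid X(0)=x_0, X(t)=x}(z),
\]
and justify the limit exactly as before: given $X(t)=x>0$, the posterior weight $f_l(x_0,x;t)/\sum_{l'} f_{l'}(x_0,x;t)$ that $A_\infty(t)=l$ collapses onto $l=1$ as $x_0\to 0$, since $f_l(x_0,x;t) = \mathcal{O}(x_0^l)$ by Eq.~(\ref{flDef}). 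Hence the limiting conditional density is precisely the density conditioned on a single ancestral founder.

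Next I would use the Markov property (Chapman--Kolmogorov) to factor the finite-$x_0$ conditional density. For $x_0>0$,
\[
f_{X(t-s)\mid X(0)=x_0, X(t)=x}(z) = \frac{f(x_0,z;t-s)\, f(z,x;s)}{f(x_0,x;t)},
\]
where the denominator is $\int_0^\infty f(x_0,z';t-s)\, f(z',x;s)\,dz' = f(x_0,x;t)$ for $x>0$. The transition density $f(z,x;s)$ over the remaining interval is left intact; it is exactly the factor appearing in the statement.

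The final step is the leading-order expansion in $x_0$. For $z,x>0$ the Dirac contribution in Eq.~(\ref{Poisson_Gamma}) drops out, and Eq.~(\ref{flDef}) gives $f(x_0,z;t-s) = x_0\,\mu(t-s)\beta(t-s)^{-1} e^{-z/\beta(t-s)} + \mathcal{O}(x_0^2)$ and $f(x_0,x;t) = x_0\,\mu(t)\beta(t)^{-1} e^{-x/\beta(t)} + \mathcal{O}(x_0^2)$. Substituting these and cancelling the common factor $x_0$ yields
\[
\frac{\mu(t-s)}{\beta(t-s)}\frac{\beta(t)}{\mu(t)}\, e^{-z/\beta(t-s)}\, e^{x/\beta(t)}\, f(z,x;s),
\]
which is the claimed formula with $\alpha$ in place of $|\alpha|$. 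To pass to the $|\alpha|$ form I would invoke the sign symmetry recorded in Eq.~(\ref{mubetaIdentity4}): the prefactors $\mu(t-s)/\beta(t-s)$ and $\beta(t)/\mu(t)$ are each invariant under $\alpha\to-\alpha$, and, on combining $e^{-z/\beta(t-s)}$ and $e^{x/\beta(t)}$ with the exponentials carried inside $f(z,x;s)$, the $z$-exponent collapses to $-z\bigl(\mu(s)+1/\beta(t-s)\bigr)$ and the $x$-exponent to $-x\bigl(1/\beta(s)-1/\beta(t)\bigr)$, both $\alpha\to-\alpha$ invariant (here one uses $\mu(\tau)-1/\beta(\tau)=2\alpha$); since the remaining series depends only on the invariant ratio $\mu(s)/\beta(s)$, the whole expression may be rendered with $|\alpha|$.

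The main obstacle is conceptual rather than computational: one must be confident that the $x_0\to 0$ limit of the finite-$x_0$ conditional density genuinely reproduces the density conditioned on $T_1=t$ (equivalently $A_\infty(t)=1$), and that the limit may be taken inside the ratio. This is the same issue resolved in the earlier lemmas, and the $\mathcal{O}(x_0^l)$ ordering of the terms $f_l$ makes the collapse onto the single-founder configuration transparent. The residual bookkeeping needed to pass from $\alpha$ to $|\alpha|$ is routine once the identity $\mu(\tau)-1/\beta(\tau)=2\alpha$ is in hand.
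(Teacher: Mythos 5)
Your proposal is correct and follows essentially the same route as the paper's own proof: the $x_0\to 0$ limit to realise the single-founder conditioning, the Markov/Chapman--Kolmogorov factorisation $f(x_0,z;t-s)f(z,x;s)/f(x_0,x;t)$, cancellation of the leading $x_0$ factors, and the sign-symmetry argument via Eqs.~(\ref{mubetaIdentity4}) and the invariance of $U(t,s)$. Your closing remarks on the symmetry step merely spell out what the paper leaves as ``straightforward to check.''
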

\begin{proof}
\begin{eqnarray*}
\lefteqn{\mathbb{P}(X(t - s) \in (z, z + dz) \mid T_1=t, X(t) = x)} \nonumber \\
	& = & \lim_{x_0 \to 0} \mathbb{P}(X(t - s) \in (z, z + dz) \mid X(0) = x_0, X(t) = x) \nonumber \\
	& = & \lim_{x_0 \to 0} \frac{\mathbb{P}(X(t - s) \in (z, z + dz), X(t) \in (x, x + dx) \mid X(0) = x_0)}
									{\mathbb{P}(X(t) \in (x, x + dx)\mid X(0) = x_0)} \nonumber \\
	& = & \lim_{x_0 \to 0} \frac{f(x_0, z; t - s)dz\, f(z, x; s)dx}{f(x_0, x; t)dx} \nonumber \\  
	& = & \lim_{x_0 \to 0} \frac{(x_0 \mu(t - s) \beta(t - s)^{-1} e^{-z/\beta(t - s)} + \mathcal{O}(x_0^2)) f(z, x; s)}
							{x_0 \mu(t) \beta(t)^{-1} e^{-x/\beta(t)} + \mathcal{O}(x_0^2)} dz  \nonumber \\
	& = & \frac{\mu(t - s)}{\beta(t - s)}\frac{\beta(t)}{\mu(t)} e^{-z/\beta(t - s)} e^{x/\beta(t)} f(z, x; s) dz. 
\end{eqnarray*}
It is straightforward to check from Eqs.~(\ref{Poisson_Gamma}), (\ref{mubetaIdentity4}) and (\ref{UIdentity3}) that the final line is symmetric with respect 
to a change of sign in $\alpha$, and the result follows.  
\end{proof}

%
%
\section{$T_1 \to \infty$ limit}	\label{sec:TInfLimit}

Since the ancestral line of any gene stretches well into the past, it is useful to consider the $t \to \infty$ limit of distributions 
populations descended from a single ancestral founder at a time $T_1 = t$ in the past.  
\begin{theorem}	\label{theorem:infT1Distribs}
Consider a Feller diffusion $\big(X(\tau)\big)_{\tau \in [0, t]}$ with parameter $\alpha \in \mathbb{R}$ descended from a single founder at time zero and conditioned on a current population $X(t) = x > 0$. Define a random variable $M_n(s; x)$ by the weak limit 
\[	
M_n(s; x) := \lim_{t \to \infty} A_n(s)|(T_1 = t, X(t) = x).  
\]
For the number of ancestors of the entire population 
\begin{equation}	\label{numberOfAncestors}
\mathbb{P}(M_\infty(s; x) = k) = \frac{1}{(k - 1)!} \left(\frac{x}{\beta(s; |\alpha|)}\right)^{k - 1} e^{-x/\beta(s; |\alpha|)}, \qquad k = 1, 2, \ldots,  
\end{equation}
and for the number of ancestors of an i.i.d.\ sample
\begin{eqnarray*}	\label{nAncestorsSample}
\lefteqn{\mathbb{P}(M_n(s; x) = k)} \nonumber \\
	& = & \frac{n_{[k]}}{n_{(k)} (k - 1)!}\left(\frac{x}{\beta(s; |\alpha|)}\right)^{k - 1} e^{-x/\beta(s; |\alpha|)}~_1F_1\left(k + 1, k + n, \frac{x}{\beta(s; |\alpha|)}\right), \nonumber \\
	& & 			\qquad\qquad\qquad\qquad\qquad\qquad\qquad\qquad\qquad\qquad\qquad\qquad	k = 1, \ldots, n.
\end{eqnarray*}
Densities corresponding to the weak limit as $t \to \infty$ of the population coalescent times $T_k$ are 
\begin{eqnarray}	\label{fTkInfT1}
f_{T_k}^{{\rm inf}\,T_1}(s; x) & := & \lim_{t \to \infty}f_{T_k \mid T_1 = t, X(t) = x}(s) \nonumber \\
	& = & \frac{1}{(k - 2)!}\left(\frac{x}{\beta(s; |\alpha|)}\right)^{k - 2} \frac{x \mu(s, |\alpha|)}{2\beta(s, |\alpha|)} e^{-(x/\beta(s, |\alpha|))}, 
					\quad s > 0,\; k = 2, 3, \ldots \nonumber \\
\end{eqnarray}
and
\begin{eqnarray}	\label{joint_fTkInfT1}
f_{T_2, \ldots, T_k}^{{\rm inf}\,T_1}(s_2, \ldots, s_k; x) & := & \lim_{t \to \infty}f_{T_2, \ldots, T_k \mid T_1 = t, X(t) = x}(s_2, \ldots, s_k) \nonumber \\ 
	& = & \left(\prod_{j = 2}^k \frac{x \mu(s_j, |\alpha|)}{2\beta(s_j, |\alpha|)} \right) e^{-(x/\beta(s_k, |\alpha|))},  \qquad s_2 > s_3 > \ldots > s_k > 0.  \nonumber \\
\end{eqnarray}
The density corresponding to the weak limit as $t \to \infty$ of the population $X(t - s)$ at a time $s$ in the past is 
\begin{equation}	\label{pastDensity}
f_{X(T_1 - s)}^{{\rm inf}\,T_1}(z; x) := \lim_{t \to \infty}f_{X(t - s) \mid T_1 = t, X(t) = x} (z) =  e^{|\alpha| s} f(z, x; s; |\alpha|), \quad z > 0.  
\end{equation}
\end{theorem}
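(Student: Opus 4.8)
The plan is to obtain all five formulae as pointwise $t \to \infty$ limits of the corresponding finite-$t$ results already in hand — Lemma~\ref{lemma:IntermediateAncestors}, Theorems~\ref{theorem:jointCoalescentDistrib}, \ref{theorem:AnSample} and~\ref{theorem:pastPopn} — and then to upgrade pointwise convergence to the claimed weak convergence. Every limit is driven by a single elementary fact about the functions in Eq.~(\ref{mubetaDef}): for fixed $\alpha \neq 0$ one has $\beta(t;|\alpha|) \to \infty$ and $\mu(t;|\alpha|) \to 2|\alpha|$ as $t \to \infty$, so $x/\beta(t;|\alpha|) \to 0$ and $1/\beta(t;|\alpha|) \to 0$; for $\alpha = 0$ the same holds with $\beta(t;0) = t/2 \to \infty$.

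First I would dispatch the two discrete laws and the coalescent-time densities, all of which are routine substitutions. Letting $t \to \infty$ in Eq.~(\ref{ancestorsGivenT1}) replaces $x/\beta(s;|\alpha|) - x/\beta(t;|\alpha|)$ by $x/\beta(s;|\alpha|)$ and yields Eq.~(\ref{numberOfAncestors}). In Theorem~\ref{theorem:AnSample} the argument $\eta(s,t) = 1/\beta(s;|\alpha|) - 1/\beta(t;|\alpha|)$ tends to $1/\beta(s;|\alpha|)$, which on substitution into the ${}_1F_1$ expression gives the stated law of $M_n(s;x)$. The same limit $x/\beta(t;|\alpha|) \to 0$ applied to Eqs.~(\ref{densityTkGivenx}) and~(\ref{jointTkGivenx}) produces Eqs.~(\ref{fTkInfT1}) and~(\ref{joint_fTkInfT1}) at once.

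The one step demanding real care is the population density Eq.~(\ref{pastDensity}), obtained from Theorem~\ref{theorem:pastPopn}. As $t \to \infty$ the exponential factors $e^{-z/\beta(t-s;|\alpha|)}$ and $e^{x/\beta(t;|\alpha|)}$ both tend to $1$, but the prefactor $\tfrac{\mu(t-s;|\alpha|)}{\beta(t-s;|\alpha|)}\tfrac{\beta(t;|\alpha|)}{\mu(t;|\alpha|)}$ is of indeterminate $0 \cdot \infty$ form and must be evaluated exactly. Inserting the closed forms of Eq.~(\ref{mubetaDef}) with $\alpha \to |\alpha|$, the prefactor equals $e^{-|\alpha|s}\,(e^{|\alpha|t}-1)^2/(e^{|\alpha|(t-s)}-1)^2$, which tends to $e^{-|\alpha|s}\cdot e^{2|\alpha|s} = e^{|\alpha|s}$ (and to $1$ in the critical case). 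A useful sanity check, which also establishes that the limit is a genuine density, is that $e^{|\alpha|s}$ is exactly the reciprocal of $\int_0^\infty f(z,x;s;|\alpha|)\,dz = 1/(\mu(s;|\alpha|)\beta(s;|\alpha|)) = e^{-|\alpha|s}$; thus Eq.~(\ref{pastDensity}) is just the Feller transition density viewed as a function of the earlier population $z$ and renormalised to integrate to unity.

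Finally, I would justify that each pointwise limit is the asserted weak limit. For the two discrete laws this is immediate, since pointwise convergence of probability mass functions to a mass function is equivalent to convergence in distribution; Eq.~(\ref{numberOfAncestors}) is a shifted Poisson and manifestly sums to $1$. For the three continuous densities I would invoke Scheff\'{e}'s lemma, under which pointwise convergence of densities to a probability density forces total-variation, hence weak, convergence. Normalisation of Eqs.~(\ref{fTkInfT1}) and~(\ref{joint_fTkInfT1}) is inherited from the non-homogeneous Poisson-process structure identified in the proof of Theorem~\ref{theorem:jointCoalescentDistrib} (now with no truncation at $t$), and that of Eq.~(\ref{pastDensity}) was checked above. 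The main obstacle is precisely the exact evaluation of the indeterminate prefactor in the population-density limit; every remaining limit collapses to the single substitution $x/\beta(t;|\alpha|) \to 0$.
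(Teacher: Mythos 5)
Your proposal is correct and follows essentially the same route as the paper: both obtain all five formulae by letting $t \to \infty$ in Lemma~\ref{lemma:IntermediateAncestors}, Theorems~\ref{theorem:AnSample} and~\ref{theorem:jointCoalescentDistrib}, and Theorem~\ref{theorem:pastPopn}, using $\mu(t;|\alpha|) \to 2|\alpha|$, $\beta(t;|\alpha|)^{-1} \to 0$, and the evaluation of the prefactor limit $\beta(t;|\alpha|)/\beta(t-s;|\alpha|) \to e^{|\alpha| s}$. Your added touches (the explicit resolution of the $0\cdot\infty$ prefactor, the normalisation check against $\int_0^\infty f(z,x;s;|\alpha|)\,dz = e^{-|\alpha| s}$, and the appeal to Scheff\'{e}'s lemma to upgrade pointwise to weak convergence) merely make explicit what the paper leaves implicit.
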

\begin{proof}
For $\alpha \in \mathbb{R}$ , $\mu(t; |\alpha|) \to 2|\alpha|$ and $\beta(t; |\alpha|)^{-1} \to 0$ as $t \to \infty$.  The first four results then follow immediately from Lemma~\ref{lemma:IntermediateAncestors}, Theorem~\ref{theorem:AnSample}, 
and the two parts of Theorem~\ref{theorem:jointCoalescentDistrib} respectively.  

For $\alpha \in \mathbb{R}$,
\[
\frac{\mu(t - s; |\alpha|)}{\mu(t; |\alpha|)} \to 1, \quad \frac{\beta(t; |\alpha|)}{\beta(t - s; |\alpha|)} \to e^{|\alpha| s}, 
\]
and the final identity then follows immediately from Theorem~\ref{theorem:pastPopn}.  
\end{proof}

\begin{figure}
 \centering
    \includegraphics[width=0.65\linewidth]{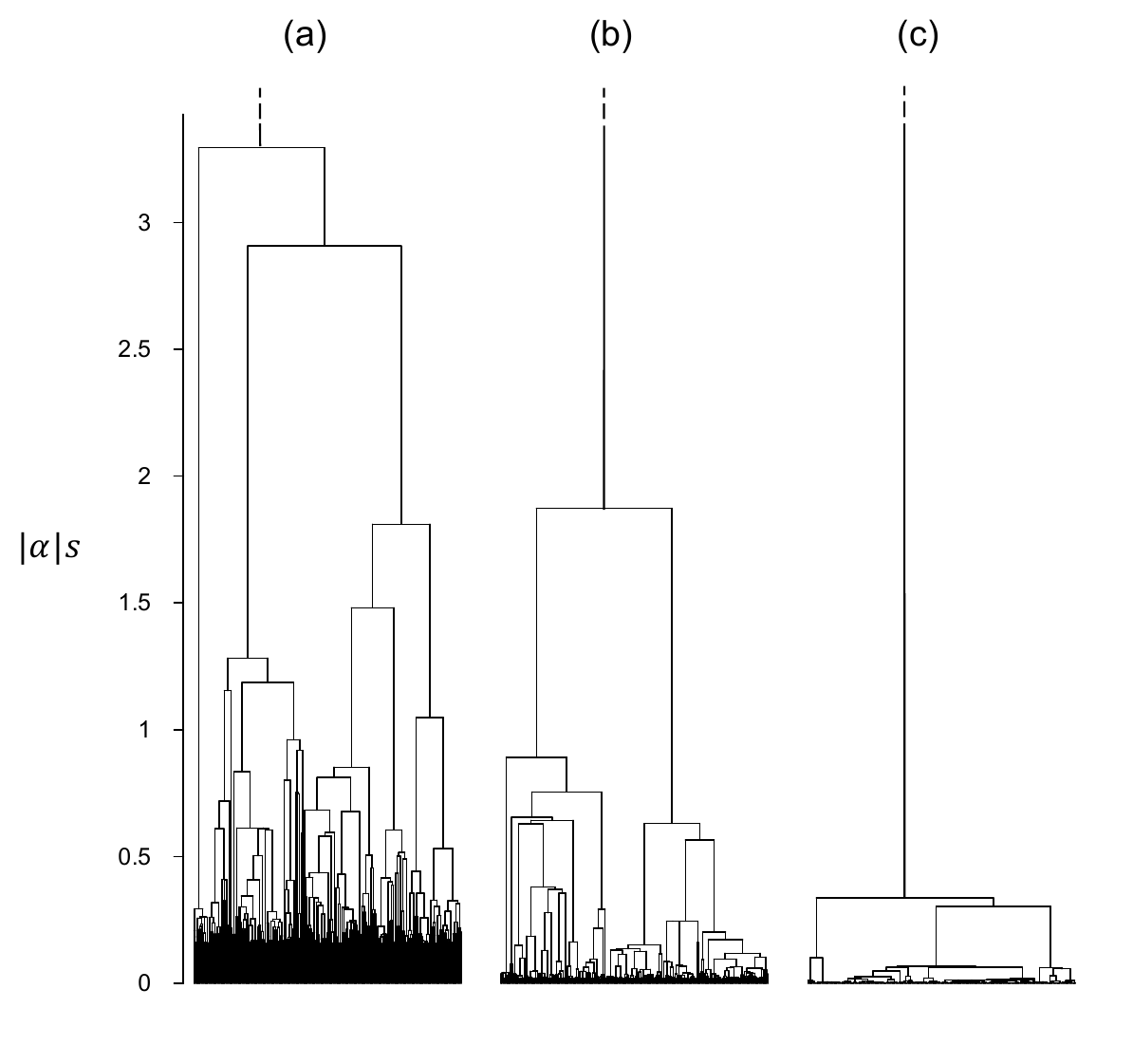}
 \caption{Simulated coalescent trees for Feller diffusions initiated from a single ancestral founder infinitely far in 
 the past conditioned on final observed populations $X(t) = x$ for (a) $\alpha x=10$, (b) $\alpha x = 1$ and (c) $\alpha x = 0.1$.  The time $s$ is measured back from the present.} 
 \label{fig:InfT1Trees}
 \end{figure}

Figure~\ref{fig:InfT1Trees} shows simulated coalescent trees computed from the joint distribution Eq.~(\ref{joint_fTkInfT1}) of coalescent times 
for a range of scaled final populations $|\alpha| x$.  \citet[Lemma~2]{BurdenGriffiths24} show that the part of these trees above a height $s = 2\epsilon$ 
can also be generated by a BD process with the birth and death rates of Eq.~(\ref{BDlimit}) taking the particular form 
\[
\hat\lambda(\epsilon) = \frac{\alpha}{1 - e^{-2\alpha\epsilon}}, \quad \hat\mu(\epsilon) = \frac{\alpha e^{-2\alpha\epsilon}}{1 - e^{-2\alpha\epsilon}}.  
\]
In Section~\ref{sec:LargeXDistribs} below we show that asymptotically the joint distribution of 
$T_1, \ldots, T_k$ for any fixed $k$ maintains its shape and is shifted upwards by an amount $\log 2\alpha x$ as $\alpha x \to \infty$.

%
%

\section{Improper Uniform prior on $T_1$}
\label{sec:UnifT1}

Various authors \citep{Aldous05,Gernhard08,Wiuf18,Ignatieva20} have analysed the coalescent tree of a finite population, continuous-time BD process conditioned on an observed final population 
(generally referred to as a {\em conditioned reconstructed process}) by imposing an improper uniform prior on the time since initiation of the process by a single founder.  
\citet{BurdenGriffiths24} have extended this coalescent tree construction to a supercritical Feller diffusion by exploiting the near-critical limit of a BD process summarised in 
Section~\ref{sec:DiffusionLimits}.  Here we reproduce defining properties of the coalescent tree, namely the distributions of numbers of ancestors $A_\infty(s)$  
and coalescent times $T_k$ for a Feller diffusion conditioned on $X(t) = x$ under the assumption of an improper uniform prior on $T_1$ directly from 
the results of Subsection~\ref{sec:IntermediateAncestors}.  

\begin{definition}
Consider a uniform prior with cutoff $\Lambda$ on the time $T_1$ since initiation from an unspecified population containing a single founding ancestor 
of the current population: 
\begin{equation*}	
f_{T_1}(t) = \begin{cases}
	\Lambda ^{-1} & 0 \le t \le \Lambda; \\
	0                     & t > \Lambda. 
	\end{cases}
\end{equation*} 
Define {\em probability subject to an improper uniform prior on $T_1$} to mean the limit of any posterior probability as $\Lambda \to \infty$.  
Such probabilities will be denoted $\mathbb{P}^{{\rm unif}\,T_1} (\cdot)$ and corresponding densities will be denoted $f^{{\rm unif}\,T_1} (\cdot)$.
\end{definition}

\begin{lemma}	\label{lemma:PosteriorT1}
For a Feller diffusion $\big(X(\tau)\big)_{\tau \in [0, T_1]}$ with parameter $\alpha \in \mathbb{R}$, 
subject to an improper uniform prior on $T_1$ and conditioned on $X(T_1) = x > 0$, the posterior 
density of $T_1$ is 
\begin{equation}		\label{uniformPosteriorT1}
f^{{\rm unif}\,T_1} _{T_1} (t; x) := f^{{\rm unif}\,T_1} _{T_1 \mid X(T_1) = x} (t) = 
	\frac{x}{2} \frac{\mu(t; |\alpha|)}{\beta(t; |\alpha|)} e^{-x/\beta(t; |\alpha|)} = \frac{d}{dt} e^{-x/\beta(t; |\alpha|)}, \quad t > 0.  
\end{equation}
\end{lemma}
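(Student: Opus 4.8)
The plan is to compute the posterior of $T_1$ by Bayes' rule against the uniform prior $f_{T_1}(t)=\Lambda^{-1}$ of the preceding Definition, and then to let $\Lambda\to\infty$. Since the prior density is constant on $[0,\Lambda]$, it cancels between the numerator and the normalising integral, leaving
\[
f^{{\rm unif}\,T_1}_{T_1}(t;x)=\lim_{\Lambda\to\infty}\frac{\ell(t;x)}{\int_0^\Lambda\ell(t';x)\,dt'},\qquad t>0,
\]
where $\ell(t;x)$ is the likelihood of the data, namely that the current population descends from a single founder of age $t$ and has size $X(T_1)=x$. The two remaining tasks are to pin down $\ell(t;x)$ and to evaluate the normalising integral in closed form.

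The step I expect to be the main obstacle is the correct choice of $\ell(t;x)$. It is tempting to use the conditional density of Lemma~\ref{lemma:OConnell}, $\beta(t)^{-1}e^{-x/\beta(t)}$, as the likelihood, but this is conditioned on survival of the single lineage and therefore discards the $t$-dependence of the survival probability, which biases the posterior. Instead I would take $\ell(t;x)$ to be the joint density that the single founder survives to time $t$ \emph{and} that $X(t)\in(x,x+dx)$; this is precisely the $l=1$ term of the Poisson--gamma solution in the single-founder limit. From Eq.~(\ref{flDef}),
\[
\ell(t;x)=\lim_{x_0\to0}\frac{1}{x_0}\,f_1(x_0,x;t)=\frac{\mu(t)}{\beta(t)}\,e^{-x/\beta(t)},
\]
equivalently the survival probability $x_0\mu(t)$ times the O'Connell density $\beta(t)^{-1}e^{-x/\beta(t)}$. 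The factor $x_0$ is common to all $t$ and cancels in the Bayes ratio, so the improper limit is well defined; crucially, the retained factor $\mu(t)$ is exactly what separates this likelihood from O'Connell's and supplies the stated prefactor.

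Finally I would evaluate the normalising integral. The identity Eq.~(\ref{mubetaIdentity1}) gives $\tfrac{d}{dt}e^{-x/\beta(t)}=\tfrac{1}{2}x\,\mu(t)\beta(t)^{-1}e^{-x/\beta(t)}$, so $\ell(t;x)=\tfrac{2}{x}\tfrac{d}{dt}e^{-x/\beta(t)}$ and the integral collapses to the boundary values of $e^{-x/\beta(t)}$. Since $\beta(t)\to0$ as $t\to0$ the lower limit contributes $0$, while the upper limit depends on the sign of $\alpha$. For $\alpha>0$, $\beta(t;\alpha)\to\infty$, so $\int_0^\infty\ell(t';x)\,dt'=2/x$ and the posterior is exactly $\tfrac{1}{2}x\,\mu(t)\beta(t)^{-1}e^{-x/\beta(t)}=\tfrac{d}{dt}e^{-x/\beta(t)}$, matching the stated $|\alpha|$ form since $|\alpha|=\alpha$. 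For $\alpha<0$, $\beta(t;\alpha)\to1/(2|\alpha|)$, so the normalising constant is $\tfrac{2}{x}e^{-2|\alpha|x}$; using $\beta(t;|\alpha|)^{-1}-\beta(t;\alpha)^{-1}=-2|\alpha|$, a $t$-independent constant that follows from the change-of-sign symmetry in Eq.~(\ref{mubetaIdentity4}), the density $\ell$ written with $\beta(t;\alpha)$ differs from its $|\alpha|$ counterpart only by the constant factor $e^{-2|\alpha|x}$, which the normalisation removes. Hence the normalised posterior is the single $|\alpha|$ expression in all three regimes, the critical case recovered as $\alpha\to0$.
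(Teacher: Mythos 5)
Your proposal is correct and follows essentially the same route as the paper: Bayes' rule against the uniform prior, the $x_0\to 0$ limit of the $l=1$ term of the Poisson--gamma solution as the (unnormalised) likelihood, the total-derivative identity from Eq.~(\ref{mubetaIdentity1}) to collapse the normalising integral to boundary values, and the sign-change identities to reduce the subcritical case to the $|\alpha|$ form. Your explicit remark that the survival-conditioned O'Connell density would be the wrong likelihood is a useful clarification that the paper leaves implicit, though the constant $\beta(t;|\alpha|)^{-1}-\beta(t;\alpha)^{-1}=-2|\alpha|$ really follows from Eqs.~(\ref{mubetaIdentity1}) and (\ref{mubetaIdentity3}) rather than (\ref{mubetaIdentity4}).
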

\begin{proof}
Recall that when conditioning on $X(t) > 0$, the $X(0) = x_0 \to 0$ limit of the Feller diffusion solution Eq.~(\ref{Poisson_Gamma}) 
captures the density due to a single founder.  The posterior probability of $T_1$ conditioned on $X(t) = x > 0$ is then 
\begin{eqnarray*}
\lefteqn{\mathbb{P}^{{\rm unif}\,T_1} (T_1 \in (t, t + dt) \mid X(t) = x) } \\
	& = & \lim_{\Lambda \to \infty} \frac{\mathbb{P}(X(t) \in (x, x + dx) \mid T_1 = t) \times \mathbb{P}(T_1 \in (t, t + dt))}
						{\mathbb{P}(X(t) \in (x, x + dx))} \nonumber \\
	& = & \lim_{\Lambda \to \infty} \lim_{x_0 \to 0}\frac{\mathbb{P}(X(t) \in (x, x + dx) \mid X(0) = x_0) \times \Lambda^{-1} dt}
						{\int_0^\Lambda \mathbb{P}(X(\xi) \in (x, x + dx) \mid X(0) = x_0) \times \Lambda^{-1} d\xi} \nonumber \\
	& = &  \lim_{x_0 \to 0}\frac{f(x_0, x; t)dt}{\int_0^\infty f(x_0, \xi; t) d\xi}.
\end{eqnarray*}
After cancelling the factors of $x_0$ and taking the limit the numerator is 
\[
\frac{\mu(t)}{\beta(t)} e^{-x/\beta(t)} dt . 
\]
For $\alpha \ge 0$ the denominator is, from Eq.~(\ref{mubetaIdentity1}) and noting that $1/\beta(\infty) = 0$ for $\alpha \ge 0$, 
\[
\int_0^\infty \frac{\mu(\xi)}{\beta(\xi)} e^{-x/\beta(\xi)} d\xi = \frac{2}{x} \int_0^\infty \left\{\frac{d}{d\xi} e^{-x/\beta(\xi)}\right\}  d\xi = \frac{2}{x},
\]
and Eq.~(\ref{uniformPosteriorT1}) follows immediately.  
For $\alpha < 0$, $1/\beta(\infty) = -2\alpha$ and the denominator is $(2/x)\times e^{2\alpha x}$, giving (see 
Eqs.~(\ref{mubetaIdentity1}) and (\ref{mubetaIdentity3})), 
\[
f^{{\rm unif}\,T_1} _{T_1} (t; x) = \frac{x}{2} \frac{\mu(t; \alpha)}{\beta(t; \alpha)} e^{-x(\beta(t; \alpha)^{-1} + 2\alpha)} = \frac{x}{2} \frac{\mu(t; \alpha)}{\beta(t; \alpha)} e^{-x\mu(t; \alpha)} = \frac{x}{2} \frac{\mu(t; |\alpha|)}{\beta(t; |\alpha|)} e^{-x/\beta(t; |\alpha|)}
\]
as required.  
\end{proof}

The posterior distribution of the number of ancestors at a time $s$ in the past of a current population $x$ assuming an improper uniform prior 
on $T_1$ is derived in \citet[Theorem~4]{BurdenGriffiths24} for a supercritical Feller diffusion by constructing a conditioned reconstructed process 
from the limit of a continuous-time birth-death process.  \citet[Section~6]{BurdenGriffiths24} argue that their theorem does not apply to a subcritical 
diffusion because the conditioned reconstructed 
process may not converge to a single ancestor.  Following is an alternative proof, which extends this theorem to also cover subcritical diffusions
by implicitly conditioning on convergence to a single ancestor.  This conditioning is inherent in the proof of Lemma~\ref{lemma:PosteriorT1} above 
in the normalisation introduced via the application of Bayes theorem.  
\begin{theorem}	\label{theorem:unifT1Distribs}
For a Feller diffusion $\big(X(\tau)\big)_{\tau \in [0, T_1]}$with parameter $\alpha \in \mathbb{R}$ and  
subject to an improper uniform prior on $T_1$ and conditioned on $X(T_1) = x > 0$, the posterior probability 
that the current population has $k$ ancestors at time $s$ in the past is 
\begin{equation}		\label{PoisAncestors}
\mathbb{P}^{{\rm unif}\,T_1}(A_\infty(s) = k \mid X(T_1) = x) 
	= \frac{1}{k!} \left(\frac{x}{\beta(s; |\alpha|)} \right)^k e^{-x/\beta(s; |\alpha|)}, \qquad k = 0, 1, 2 \ldots
\end{equation}
The posterior marginal density of the $k$th coalescent time is 
\[
f_{T_k \mid X(T_1) = x}^{{\rm unif}\,T_1}(s) = 
	\frac{1}{(k - 1)!} \left(\frac{x}{\beta(s; |\alpha|)} \right)^{k - 1} \frac{x\mu(s; |\alpha|)}{2\beta(s; |\alpha|)} e^{-x/\beta(s; |\alpha|)}, \qquad s \ge 0,\quad k = 1, 2, \ldots,  
\]
and the joint posterior density of the $k$ population coalescent times $T_1, \ldots T_k$ is 
\begin{eqnarray*}
f_{T_1, \ldots, T_k \mid X(T_1) = x}(s_1, \ldots, s_k) & = & \left(\prod_{j = 1}^k \frac{x \mu(s_j; |\alpha|)}{2\beta(s_j; |\alpha|)} \right) e^{-x/\beta(s_k; |\alpha|)}, \nonumber \\
	& = & \left(\prod_{j = 1}^k \frac{d}{ds_j} e^{-x/\beta(s_j; |\alpha|)} \right) e^{-x/\beta(s_k; |\alpha|)}, \nonumber \\
	&  & \qquad s_1 > s_2 > \ldots > s_k > 0,\quad k = 1, 2, \ldots 
\end{eqnarray*}
\end{theorem}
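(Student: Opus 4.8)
The plan is to obtain every statement by marginalising the conditional (fixed-$T_1$) results of Section~\ref{sec:singleAncestor} against the posterior density of $T_1$ supplied by Lemma~\ref{lemma:PosteriorT1}. Since Lemma~\ref{lemma:IntermediateAncestors}, Theorem~\ref{theorem:jointCoalescentDistrib} and Lemma~\ref{lemma:PosteriorT1} are all already expressed through $\beta(\cdot;|\alpha|)$ and $\mu(\cdot;|\alpha|)$, I would treat $|\alpha|$ as an effective non-negative parameter throughout, so that $\beta(t;|\alpha|)\to\infty$ and $x/\beta(t;|\alpha|)\to 0$ as $t\to\infty$; the computation is then uniform in the sign of $\alpha$, and the subcritical case is automatically included because the Bayesian normalisation inside the proof of Lemma~\ref{lemma:PosteriorT1} has already conditioned on coalescence to a single founder. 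The single change of variable doing all the work will be $v=x/\beta(t;|\alpha|)$, under which the posterior weight becomes $f^{{\rm unif}\,T_1}_{T_1}(t;x)\,dt=\tfrac{d}{dt}e^{-x/\beta(t)}\,dt=-e^{-v}\,dv$, with $v$ running from $a:=x/\beta(s)$ down to $0$ as $t$ runs from $s$ to $\infty$.

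For the number of ancestors I would write, for $k\ge 1$,
\[
\mathbb{P}^{{\rm unif}\,T_1}(A_\infty(s)=k\mid X(T_1)=x)=\int_s^\infty \mathbb{P}(A_\infty(s)=k\mid T_1=t,X(t)=x)\,f^{{\rm unif}\,T_1}_{T_1}(t;x)\,dt,
\]
insert the shifted-Poisson form $(a-v)^{k-1}e^{-(a-v)}/(k-1)!$ from Lemma~\ref{lemma:IntermediateAncestors}, and note that the factor $e^{-v}$ collapses the two exponentials to the constant $e^{-a}$, so the integral reduces to $e^{-a}\int_0^a (a-v)^{k-1}\,dv/(k-1)!=a^k e^{-a}/k!$, which is Eq.~(\ref{PoisAncestors}) for $k\ge 1$. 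The value $k=0$ is not covered by Lemma~\ref{lemma:IntermediateAncestors}, since conditioning on $T_1=t\ge s$ forces at least the founding lineage, so I would treat it separately: $A_\infty(s)=0$ precisely when the founder postdates time $s$, i.e.\ $T_1<s$, and $\mathbb{P}^{{\rm unif}\,T_1}(T_1<s)=\int_0^s \tfrac{d}{dt}e^{-x/\beta(t)}\,dt=e^{-a}$ because $e^{-x/\beta(t)}\to 0$ as $t\to 0^+$. This supplies the missing $k=0$ atom and confirms the distribution is a genuine $\mathrm{Poisson}(x/\beta(s))$.

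The coalescent-time densities follow by the same marginalisation, but with the index shift that under the prior the root $T_1$ is itself random and joins the family $T_2,T_3,\dots$. For the joint density I would multiply the posterior $f^{{\rm unif}\,T_1}_{T_1}(s_1;x)=\tfrac{x\mu(s_1)}{2\beta(s_1)}e^{-x/\beta(s_1)}$ by the conditional joint density of $T_2,\dots,T_k$ given $T_1=s_1$ from Eq.~(\ref{jointTkGivenx}); there the factor $e^{+x/\beta(s_1)}$ cancels the $e^{-x/\beta(s_1)}$ in the posterior, leaving exactly $\bigl(\prod_{j=1}^k \tfrac{x\mu(s_j)}{2\beta(s_j)}\bigr)e^{-x/\beta(s_k)}$. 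For the marginal of $T_k$ with $k\ge 2$ I would integrate Eq.~(\ref{densityTkGivenx}) against the posterior using the same $v$ substitution; again the exponentials collapse to $e^{-a}$ and the polynomial integrates to $a^{k-1}/(k-1)!$, while the case $k=1$ is Lemma~\ref{lemma:PosteriorT1} verbatim. Collectively these identify $\{T_k\}$ with the points of a non-homogeneous Poisson process of intensity $x\mu(s)/(2\beta(s))$ whose survivor count in $(s,\infty)$ is $A_\infty(s)$, exactly as in Theorem~\ref{theorem:jointCoalescentDistrib}.

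The step I expect to demand the most care is the bookkeeping of the relabelling together with the $k=0$ atom: one must recognise that conditioning on $T_1=t$ places the root outside the random coalescent family, so the fixed-$T_1$ results describe $T_2,T_3,\dots$ only, whereas the prior promotes $T_1$ to an ordinary random point of the same Poisson process. The clean cancellation of the $e^{\pm x/\beta(s_1)}$ factors is what makes the root indistinguishable from the interior coalescences, and the separate accounting of the event $\{T_1<s\}$ is what furnishes the otherwise-missing $k=0$ term; getting both right, uniformly in the sign of $\alpha$, is the crux.
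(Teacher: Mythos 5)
Your proposal is correct and follows essentially the same route as the paper: marginalise the fixed-$T_1$ results of Lemma~\ref{lemma:IntermediateAncestors} and Theorem~\ref{theorem:jointCoalescentDistrib} against the posterior of Lemma~\ref{lemma:PosteriorT1}, handle the $k=0$ atom via $\{T_1<s\}$, and obtain the joint density from the factorisation $f_{T_1}\times f_{T_2,\ldots,T_k\mid T_1}$ with the exponentials cancelling. The only (immaterial) deviation is in the marginal of $T_k$, where you integrate Eq.~(\ref{densityTkGivenx}) against the posterior rather than differentiating the cumulative sum in Eq.~(\ref{TkFromAn}) as the paper does; both yield the stated density.
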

\begin{proof}
From Eq.~(\ref{ancestorsGivenT1}) and (\ref{mubetaIdentity4}), for $k > 0$, 
\begin{eqnarray*}
\mathbb{P}^{{\rm unif}\,T_1}(A_\infty(s) = k \mid X(T_1) = x) 
	& = & \int_s ^\infty \mathbb{P}(A_\infty(s) = k \mid T_1 = t, X(t) = x) f^{{\rm unif}\,T_1} _{T_1 \mid X(T_1) = x} (t)  \,dt \\
	& = & e^{-x/\beta(s; |\alpha|)} \int_s^\infty  \frac{(x/\beta(s; |\alpha|) - x/\beta(t; |\alpha|))^{k - 1}}{(k - 1)!}   \left\{\frac{d}{dt} \frac{-x}{\beta(t; |\alpha|)}\right\} dt \\
	& = & e^{-x/\beta(s; |\alpha|)} \int_0^{x/\beta(s; |\alpha|)}  \frac{u^{k - 1}}{(k - 1)!} du \\
	& = & \frac{1}{k!} \left(\frac{x}{\beta(s; |\alpha|)} \right)^k e^{-x/\beta(s; |\alpha|)}, \qquad k = 1, 2 \ldots,   
\end{eqnarray*}
while for $k = 0$, 
\begin{eqnarray*}
\mathbb{P}^{{\rm unif}\,T_1}(A_\infty(s) = 0 \mid X(T_1) = x) & = & \mathbb{P}^{{\rm unif}\,T_1} (T_1 < s \mid X(T_1) = x) \\
	& = & \int_0^s f^{\rm unif}_{T_1} (\xi ; x) d\xi \\
	& = & e^{-x/\beta(s; |\alpha|)}, 
\end{eqnarray*}
which proves Eq.~(\ref{PoisAncestors}) for all $k \ge 0$.  

The marginal density of the $k$th coalescent time is, from Eqs.~(\ref{TkFromAn}) and (\ref{mubetaIdentity1}),
\begin{eqnarray*}
f_{T_k \mid X(T_1) = x}^{{\rm unif}\,T_1}(s) & = & \frac{d}{ds} \sum_{j = 0}^{k - 1} \frac{1}{j!} \left(\frac{x}{\beta(s; |\alpha|)} \right)^j e^{-x/\beta(s; |\alpha|)} \nonumber \\
	& = & \left(\sum_{j = 0}^{k - 1} \frac{1}{j!} \left(\frac{x}{\beta(s; |\alpha|)} \right)^j - \sum_{j = 1}^{k - 1} \frac{1}{(j - 1)!} \left(\frac{x}{\beta(s; |\alpha|)} \right)^{j - 1} \right)
									\frac{x\mu(s; |\alpha|)}{2\beta(s; |\alpha|)} e^{-x/\beta(s; |\alpha|)} \nonumber \\ 
	& = & \frac{1}{(k - 1)!} \left(\frac{x}{\beta(s; |\alpha|)} \right)^{k  - 1} \frac{x\mu(s; |\alpha|)}{2\beta(s; |\alpha|)} e^{-x/\beta(s; |\alpha|)}, \qquad s \ge 0.  
\end{eqnarray*}

The posterior joint density of $T_1, \ldots, T_k$ is, from Eq.~(\ref{jointTkGivenx}) and (\ref{mubetaIdentity4})  
\begin{eqnarray*}
\lefteqn{f_{T_1, \ldots, T_k \mid X(T_1) = x}^{{\rm unif}\,T_1}(s_1, \ldots, s_k)} \nonumber \\ 
	& = & f_{T_1 \mid X(T_1) = x}^{{\rm unif}\,T_1}(s_1) \times f_{T_2, \ldots, T_k \mid T_1 = s_1, X(s_1) = x}(s_2, \ldots, s_k) \nonumber \\
	& = & \frac{x\mu(s_1; |\alpha|)}{2\beta(s_1; |\alpha|)} e^{-x/\beta(s_1; |\alpha|)} 
			\times \left(\prod_{j = 2}^k \frac{x \mu(s_j; |\alpha|)}{2\beta(s_j; |\alpha|)} \right) e^{-(x/\beta(s_k; |\alpha|) - x/\beta(s_1; |\alpha|))} \nonumber \\
	& = & \left(\prod_{j = 1}^k \frac{x \mu(s_j; |\alpha|)}{2\beta(s_j; |\alpha|)} \right) e^{-x/\beta(s_k; |\alpha|)}.
\end{eqnarray*} 
The alternative form of the posterior joint density follows from Eq.~(\ref{mubetaIdentity1}).  
\end{proof}

The distributions of Theorem~\ref{theorem:unifT1Distribs} match those of Theorem~\ref{theorem:infT1Distribs} if the index $k$ is replaced by $k + 1$.  
That is, the coalescent time $T_k$ in the presence of an improper uniform on $T_1$ finds its analogue in the coalescent time $T_{k + 1}$ for a populaton descended 
from a single ancestral founder infinitely far in the past.  In terms of the simulations of Fig.~\ref{fig:InfT1Trees}, this means that each of the two subtrees below the first 
branching is itself a simulation of the coalescent tree descended from a single ancestor under an improper uniform prior.
%
%
%

\section{Improper uniform prior on $X(0)$}
\label{sec:improperUnifX0}

In this section only, we drop the assumption of a single ancestral founder and instead assume an improper uniform prior on the initial 
population $X(0)$.  
\begin{definition}
Consider a uniform prior with cutoff $\Lambda$ on the initial population $X(0)$: 
\begin{equation}	\label{uniformPriorX0}
f_{X(0)}(x_0) = \begin{cases}
	\Lambda ^{-1} & 0 \le x_0 \le \Lambda; \\
	0                     & x_0 > \Lambda, 
	\end{cases}
\end{equation}
Define {\em probability subject to an improper uniform prior on $X(0)$} to mean the limit of any posterior probability as $\Lambda \to \infty$.  
Such probabilities will be denoted $\mathbb{P}^{{\rm unif}\,X(0)} (\cdot)$ and corresponding densities will be denoted $f^{{\rm unif}\,X(0)} (\cdot)$.
\end{definition}

\begin{lemma}	\label{lemma:PosteriorX0}
For a Feller diffusion $\big(X(\tau)\big)_{\tau \in [0, t]}$ subject to an improper uniform prior on $X(0)$ and conditioned on $X(t) = x > 0$, the posterior 
density of $X(0)$ is 
\begin{equation}		\label{uniformPosteriorX0}
f^{{\rm unif}\,X(0)}_{X(0) \mid X(t) = x}(x_0) = e^{\alpha t} f(x_0, x; t), \qquad x_0 > 0.  
\end{equation}
\end{lemma}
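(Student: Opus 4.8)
The plan is to apply Bayes' theorem exactly as in the proof of Lemma~\ref{lemma:PosteriorT1}, but now treating $X(0)$ rather than $T_1$ as the random quantity carrying the improper prior. First I would write the posterior density under the uniform prior with cutoff $\Lambda$ as
\[
f^{{\rm unif}\,X(0)}_{X(0)\mid X(t)=x}(x_0)
 = \lim_{\Lambda\to\infty}
 \frac{f(x_0,x;t)\,\Lambda^{-1}}{\displaystyle\int_0^\Lambda f(\xi,x;t)\,\Lambda^{-1}\,d\xi}
 = \frac{f(x_0,x;t)}{\displaystyle\int_0^\infty f(\xi,x;t)\,d\xi},
\]
where the factors of $\Lambda^{-1}$ cancel and the cutoff disappears once the normalising integral is finite. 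So the entire task reduces to evaluating the normalisation $\int_0^\infty f(\xi,x;t)\,d\xi$ for a fixed observed $x>0$.

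Next I would compute that integral by substituting the Poisson--gamma form Eq.~(\ref{Poisson_Gamma}). Since $x>0$, the Dirac-$\delta$ term drops out, leaving $\sum_{l=1}^\infty\int_0^\infty f_l(\xi,x;t)\,d\xi$. Using the definition Eq.~(\ref{flDef}), each integrand is (in $\xi$) a product $\xi^l e^{-\xi\mu(t)}$ times $x$- and $t$-dependent constants, so $\int_0^\infty \xi^l e^{-\xi\mu(t)}\,d\xi = l!/\mu(t)^{l+1}$ yields a closed form for each term. Summing over $l\ge 1$ should collapse to an exponential series in $x/(\beta(t)\,\text{(something)})$; I expect the sum to telescope to a clean expression whose reciprocal is $e^{\alpha t}$. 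A cleaner route, which I would pursue in parallel, is to recognise $\int_0^\infty f(\xi,x;t)\,d\xi$ as $x$ fixed but integrating over the \emph{initial} value: by the backward/forward duality of the Feller generator, or directly from the identities in \ref{sec:MuBetaIdentities}, one can read off $\int_0^\infty f(\xi,x;t)\,d\xi = e^{-\alpha t}$. Either way, substituting this normalisation gives $f^{{\rm unif}\,X(0)}_{X(0)\mid X(t)=x}(x_0) = e^{\alpha t} f(x_0,x;t)$, which is exactly Eq.~(\ref{uniformPosteriorX0}).

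The main obstacle I anticipate is the evaluation of $\int_0^\infty f(\xi,x;t)\,d\xi$ and confirming it equals $e^{-\alpha t}$ for all sign choices of $\alpha$, since the $\mu(t;\alpha)$ and $\beta(t;\alpha)$ functions behave differently as $t\to\infty$ in the sub- and supercritical regimes. I would double-check the supercritical case ($\alpha>0$), where $\mu(t)$ does not tend to zero, to make sure the series converges and the cutoff can indeed be sent to infinity before the limit. I expect the $\mu\beta$ identities collected in \ref{sec:MuBetaIdentities} (in particular the relation tying $\mu(t)\beta(t)$ to $e^{\alpha t}$) to furnish the value directly, so that the argument is a short Bayes computation plus one integral identity, mirroring the structure of the preceding lemma but with the roles of space and time interchanged.
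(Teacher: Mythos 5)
Your proposal follows essentially the same route as the paper: Bayes' theorem with the cutoff prior, cancellation of the $\Lambda^{-1}$ factors, and evaluation of the normalising integral $\int_0^\infty f(\xi,x;t)\,d\xi$ term by term from the Poisson--gamma series, where the sum over $l$ reassembles into $e^{x/\beta(t)}$ and the identity $\mu(t)\beta(t)=e^{\alpha t}$ gives the value $e^{-\alpha t}$. Your worry about the sign of $\alpha$ is unfounded since $\mu(t;\alpha)>0$ for all $\alpha$, so the gamma integrals and the series converge uniformly in the sign; otherwise the argument is exactly the paper's.
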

\begin{proof}
\begin{eqnarray*}
\lefteqn{\mathbb{P}^{{\rm unif}\,X(0)}(X(0) \in (x_0, x_0 + dx_0) \mid X(t) = x)  } \nonumber \\
	& = & \lim_{\Lambda \to \infty} \frac{\mathbb{P}(X(t) \in (x, x + dx) \mid X(0) = x_0) \times \mathbb{P}(X(0) \in (x_0, x_0 + dx_0))}
						{\mathbb{P}(X(t) \in (x, x + dx))} \nonumber \\
	& = & \frac{f(x_0, x; t) dx_0}{\int_0^\infty f(\xi, x; t) d\xi },  					
\end{eqnarray*}
provided the integral in the denominator converges.  From Eq.~(\ref{Poisson_Gamma}), for $x > 0$ the denominator is 
\begin{eqnarray*}
\int_0^\infty f(\xi, x; t) d\xi & = & \int_0^\infty \sum_{l = 1}^\infty e^{-\xi \mu(t)} \frac{(\xi \mu(t))^l}{l!} \frac{x^{l - 1} e^{-x/\beta(t)}}{\beta(t)^l (l - 1)!} d\xi \\
	& = & \sum_{l = 1}^\infty \frac{1}{\mu(t) l!} \left(\int_0^\infty e^{-z} z^l dz\right)  \frac{x^{l - 1} e^{-x/\beta(t)}}{\beta(t)^l (l - 1)!} \\
	& = & \frac{1}{\mu(t)\beta(t)} \\
	& = & e^{-\alpha t},  
\end{eqnarray*}
and Eq.(\ref{uniformPosteriorX0}) follows.  
\end{proof}

The following theorem gives the posterior distribution of the number of initial ancestors.  

\begin{theorem}
For a Feller diffusion $\big(X(\tau)\big)_{\tau \in [0, t]}$ subject to an improper uniform prior on $X(0)$ and conditioned on $X(t) = x > 0$, 
the number $A_\infty(t)$ of initial ancestors of the population has the posterior probability 
\begin{equation}	\label{ancestrosUnifX0}
\mathbb{P}^{{\rm unif}\,X(0)}(A_\infty(t) = k \mid X(t) = x) = \frac{1}{(k - 1)!} \left(\frac{x}{\beta(t)}\right)^{k - 1} e^{-x/\beta(t)}, \qquad k = 1, 2, \ldots 
\end{equation}
\end{theorem}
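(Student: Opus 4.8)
The plan is to mirror the Bayesian computation used in the proof of Lemma~\ref{lemma:PosteriorX0}, now retaining the ancestor count $A_\infty(t) = k$ in the numerator. Applying Bayes' theorem with the uniform prior Eq.~(\ref{uniformPriorX0}) and cancelling the common factor $\Lambda^{-1}$, I would write the posterior probability as the ratio
\[
\mathbb{P}^{{\rm unif}\,X(0)}(A_\infty(t) = k \mid X(t) = x)
	= \frac{\int_0^\infty f_k(x_0, x; t)\, dx_0}{\int_0^\infty f(x_0, x; t)\, dx_0},
\]
where the numerator is the prior-averaged joint density of $(A_\infty(t) = k,\, X(t) = x)$, obtained from the interpretation in Section~\ref{sec:numberOfAncestors} that $\mathbb{P}(A_\infty(t) = k,\, X(t) \in (x, x+dx) \mid X(0) = x_0) = f_k(x_0, x; t)\,dx$, and the denominator is the prior-averaged marginal of $X(t)=x$. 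Equivalently, one can integrate the conditional $\mathbb{P}(A_\infty(t)=k \mid X(0)=x_0, X(t)=x) = f_k(x_0,x;t)/f(x_0,x;t)$ against the posterior density of $X(0)$ supplied by Lemma~\ref{lemma:PosteriorX0}, whereupon the factor $f(x_0,x;t)$ cancels and one is left with $e^{\alpha t}\int_0^\infty f_k(x_0,x;t)\,dx_0$.

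The denominator has already been evaluated in the proof of Lemma~\ref{lemma:PosteriorX0} as $\int_0^\infty f(x_0,x;t)\,dx_0 = 1/(\mu(t)\beta(t)) = e^{-\alpha t}$, so the only genuinely new work is the numerator. Using the explicit form Eq.~(\ref{flDef}) of $f_k$, the $x_0$-dependence is confined to the Poisson factor, and the required computation reduces to the elementary gamma integral
\[
\int_0^\infty \frac{(x_0 \mu(t))^k}{k!}\, e^{-x_0 \mu(t)}\, dx_0 = \frac{1}{\mu(t)}.
\]
Substituting and collecting the remaining $x$-dependent factors yields the posterior probability in the form $\big(e^{\alpha t}/\mu(t)\big)\, x^{k-1} e^{-x/\beta(t)}\big/\big(\beta(t)^k (k-1)!\big)$. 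I would then invoke the identity $\mu(t)\beta(t) = e^{\alpha t}$ (immediate from Eq.~(\ref{mubetaDef}) and already used in Lemma~\ref{lemma:PosteriorX0}), so that $e^{\alpha t}/\mu(t) = \beta(t)$ cancels one power of $\beta(t)$ and delivers the claimed shifted-Poisson form Eq.~(\ref{ancestrosUnifX0}). As a consistency check the probabilities sum to $e^{x/\beta(t)}e^{-x/\beta(t)} = 1$. I would also remark that, in contrast with the theorems of Sections~\ref{sec:singleAncestor}--\ref{sec:UnifT1}, no absolute value $|\alpha|$ is needed here: the identity $\mu(t)\beta(t) = e^{\alpha t}$ holds for every real $\alpha$, so the unsigned $\beta(t)$ appearing in the statement is correct for sub-, critical, and supercritical diffusions alike.

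The computation itself is routine, so the only point requiring care is the validity of exchanging the $\Lambda \to \infty$ limit with the $x_0$-integration, that is, the convergence of the denominator $\int_0^\infty f(x_0,x;t)\,dx_0$. This is precisely the convergence condition flagged in the proof of Lemma~\ref{lemma:PosteriorX0}, and since that integral equals the finite value $e^{-\alpha t}$ for all $\alpha \in \mathbb{R}$, the improper posterior is well defined and the interchange is legitimate. Once this is established, the result follows in a single line from the gamma integral above.
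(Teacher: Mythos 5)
Your proposal is correct and follows essentially the same route as the paper: the paper likewise integrates $\mathbb{P}(A_\infty(t)=k\mid X(0)=x_0,X(t)=x)=f_k(x_0,x;t)/f(x_0,x;t)$ against the posterior density $e^{\alpha t}f(x_0,x;t)$ from Lemma~\ref{lemma:PosteriorX0}, cancels $f$, evaluates the same gamma integral $\int_0^\infty \frac{(x_0\mu(t))^k}{k!}e^{-x_0\mu(t)}\,dx_0=1/\mu(t)$, and finishes with $\mu(t)\beta(t)=e^{\alpha t}$. Your added remarks on convergence of the normalising integral and on the absence of $|\alpha|$ are accurate but not needed beyond what Lemma~\ref{lemma:PosteriorX0} already establishes.
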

This is a shifted Poisson distribution.  Note that the number of founders is necessarily non-zero because of the assumption that $x > 0$.  
\begin{proof}
from Eqs.~(\ref{flDef}) and~(\ref{uniformPosteriorX0}),
\begin{eqnarray*}
\lefteqn{\mathbb{P}^{{\rm unif}\,X(0)}(A_\infty(t) = k \mid X(t) = x)} \nonumber \\ 
	& = & \int_0^\infty \mathbb{P}(A_\infty(t) = k \mid X(0) = x_0, X(t) = x) \times f^{{\rm unif}\,X(0)}_{X(0) \mid X(t) = x}(x_0, t) dx_0 \nonumber \\ 
	& = & \int_0^\infty \frac{f_k(x_0, x; t)}{f(x_0, x; t)} \times e^{\alpha t} f(x_0, x; t) dx_0 \nonumber \\ 
	& = & \int_0^\infty \frac{(x_0 \mu(t))^k}{k!} e^{-x_0\mu(t)} dx_0 \, \frac{x^{k - 1}}{(k - 1)! \beta(t)^k} e^{-x/\beta(t)} e^{\alpha t} \nonumber \\ 
	& = & \frac{1}{(k - 1)!} \left(\frac{x}{\beta(t)}\right)^{k - 1} e^{-x/\beta(t)}, \qquad k = 1, 2, \ldots  
\end{eqnarray*}
\end{proof}

An alternative way to define the improper uniform prior is to replace Eq.~(\ref{uniformPriorX0}) by 
\[
f_{X(0)}(x_0) = \epsilon e^{-\epsilon x_0}, 
\]
and take the limit $\epsilon \to 0+$ after calculating the posterior distribution.  It is straightforward to check that 
\begin{equation}	\label{uniformPosteriorX0Alt}
f^{{\rm unif}\,X(0)}_{X(0) \mid X(t) = x}(x_0) = \lim_{\epsilon \to 0+}\frac{f(x_0, x; t) e^{-\epsilon x_0}}{\int_0^\infty f(\xi, x; t) e^{-\epsilon \xi} d\xi }
		= e^{\alpha t} f(x_0, x; t).  
\end{equation}
For a supercritical diffusion, this provides an alternate proof of the first and final parts of Theorem~\ref{theorem:infT1Distribs}.  
By Eq.~(\ref{expFromSingleAncestor}), conditioning on a single 
ancestor at time $t$ in the past effectively sets an exponential prior with mean $\beta(t - s)$ on the population size at time $s$ in the past.  The corresponding 
posterior distribution in the limit $t - s \to \infty$ is Eq.~(\ref{uniformPosteriorX0Alt}) with $x_0$ replaced by $z$, $t$ replaced by $s$ and the limit $\epsilon \to 0$ 
replaced by $\beta(t - s)^{-1} \to 0$ for $\alpha > 0$, which leads to Eq.~(\ref{pastDensity}).  Similarly, replacing $t$ with $s$ in Eq.~(\ref{ancestrosUnifX0}) leads 
to Eq.~(\ref{numberOfAncestors}) for $\alpha > 0$.  

%

\section{The distribution of the time and contemporaneous population size of the MRCA given $X(t) = x$}
\label{sec:MRCAgivenX}

\citet{burden2016genetic} and \citet{BurdenSoewongsoso19} set out to estimate the time back to the MRCA and population size at that time 
of a currently observed population $X(t) = x$ generated by a Feller diffusion.  Their approach, which consisted of setting the two quantities being estimated 
as initial conditions and calculating a joint likelihood function, required an additional conjecture and did not lead to fully analytic solutions.  
Details are summarised in \ref{sec:connectionBSandBS}.  The approach taken in this section avoids these shortcomings.  It 
considers a population initiated from a single ancestor at an earlier time $T_1 = t$ in the past, calculates the joint distribution of the time $T_2$ back to the MRCA and 
contemporaneous population size $X(t -T_2)$, and only then places prior assumptions on  $T_1$.  We begin with the following lemma.  

\begin{lemma}
For a Feller diffusion $\big(X(\tau)\big)_{\tau \in [0, t]}$ with parameter $\alpha \in \mathbb{R}$, 
descended from a single founder at time zero and conditioned on a current population $X(t) = x > 0$, 
the joint density of the time $T_2$ back to the MRCA of the current population and contemporaneous population size $X_{\rm MRCA} := X(t -T_2)$ is 
\begin{eqnarray}	\label{jointMRCAgivenT1}
f_{T_2, X_{\rm MRCA}}(s, z; t, x) & := & f_{T_2, X(t - T_2) \mid T_1 = t, X(t) = x}(s, z) \nonumber \\
	& = & \frac{1}{2} \frac{\mu(s; |\alpha|)^2\beta(t; |\alpha|)\mu(t - s; |\alpha|)}{\beta(s; |\alpha|)^2\mu(t; |\alpha|)\beta(t - s; |\alpha|)} 
								zxe^{-zU(t, s; |\alpha|)} e^{-x(\beta(s; |\alpha|)^{-1} - \beta(t; |\alpha|)^{-1})}, \nonumber \\
	& & 					\qquad\qquad\qquad\qquad\quad 0 < s < t; \quad z, x > 0, 
\end{eqnarray}
where $U(t, s)$ is defined by Eq.~(\ref{UtsDefn}).  
\end{lemma}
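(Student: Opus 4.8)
The plan is to condition on the contemporaneous population size at the MRCA time and to factor the joint density through the Markov property at the intermediate time $t-s$. Writing $\tau_0 = t-s$ and noting that on the event $\{T_2 \in (s, s + ds)\}$ the population $X(t - T_2)$ coincides with $X(t-s)$ to leading order, I would start from
\[
f_{T_2, X_{\rm MRCA}}(s, z; t, x) = f_{X(t-s) \mid T_1 = t, X(t) = x}(z) \times f_{T_2 \mid X(t-s) = z, T_1 = t, X(t) = x}(s).
\]
The first factor is then supplied verbatim by Theorem~\ref{theorem:pastPopn}, so that only the conditional density of the MRCA time given the population size at that time remains. As usual I would suppress the $|\alpha|$ argument and restore the change-of-sign symmetry in $\alpha$ at the end via Eq.~(\ref{mubetaIdentity4}).

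For the second factor I would split the event $\{T_2 = s\}$ according to the number of ancestors $A_\infty(s)$ at time $t-s$. Since the MRCA can sit at $\tau_0$ only when exactly one of the individuals alive at $\tau_0$ is ancestral to the current population, one has $\{T_2 = s\} \subseteq \{A_\infty(s) = 1\}$ and hence
\[
f_{T_2 \mid X(t-s) = z, \ldots}(s) = \mathbb{P}(A_\infty(s) = 1 \mid X(t-s) = z, X(t) = x) \times f_{T_2 \mid A_\infty(s) = 1, X(t-s) = z, X(t) = x}(s).
\]
The first piece follows from the family interpretation of the Feller solution Eq.~(\ref{Poisson_Gamma}) applied to the window $[t-s, t]$: conditioning on $X(t-s) = z$, the joint law of $(A_\infty(s), X(t))$ is governed by the $f_l(z, x; s)$ of Eq.~(\ref{flDef}), so this probability equals $f_1(z, x; s)/f(z, x; s)$.

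The crux is the remaining piece, the density that the single ancestral lineage is in fact the MRCA at the instant $\tau_0$. Here I would argue that, conditional on $A_\infty(s) = 1$, the descendants of that one ancestral individual over $[t-s, t]$ form precisely a single-founder Feller diffusion of duration $s$ conditioned on reaching $X(t) = x$, to which Theorem~\ref{theorem:jointCoalescentDistrib} applies directly. The event that this sub-diffusion's MRCA coincides with its founding instant is the boundary value, as the coalescent-time argument approaches the window length $s$, of the $k = 2$ marginal density of Eq.~(\ref{densityTkGivenx}); since the exponential factor tends to $1$ there, this boundary value equals the rate $x\mu(s)/(2\beta(s))$, notably independent of $z$. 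Assembling the three factors and simplifying with the $U$- and $\mu,\beta$-identities of \ref{sec:MuBetaIdentities} then yields Eq.~(\ref{jointMRCAgivenT1}), with $U(t,s)$ of Eq.~(\ref{UtsDefn}) appearing through the combination $\mu(s) + \beta(t-s)^{-1}$ in the exponent of $z$.

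The main obstacle is precisely this last step. The conditioning event $\{A_\infty(s) = 1\}$ itself depends on $s$, so one cannot simply differentiate a threshold; the subtlety is resolved by passing to the single-founder sub-genealogy, where the window endpoints are fixed and the required $2 \to 1$ coalescence intensity is read off as a boundary value of a density already computed. As a consistency check I would integrate the candidate answer over $z$ using $\int_0^\infty z\,e^{-zU(t,s)}\,dz = U(t,s)^{-2}$, which should collapse to the marginal $f_{T_2}$ of Theorem~\ref{theorem:jointCoalescentDistrib}. An equivalent route, more in the spirit of the earlier proofs, is to compute the numerator directly as $\lim_{x_0 \to 0}$ of the Markov-split joint density $f(x_0, z; t-s)$ times the window contribution, in which the same rate $x\mu(s)/(2\beta(s))$ emerges as the transition intensity of the final coalescence at time $t-s$.
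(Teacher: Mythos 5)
Your proposal is correct, and the three factors you identify multiply out to exactly the stated density: Theorem~\ref{theorem:pastPopn} gives $\frac{\mu(t-s)\beta(t)}{\beta(t-s)\mu(t)}e^{-z/\beta(t-s)}e^{x/\beta(t)}f(z,x;s)$, the Markov/Poisson--gamma step gives $f_1(z,x;s)/f(z,x;s)$, and with the boundary rate $x\mu(s)/(2\beta(s))$ the product collapses via $U(t,s)=\beta(t-s)^{-1}+\mu(s)$ to Eq.~(\ref{jointMRCAgivenT1}). The route, however, is genuinely different from the paper's. The paper introduces a two-time auxiliary function $H(t,s,u)=\mathbb{P}(X(t-s)\in dz,\,A_\infty(u)=1\mid X(t)=x,T_1=t)/dz$ with $u<s$, evaluates it by an explicit integral over the population at time $t-u$ (using Eqs.~(\ref{UIdentity1})--(\ref{UIdentity2})), and extracts the density of $T_2$ as $\partial H/\partial u|_{u=s}$; decoupling the observation time $t-s$ from the single-ancestor time $t-u$ is precisely the paper's device for the obstacle you name, that the event $\{A_\infty(s)=1\}$ moves with $s$. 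You resolve the same obstacle by conditioning on $X(t-s)=z$ and $A_\infty(s)=1$ and reading the terminal $2\to1$ intensity off as the continuous boundary value of the $k=2$ marginal (\ref{densityTkGivenx}) for the single-founder sub-process on $[t-s,t]$. Your version is more modular, reusing Theorems~\ref{theorem:jointCoalescentDistrib} and~\ref{theorem:pastPopn} rather than recomputing from scratch, but it leans on one step that deserves an explicit sentence: the claim that, given $A_\infty(s)=1$ and $X(t-s)=z$, the genealogy of the unique surviving family conditioned on its size $x$ is independent of $z$ and has the single-founder law of Theorem~\ref{theorem:jointCoalescentDistrib}. This follows from the i.i.d.\ family structure underlying Eq.~(\ref{Poisson_Gamma}) (the same exchangeability used in Lemma~\ref{lemma:probAnGivenAinf}), and once stated your argument is complete; your proposed consistency check of integrating out $z$ is the same one the paper records as a remark following the lemma.
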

\begin{figure}[t]
 \centering
 \includegraphics[width=0.4\linewidth]{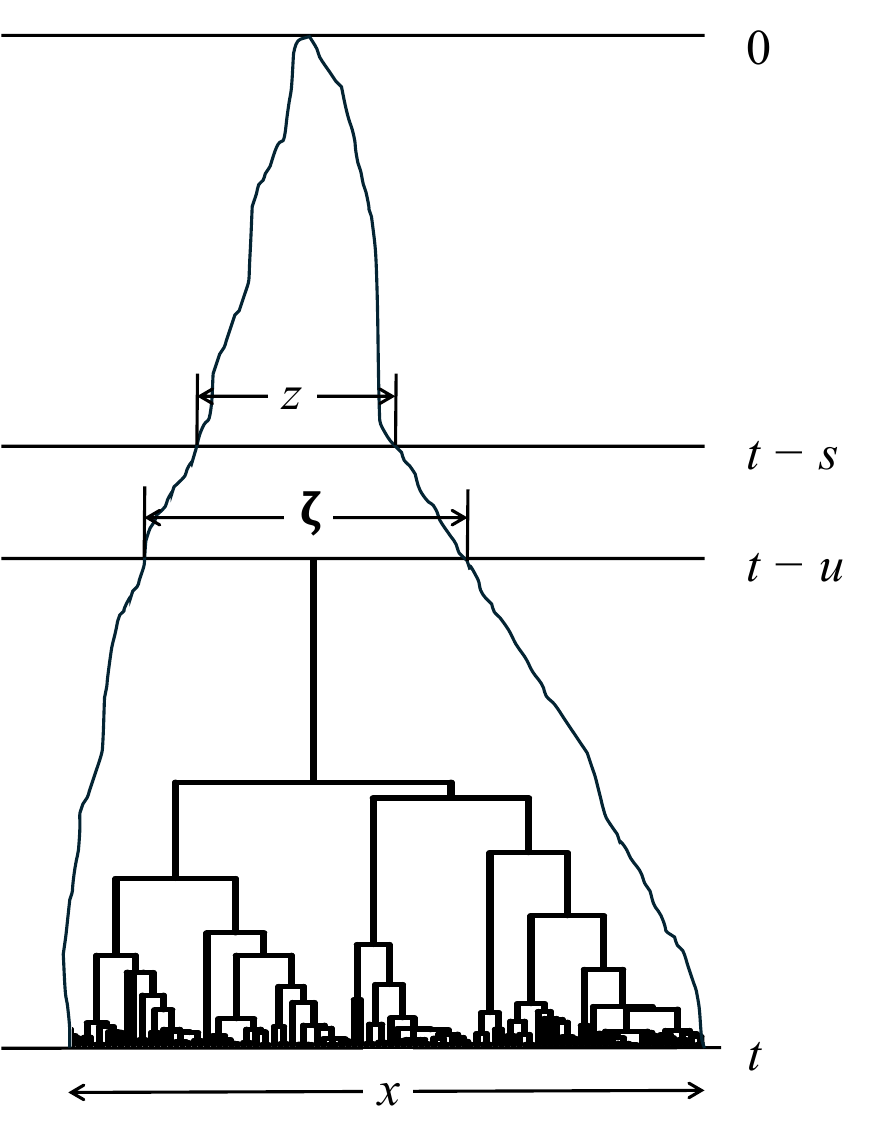}
 \caption{The function $H(t, s, u)$ is calculated by taking $X(0) = x_0 \to 0$ and integrating over the population $\zeta$ at time $t - u$.}
 \label{fig:MRCA_tree}
\end{figure}
\begin{proof}
For $0 < u < s < t$ and $x, z > 0$ define a function $H(t, s, u)$ by 
\[
H(t, s, u) dz = \mathbb{P}(X(t- s) \in (z, z + dz), A_\infty(u) = 1 \mid X(t) = x, T_1 = t).  
\]
This is the joint probability that, given a Feller process had a single founder at a time $t$ in the past and has a 
current population $x$,  the population size was $z$ at an intermediate time $s$ in the past, and at a more recent time $u$ in the past the current population 
had only one ancestor (see Fig.~\ref{fig:MRCA_tree}).  Recall that when conditioning on $X(t) > 0$, the $X(0) = x_0 \to 0$ limit of the Feller diffusion solution 
Eq.~(\ref{Poisson_Gamma}) captures the density due to a single founder.  Integrating over the population size at time $t - u$ gives,  
\[
H(t, s, u) = \lim_{x_0 \to 0} \frac{f(x_0, z; t - s) \int_0^\infty f(z, \zeta; s - u)f_1(\zeta, x, u) d\zeta}{f(x_0, x; t)}. 
\]
The $x_0$-dependent part is 
\begin{eqnarray*}
\lim_{x_0 \to 0} \frac{f(x_0, z; t - s)}{f(x_0, x; t)} & = & \lim_{x_0 \to 0} \frac{f_1(x_0, z; t - s)}{f_1(x_0, x; t)} \nonumber \\
	& = & \frac{\mu(t - s)}{\beta(t - s)} \frac{\beta(t)}{\mu(t)} e^{-z/\beta(t - s)} e^{x/\beta(t)}, 
\end{eqnarray*}
and the integral is 
\begin{eqnarray*}
\lefteqn{\int_0^\infty f(z, \zeta; s - u)f_1(\zeta, x, u) d\zeta} \nonumber \\
	& = & \int_0^\infty \sum_{l = 1}^\infty \frac{(z\mu(s - u)^l}{l!} e^{-z\mu(s - u)} \frac{\zeta^{l - 1}}{\beta(s - u)^l (l - 1)!} e^{-\zeta/\beta(s - u)} \nonumber \\
	&&			\qquad\qquad	\times \; \zeta\mu(u) e^{-\zeta\mu(u)} \frac{1}{\beta(u)} e^{-x/\beta(u)} d\zeta \nonumber \\
	& = & \sum_{l = 1}^\infty \frac{1}{(l - 1)!} \left(\frac{z\mu(s - u)}{\beta(s - u)} \right)^l \frac{\mu(u)}{\beta(u)} e^{-z\mu(s - u) - x/\beta(u)} 
					\frac{1}{l!} \int_0^\infty \zeta^l e^{-\zeta U(s, u)} d\zeta \nonumber \\  
	& = & \frac{\mu(u)}{\beta(u)}\frac{1}{U(s, u)^2}\frac{\mu(s - u)}{\beta(s - u)} z e^{-z\mu(s - u) - x/\beta(u)} 
	                                 \exp\left\{z\frac{\mu(s - u)}{\beta(s - u)} \frac{1}{U(s, u)} \right\} \nonumber \\
	& = & \frac{\mu(s)}{\beta(s)} z e^{-z\mu(s)} e^{- x/\beta(u)}, 
\end{eqnarray*}
using the identities Eqs.~(\ref{UIdentity1}) and (\ref{UIdentity2}) in the last line.  Then
\[
H(t, s, u) = \frac{\mu(s)\beta(t)\mu(t - s)}{\beta(s)\mu(t)\beta(t - s)} ze^{-zU(t, s)} e^{-x(\beta(u)^{-1} - \beta(t)^{-1})}. 
\]
The joint density of $T_2$ and $X(t - T_2)$ is obtained from 
\begin{eqnarray*}
\lefteqn{\mathbb{P}(X(t - s) \in (z, z + dz), T_2 \in (s - ds, s) \mid X(t) = x, T_1 = t)}  \nonumber \\
	& = & \mathbb{P}(X(t - s) \in (z, z + dz), A(s) = 1 \mid X(t) = x, T_1 = t) \nonumber \\
	&&		\quad - \;\mathbb{P}(X(t - s) \in (z, z + dz), A(s - ds) = 1 \mid X(t) = x, T_1 = t) \nonumber \\
	& = & (H(t, s, s) - H(t, s, s - ds)) dz \nonumber \\
	& = & \left. \frac{\partial H(t, s, u)}{\partial u} \right|_{u = s} ds dz \nonumber \\
	& = & \frac{1}{2} \frac{\mu(s)^2\beta(t)\mu(t - s)}{\beta(s)^2\mu(t)\beta(t - s)} zxe^{-zU(t, s)} e^{-x(\beta(s)^{-1} - \beta(t)^{-1})} dsdz,  
\end{eqnarray*} 
and Eq.~(\ref{jointMRCAgivenT1}) follows with help from Eqs.~(\ref{mubetaIdentity4}) and (\ref{UIdentity3}).  
\end{proof}
\begin{remark}
As a consistency check, integrate out $z$ from Eq.~(\ref{jointMRCAgivenT1}) and simplify with Eq.~(\ref{UIdentity2}) to give the marginal density of $T_2$, 
\[
f_{T_2 \mid T_1=t, X(t) = x} (s) = \frac{1}{2} \frac{\mu(s; |\alpha|)}{\beta(s; |\alpha|)} x e^{-x(\beta(s; |\alpha|)^{-1} - \beta(t; |\alpha|)^{-1})},
\]
which is consistent with Eq.~(\ref{densityTkGivenx}) with $k = 2$.  
\end{remark}

%
%
\subsection{$T_1 \to \infty$ limit of the population MRCA}	
\label{sec:TInfLimitMRCA}

As in Section~\ref{sec:TInfLimit} we consider the $T_1 \to \infty$ limit 
of the joint $(T_2, X_{\rm MRCA})$ distribution corresponding to a population with a single ancestral line stretching infinitely far into the past.  
\begin{theorem}
Consider a Feller diffusion $\big(X(\tau)\big)_{\tau \in [0, t]}$ with parameter $\alpha \in \mathbb{R}$ descended from a single founder at 
an infinite time in the past and 
conditioned on a current population $X(t) = x > 0$.  The limiting joint density of the time $T_2$ back to the MRCA of the current population and 
contemporaneous population size $X_{\rm MRCA}$ is 
\begin{eqnarray}	\label{jointMRCAinfT1}
f^{{\rm inf}\,T_1}_{T_2, X_{\rm MRCA}}(s, z; x) & := & \lim_{t \to \infty}f_{T_2, X_{\rm MRCA}}(s, z; t, x) \nonumber \\
	& = & \frac{\mu(s; |\alpha|)}{2\beta(s; |\alpha|)} xe^{-x/\beta(s; |\alpha|)} \times \mu(s; |\alpha|)^2 ze^{-z\mu(s; |\alpha|)} , \qquad s, z, x > 0. \nonumber \\
\end{eqnarray}
\end{theorem}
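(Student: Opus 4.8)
The plan is to derive the result purely by taking the $t \to \infty$ limit in the prelimit joint density of the preceding lemma, Eq.~(\ref{jointMRCAgivenT1}), one factor at a time, in exactly the same spirit as the proof of Theorem~\ref{theorem:infT1Distribs}. A convenient feature is that Eq.~(\ref{jointMRCAgivenT1}) is already written entirely in terms of $\mu(\cdot\,; |\alpha|)$, $\beta(\cdot\,; |\alpha|)$ and $U(\cdot\,; |\alpha|)$, so the computation needs no splitting into sub-, critical and supercritical cases (in contrast to Lemma~\ref{lemma:PosteriorT1}): the same asymptotics apply for every sign of $\alpha$.

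First I would record the large-$t$ behaviour of the building blocks. Since $\mu(t; |\alpha|) \to 2|\alpha|$ and $\beta(t; |\alpha|)^{-1} \to 0$, we get $\mu(t - s; |\alpha|)/\mu(t; |\alpha|) \to 1$, while the one nontrivial ratio is $\beta(t; |\alpha|)/\beta(t - s; |\alpha|) \to e^{|\alpha| s}$ (dividing numerator and denominator of $\beta$ by $e^{|\alpha|(t-s)}$ and discarding the vanishing $-1$ terms). Consequently the rational prefactor $\tfrac{1}{2}\,\mu(s)^2\beta(t)\mu(t-s)/\big(\beta(s)^2\mu(t)\beta(t-s)\big)$ converges to $\tfrac{1}{2}\,(\mu(s)^2/\beta(s)^2)\,e^{|\alpha| s}$. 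For the two exponentials I would use that $U(t, s; |\alpha|) = \mu(s; |\alpha|) + \beta(t - s; |\alpha|)^{-1}$ from Eq.~(\ref{UtsDefn}), so $e^{-zU(t,s)} \to e^{-z\mu(s)}$, and that $e^{-x(\beta(s)^{-1} - \beta(t)^{-1})} \to e^{-x/\beta(s)}$ because $\beta(t)^{-1} \to 0$. Assembling the pieces gives the limit $\tfrac{1}{2}\,(\mu(s)^2/\beta(s)^2)\,e^{|\alpha| s}\,zx\,e^{-z\mu(s)}e^{-x/\beta(s)}$.

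The one step that is not mere substitution — and the thing to get right — is recasting this into the factorised form stated in the theorem. Here I would invoke the identity $\mu(s; |\alpha|)\beta(s; |\alpha|) = e^{|\alpha| s}$ (see~\ref{sec:MuBetaIdentities}) to eliminate $e^{|\alpha| s}$, which converts $(\mu(s)^2/\beta(s)^2)\,e^{|\alpha| s}$ into $\mu(s)^3/\beta(s)$; regrouping then yields $\big(\tfrac{\mu(s)}{2\beta(s)}\,x\,e^{-x/\beta(s)}\big)\times\big(\mu(s)^2 z\,e^{-z\mu(s)}\big)$, which is the claim. As consistency checks I would note that the first factor is precisely the marginal density $f_{T_2}^{{\rm inf}\,T_1}(s; x)$ obtained from Eq.~(\ref{fTkInfT1}) at $k = 2$, and that the second factor integrates to one over $z > 0$ (a Gamma density of shape $2$ and rate $\mu(s; |\alpha|)$), so the limiting joint density is correctly normalised and agrees with the previously computed marginal of $T_2$.
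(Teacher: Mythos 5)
Your proposal is correct and follows essentially the same route as the paper: the paper's proof likewise records the limits $\mu(t-s;|\alpha|)/\mu(t;|\alpha|)\to 1$, $\beta(t;|\alpha|)/\beta(t-s;|\alpha|)\to e^{|\alpha|s}=\mu(s;|\alpha|)\beta(s;|\alpha|)$, $U(t,s;|\alpha|)\to\mu(s;|\alpha|)$ and $\beta(t;|\alpha|)^{-1}\to 0$, and then substitutes into Eq.~(\ref{jointMRCAgivenT1}). Your explicit regrouping via $\mu(s;|\alpha|)\beta(s;|\alpha|)=e^{|\alpha|s}$ and the normalisation/marginal consistency checks are just a more detailed rendering of the same argument.
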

\begin{proof}
For $|\alpha| \ge 0$, 
\[
\lim_{t \to \infty} \frac{\mu(t - s; |\alpha|)}{\mu(t; |\alpha|)} = 1, \quad \lim_{t \to \infty} \frac{\beta(t; |\alpha|)}{\beta(t - s; |\alpha|)} = e^{|\alpha| s} = \mu(s; |\alpha|)\beta(s; |\alpha|), 
\]
$\lim_{t \to \infty} U(t, s; |\alpha|) = \mu(s; |\alpha|) $ and $\lim_{t \to \infty}\beta(t; |\alpha|)^{-1} = 0$.  The result then follows from Eq.~(\ref{jointMRCAgivenT1}). 
\end{proof}

In the critical case, $\mu(s; 0) = \beta(s;0)^{-1} = 2/s$ yields 
\[
f^{{\rm inf}\,T_1}_{T_2, X_{\rm MRCA}}(s, z; x) = \frac{8}{s^4} zxe^{-2(x + z)/s}, \qquad s, z > 0; \quad \alpha = 0. 
\]
As pointed out in Subsection~\ref{sec:DiffusionLimits}, for non-critical diffusions the parameter $\alpha$ can always be scaled to 1.  Equivalently, numerical 
results can conveniently be presented in terms of dimensionless times $\alpha t$ and population sizes $\alpha x$, and joint densities rendered dimensionless 
by multiplication by $\alpha^{-2}$.  

\begin{figure}
 \centering
 \begin{minipage}{.5\textwidth}
   \centering
   \includegraphics[width=0.9\linewidth]{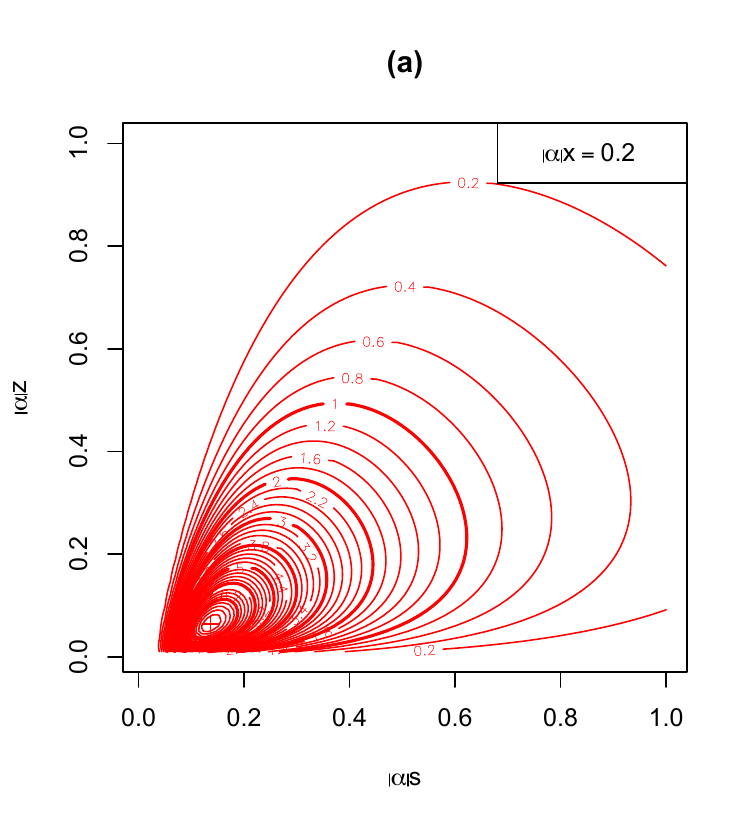}
  \end{minipage}%
 \begin{minipage}{.5\textwidth}
   \centering
   \includegraphics[width=0.9\linewidth]{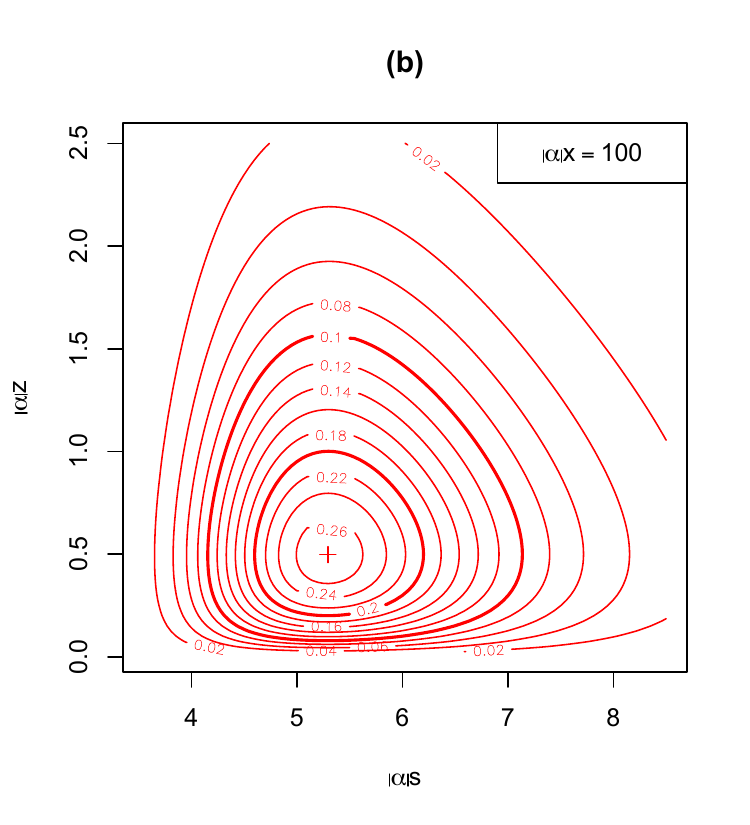}
  \end{minipage}
 \centering
 \begin{minipage}{.5\textwidth}
   \centering
   \includegraphics[width=0.9\linewidth]{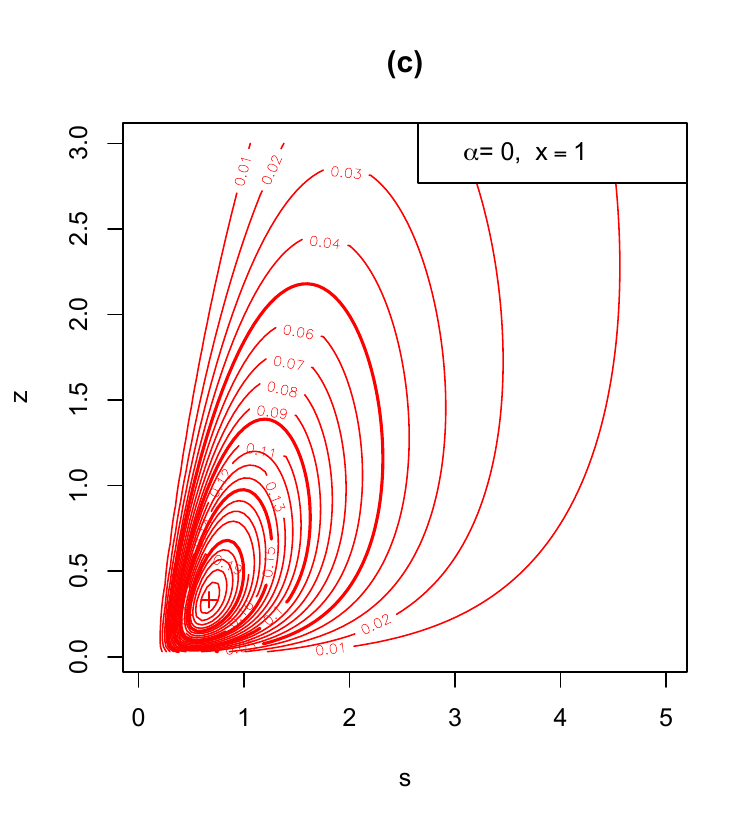}
  \end{minipage}%
 \begin{minipage}{.5\textwidth}
   \centering
   \includegraphics[width=0.9\linewidth]{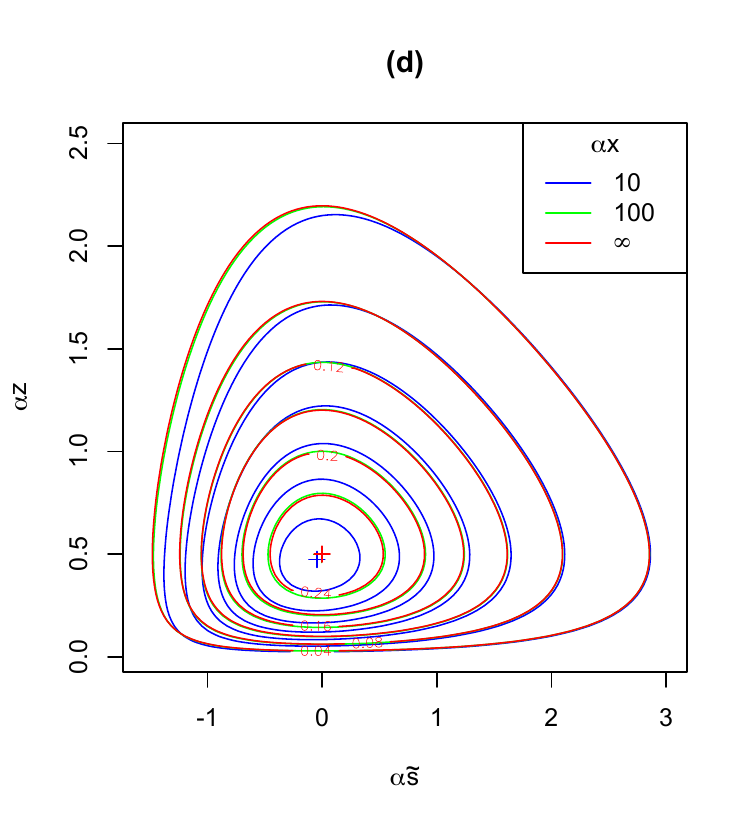}
  \end{minipage}
 \caption{
 Contour plots from the joint density Eq.~(\ref{jointMRCAinfT1}) of $(T_2, X_{\rm MRCA})$ 
 given that a population descended from a single founder at a time $T_1 \to \infty$ in the past has a current population~$x$. 
 (a, b) Contours of the dimensionless scaled density $\alpha^{-2} f^{{\rm inf}\,T_1}_{T_2, X_{\rm MRCA}}(s, z; x)$ for $|\alpha| x = 0.2$ 
 and $|\alpha| x = 100$ respectively; (c) contours for the critical case $\alpha = 0$, $x = 1$; (d) contours of the dimensionless  
 scaled density of $(\tilde{T}_2, X_{\rm MRCA})$ as defined by Eq.~(\ref{shiftedS}) for $\alpha x = 10, 100$ and $\infty$. 
 }
 \label{fig:T_2_Z_contours}
 \end{figure}

Computed contour plots of dimensionless scaled density from Eq.~(\ref{jointMRCAinfT1}) are shown in Fig.~\ref{fig:T_2_Z_contours}.  
These plots convey information about the MRCA not apparent in the simulations of Fig.~\ref{fig:InfT1Trees}.  The marginal distribution of $T_2$ gives 
an indication not only of the scale of the time since the MRCA, but also the uncertainty with which that time can be known.  Likewise the 
marginal distribution in $X_{\rm MRCA}$ gives an indication of the scale and uncertainty of the population at that time, all of whose descendent lines 
have become extinct except one individual.  In any application one is typically approximating an underlying BD or BGW process with parameters 
for the analogous Feller diffusion as described in Subsection~\ref{sec:DiffusionLimits}.  An example is given at the end of \ref{sec:MRCAfromBSandBS}
of a crude calculation of the time of mitochondrial Eve (mtE) and contemporaneous population size assuming an underlying BGW process.  
While calculations of the time since mtE based on branching processes have been made 
previously~\citep{o1995genealogy,zimmerman2001population,cyran2010alternatives}, the interesting innovation here is the measure of 
uncertainty inherent in such estimates.


\subsection{MRCA density with an improper uniform prior on $T_1$}

As in Section~\ref{sec:UnifT1} we consider the joint $(T_2, X_{\rm MRCA})$ distribution  for a Feller diffusion subject to an improper 
uniform prior on the time $T_1$ since initiation of the process with a single ancestral founder of the current population.  

\begin{theorem}
For a Feller diffusion $\big(X(\tau)\big)_{\tau \in [0, t]}$ with parameter $\alpha \in \mathbb{R}$, subject to an improper uniform prior on $T_1$ 
and conditioned on a current population $X(T_1) = x > 0$, the joint density of the time back to the MRCA of the current population and 
contemporaneous population size is 
\begin{eqnarray}	\label{jointMRCAUnifT1}
f^{{\rm unif}\,T_1}_{T_2, X_{\rm MRCA}}(s, z; x) & := &f^{{\rm unif}\,T_1}_{T_2, X_{\rm MRCA} \mid X(T_1) = x}(s, z) \nonumber \\
	& = & \frac{1}{2} \left(\frac{\mu(s; |\alpha|)}{\beta(s; |\alpha|)}\right)^2 x^2 e^{-x/\beta(s; |\alpha|)} e^{-z \mu(s; |\alpha|)}, \qquad s, z > 0.
\end{eqnarray}
\end{theorem}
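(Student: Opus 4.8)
The plan is to obtain the posterior joint density by marginalising the conditional joint density of Eq.~(\ref{jointMRCAgivenT1}) over the posterior distribution of $T_1$ supplied by Lemma~\ref{lemma:PosteriorT1}. Since conditioning on $T_1 = t$ and $X(t) = x$ fully determines the law of $(T_2, X_{\rm MRCA})$, and since $X(T_1) = x$ coincides with $X(t) = x$ when $T_1 = t$, the tower property gives
\[
f^{{\rm unif}\,T_1}_{T_2, X_{\rm MRCA}}(s, z; x) = \int_s^\infty f_{T_2, X_{\rm MRCA}}(s, z; t, x)\, f^{{\rm unif}\,T_1}_{T_1}(t; x)\, dt,
\]
where the lower limit is $s$ because the MRCA at $T_2 = s$ can only occur after the founding event, i.e.\ $s < T_1 = t$. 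First I would substitute Eq.~(\ref{jointMRCAgivenT1}) and Eq.~(\ref{uniformPosteriorT1}) into this integral and carry out the cancellations in the $t$-dependent factors: the two occurrences of $\mu(t; |\alpha|)$ cancel, as do the factors $e^{+x/\beta(t;|\alpha|)}$ and $e^{-x/\beta(t;|\alpha|)}$, leaving a prefactor $\tfrac14 (\mu(s;|\alpha|)/\beta(s;|\alpha|))^2\, z x^2 e^{-x/\beta(s;|\alpha|)}$ multiplying the integral $\int_s^\infty \frac{\mu(t-s;|\alpha|)}{\beta(t-s;|\alpha|)}\, e^{-zU(t,s;|\alpha|)}\, dt$.

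Next I would evaluate this integral. Recalling from the definition Eq.~(\ref{UtsDefn}) that $U(t,s;|\alpha|) = \mu(s;|\alpha|) + \beta(t-s;|\alpha|)^{-1}$ — the same factorisation $e^{-zU(t,s)} = e^{-z\mu(s)}e^{-z/\beta(t-s)}$ already exploited in the proof of Lemma~\ref{lemma:IntermediateAncestors} — I would pull the constant factor $e^{-z\mu(s;|\alpha|)}$ out of the integral and substitute $w = t - s$, reducing it to $e^{-z\mu(s;|\alpha|)}\int_0^\infty \frac{\mu(w;|\alpha|)}{\beta(w;|\alpha|)}\, e^{-z/\beta(w;|\alpha|)}\, dw$. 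This is exactly the integral computed in the proof of Lemma~\ref{lemma:PosteriorT1}: by Eq.~(\ref{mubetaIdentity1}) the integrand equals $\frac{2}{z}\frac{d}{dw} e^{-z/\beta(w;|\alpha|)}$, so it telescopes to $\frac{2}{z}\big[e^{-z/\beta(w;|\alpha|)}\big]_0^\infty = 2/z$, using $\beta(w;|\alpha|)^{-1} \to \infty$ as $w \to 0^+$ and $\beta(w;|\alpha|)^{-1} \to 0$ as $w \to \infty$. Combining this with the prefactor, the factors of $z$ cancel and Eq.~(\ref{jointMRCAUnifT1}) follows.

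I expect the only genuinely delicate point to be the convergence and boundary evaluation of the $w$-integral. The reason the answer is clean for every criticality class is that the argument $|\alpha|$ forces $\beta(w;|\alpha|)^{-1} \to 0$ as $w \to \infty$ regardless of the sign of $\alpha$, so the upper boundary term is $1$ in all cases; this is the same mechanism that made the normalisation in Lemma~\ref{lemma:PosteriorT1} independent of criticality, and I would cross-reference that computation rather than repeat it. The remaining algebra — the cancellation of $\mu(t;|\alpha|)$ and of the $e^{\pm x/\beta(t;|\alpha|)}$ pair — is routine bookkeeping once the integral is in hand.
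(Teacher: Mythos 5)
Your proof is correct and follows essentially the same route as the paper: marginalising Eq.~(\ref{jointMRCAgivenT1}) against the posterior density of Lemma~\ref{lemma:PosteriorT1} over $t \in (s, \infty)$, cancelling the $t$-dependent factors, and evaluating a telescoping integral via Eq.~(\ref{mubetaIdentity1}). The only (immaterial) difference is that you split $e^{-zU(t,s)}$ as $e^{-z\mu(s)}e^{-z/\beta(t-s)}$ and reuse the $2/z$ normalisation integral from Lemma~\ref{lemma:PosteriorT1}, whereas the paper uses the symmetric split $e^{-z/\beta(s)}e^{-z\mu(t-s)}$, obtains $e^{-2|\alpha|z}$, and recombines via $\mu(s;|\alpha|) = \beta(s;|\alpha|)^{-1} + 2|\alpha|$; both yield the same result.
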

\begin{proof}
From Eqs.~(\ref{uniformPosteriorT1}) and (\ref{jointMRCAgivenT1}), 
\begin{eqnarray*}
\lefteqn{f^{{\rm unif}\,T_1}_{T_2, X_{\rm MRCA}}(s, z; x) } \nonumber \\
	& = &\int_s^\infty f_{T_2, X_{\rm MRCA}}(s, z; t, x) \times f^{{\rm unif}\,T_1} _{T_1} (t; x)  \,dt \nonumber \\
	& = &\int_s^\infty \frac{1}{2} \frac{\mu(s; |\alpha|)^2\beta(t; |\alpha|)\mu(t - s; |\alpha|)}{\beta(s; |\alpha|)^2\mu(t; |\alpha|)\beta(t - s; |\alpha|)}
							zxe^{-zU(t, s; |\alpha|)} e^{-x(\beta(s; |\alpha|)^{-1} - \beta(t; |\alpha|)^{-1})}  \nonumber \\
	& &	\qquad\qquad\qquad\qquad\qquad\qquad\qquad\qquad	\times \frac{x}{2} \frac{\mu(t; |\alpha|)}{\beta(t; |\alpha|)} e^{-x/\beta(t; |\alpha|)} dt  \nonumber \\
	& = & \frac{1}{2} \left(\frac{\mu(s; |\alpha|)}{\beta(s; |\alpha|)}\right)^2 x^2 e^{-x/\beta(s; |\alpha|)} e^{-z/\beta(s; |\alpha|)} 
									\int_s^\infty \frac{z}{2} \frac{\mu(t - s; |\alpha|)}{\beta(t - s; |\alpha|)} e^{-z\mu(t - s; |\alpha|)} dt  \nonumber \\
	& = & \frac{1}{2} \left(\frac{\mu(s; |\alpha|)}{\beta(s; |\alpha|)}\right)^2 x^2 e^{-x/\beta(s; |\alpha|)} e^{-z \mu(s; |\alpha|)}, \qquad s, z > 0,
\end{eqnarray*}
using Eq.~(\ref{mubetaIdentity1}) in the last line.  
\end{proof}

Fig.~\ref{fig:T_2_Z_contours_unif} shows computed contour plots of this density. 

\begin{figure}
 \centering
 \begin{minipage}{.5\textwidth}
   \centering
   \includegraphics[width=0.9\linewidth]{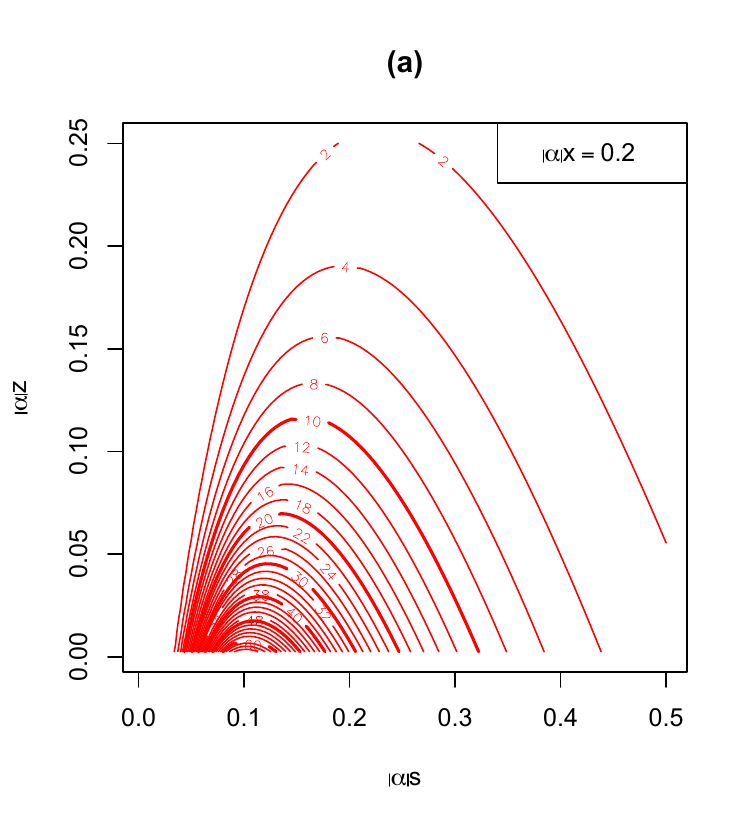}
  \end{minipage}%
 \begin{minipage}{.5\textwidth}
   \centering
   \includegraphics[width=0.9\linewidth]{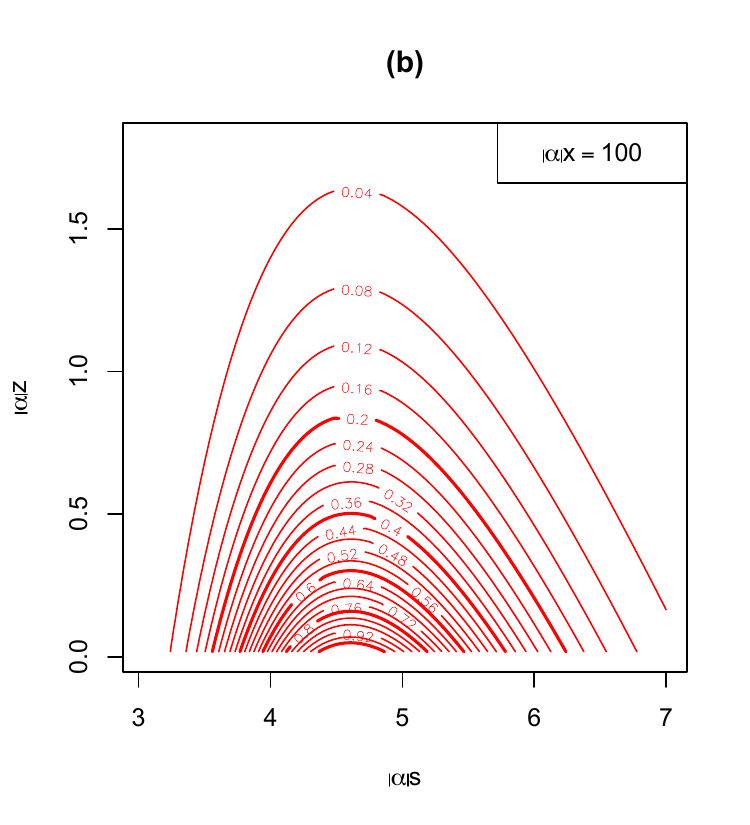}
  \end{minipage}
 \caption{Contours of the joint dimensionless scaled density from Eq.~(\ref{jointMRCAUnifT1}) of $(T_2, X_{\rm MRCA})$ subject to an improper 
uniform prior on the time $T_1$ since initiation of the process with a single ancestral founder of the current population~$X(T_1) = x$ for 
(a) $|\alpha|x = 0.2$ and (b) $ |\alpha| x = 100$.  }
 \label{fig:T_2_Z_contours_unif}
 \end{figure}

%

\section{Limiting distributions for a supercritical diffusion as $X(t) = x \to \infty$}
\label{sec:LargeXDistribs}

Heuristically, one expects any large-scale observation of a non-extinct supercritical Feller diffusion $X(t)$ satisfying 
\[
dX(t) = \sqrt{X(t)} dW(t) + \alpha X(t) dt
\]
for $\alpha > 0$ to resemble more closely a deterministic exponential growth proportional to $e^{\alpha t}$ as the second 
term on the right hand side dominates the stochastic variability from the first term for sufficiently large scaled populations $\alpha X(t) >> 1$.  
Numerical simulations of near-critical BGW processes initiated from a single founder and 
conditioned on survival in Fig.~\ref{fig:BGWsimulationLargeX} are consistent with such a scenario and suggest a long-term behaviour 
\[
\alpha X(t) \mid (A_\infty(t) = 1, X(t) > 0) \sim \tfrac{1}{2} e^{\alpha(t - C)}, 
\]
where $C$ is a random shift accounting for the time taken for the process to escape its early stochastic-dominated phase.  An {\em ad hoc} 
factor $\tfrac{1}{2}$ has been introduced here to simplify results below.\footnote{Moreover, one can show from Lemma~\ref{lemma:OConnell} 
that $\alpha C$ is asymptotically distributed as standard Gumbel.}  Here we examine the limiting behaviour of distributions of coalescent 
times conditioned on an observation $X(t) = x$ in the limit $x \to \infty$. 
\begin{figure}
 \centering
    \includegraphics[width=0.5\linewidth]{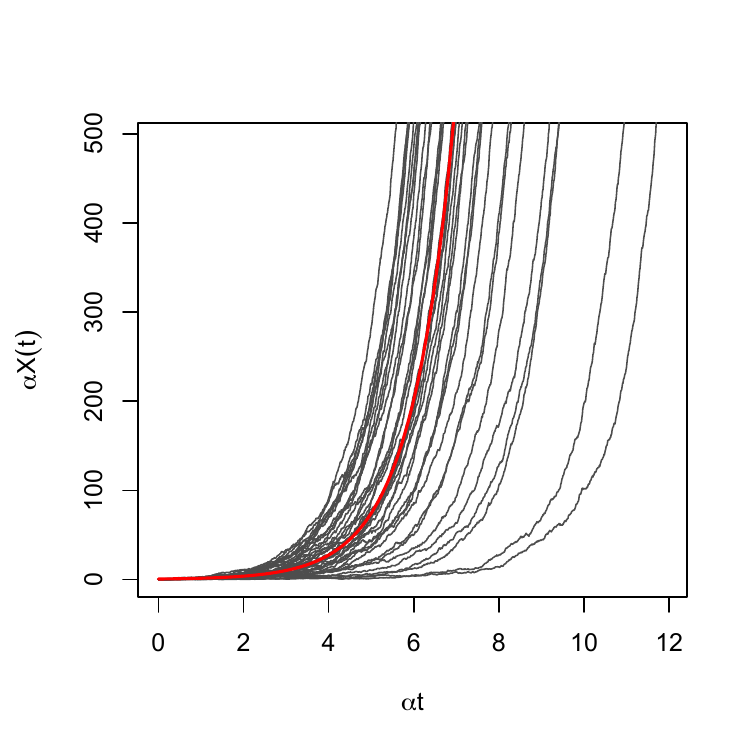}
 \caption{Independent simulations of a near-critical BGW process $\big(Y(i)\big)$ conditioned on long-term survival and given a single ancestral parent 
 $Y(0) =1$.  The number of offspring per parent is chosen as Poisson with mean and variance  $\lambda = \sigma^2 = 1.01$.  The axes 
 have been scaled to the diffusion limit continuum time $\alpha t$ and population size $\alpha X(t)$ using Eq.~(\ref{BGWscaling}). The red curve is 
 $\alpha X(t) = \tfrac{1}{2} e^{\alpha t}$.}  
 \label{fig:BGWsimulationLargeX}
 \end{figure}

{
Define a shifted time scale $\tilde{s}$ in the past and shifted coalescent times $\tilde{T}_k$ by the weak limit 
\begin{equation}	\label{shiftedS} 
\alpha \tilde{s} = \alpha s - \log 2\alpha x, \qquad \alpha \tilde{T}_k = \lim_{x \to \infty} \{(\alpha T_k \mid T_1 = t, X(t) = x) - \log 2\alpha x\},  
\end{equation}
where $s$ is the time back from the present time $t$, and coalescent times $T_k$ are defined in Subsection~\ref{sec:numberOfAncestors}.   
This shift has the effect of translating each of the trajectories in Fig.~\ref{fig:BGWsimulationLargeX} along the time axis so that it asymptotes to the 
red curve, thus capturing only the influence of the random variable $C$.  Eq.~(\ref{shiftedS}) leads to the following identities as $x \to \infty$,  
\[
\mu(s) = 2\alpha + x^{-1} e^{-\alpha\tilde{s}} + \mathcal{O}\left(x^{-2}\right), 
\]
\[
\beta(s) = xe^{\alpha\tilde{s}} - \frac{1}{2\alpha}, 
\]
and
\[
e^{-x/\beta(s)} = \exp(-e^{-\alpha\tilde{s}})\left(1 + \mathcal{O}(x^{-1})\right). 
\]
}  

Many of the theorems of Sections~\ref{sec:TInfLimit}, \ref{sec:UnifT1} and \ref{sec:MRCAgivenX} lead to limiting 
universal distributions describing properties of the coalescent behaviour of supercritical Feller diffusions given an observed limiting exponential 
growth of a large scaled population $\alpha X(t) >> 1$.  For example, the following theorem applies specifically to a Feller diffusion initialised 
at a time $T_1 = t \to \infty$ in the past.
\begin{theorem}	\label{theorem:LargeXDistribs}
Consider a supercritical Feller diffusion $\big(X(\tau)\big)_{\tau \in [0, t]}$ descended from a single founder at 
an infinite time in the past and conditioned on a current population $X(t) = x > 0$.  
Then in the limit $x \to \infty$, the marginal density of the shifted coalescent time $\tilde{T}_k$ is  
\[
f_{\tilde{T}_k}^{{\rm inf}\,T_1}(\tilde{s}) 
	= \frac{\alpha}{(k - 2)!} e^{-\alpha(k - 1)\tilde{s}} e^{-e^{-\alpha \tilde{s}}}, \qquad \tilde{s} \in \mathbb{R}, \quad k = 2, 3, \ldots, 
\]
the joint density of the shifted coalescent times $\tilde{T}_2, \ldots, \tilde{T}_k$ is  
\[
f_{\tilde{T}_2, \ldots, \tilde{T}_k}^{{\rm inf}\,T_1}(\tilde{s}_2, \ldots, \tilde{s}_k) =
	\left(\prod_{j = 2}^k \alpha e^{-\alpha \tilde{s}_j}\right) e^{-e^{-\alpha \tilde{s}_k}}, 
									\quad \infty > \tilde{s}_2 > \tilde{s}_3 > \ldots > \tilde{s}_k > -\infty, \quad k = 2, 3, \ldots,
\]
and the joint density of the shifted time $\tilde{T}_2$ back to the population MRCA and contemporaneous 
population size $X_{\rm MRCA}$ is 
\begin{equation} \label{jointT2tildeXMRCA}
f_{\tilde{T}_2,X_{\rm MRCA}}^{{\rm inf}\,T_1}(\tilde{s}, z) 
		= \alpha e^{-\alpha \tilde{s}} e^{-e^{-\alpha \tilde{s}}}\times 4\alpha^2ze^{-2\alpha z}, \qquad \tilde{s} \in \mathbb{R}, \quad z > 0.
\end{equation}
\end{theorem}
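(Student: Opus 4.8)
The plan is to derive all three limiting densities by one device: a purely additive change of time variable followed by the $x \to \infty$ limit, using the asymptotic expansions for $\mu(s)$, $\beta(s)$ and $e^{-x/\beta(s)}$ recorded immediately above the theorem. Because the shift $\alpha\tilde{s} = \alpha s - \log 2\alpha x$ of Eq.~(\ref{shiftedS}) is additive in the time coordinate, we have $ds = d\tilde{s}$ and the Jacobian is unity; hence the density of each shifted coalescent time is simply the corresponding unshifted density of Theorem~\ref{theorem:infT1Distribs}, re-expressed through $s = \tilde{s} + \alpha^{-1}\log 2\alpha x$, before the limit is taken. The three parts of the theorem follow from Eqs.~(\ref{fTkInfT1}), (\ref{joint_fTkInfT1}) and (\ref{jointMRCAinfT1}) respectively.

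For the marginal density of $\tilde{T}_k$ I would substitute the shift into Eq.~(\ref{fTkInfT1}). The asymptotic expansions give $x/\beta(s) \to e^{-\alpha\tilde{s}}$, $\mu(s) \to 2\alpha$ and $e^{-x/\beta(s)} \to \exp(-e^{-\alpha\tilde{s}})$, so that the prefactor $x\mu(s)/(2\beta(s)) \to \alpha e^{-\alpha\tilde{s}}$ and the power $\big(x/\beta(s)\big)^{k-2} \to e^{-\alpha(k-2)\tilde{s}}$. Multiplying the three pieces and the factor $1/(k-2)!$ yields $\tfrac{\alpha}{(k-2)!}\, e^{-\alpha(k-1)\tilde{s}}\, e^{-e^{-\alpha\tilde{s}}}$, as claimed. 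The admissible range $s > 0$ maps to $\tilde{s} > -\alpha^{-1}\log 2\alpha x \to -\infty$, so the limiting density is supported on all of $\mathbb{R}$.

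The joint coalescent density follows in exactly the same way from Eq.~(\ref{joint_fTkInfT1}): each factor $x\mu(s_j)/(2\beta(s_j))$ tends independently to $\alpha e^{-\alpha\tilde{s}_j}$, and the single exponential $e^{-x/\beta(s_k)}$ tends to $\exp(-e^{-\alpha\tilde{s}_k})$, giving the stated product on $\infty > \tilde{s}_2 > \cdots > \tilde{s}_k > -\infty$. For the joint MRCA density I would apply the same substitution to Eq.~(\ref{jointMRCAinfT1}), now observing that the contemporaneous population size $z$ is \emph{not} rescaled. The $\tilde{s}$-dependent factor $\tfrac{\mu(s)}{2\beta(s)}\, x e^{-x/\beta(s)}$ again limits to $\alpha e^{-\alpha\tilde{s}} e^{-e^{-\alpha\tilde{s}}}$, while the $z$-dependent factor $\mu(s)^2 z e^{-z\mu(s)}$ limits to $4\alpha^2 z e^{-2\alpha z}$ since $\mu(s) \to 2\alpha$; their product reproduces Eq.~(\ref{jointT2tildeXMRCA}).

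With the asymptotic expansions in hand the calculations are routine, so the only genuine subtlety I anticipate is upgrading this pointwise convergence of densities to the weak convergence demanded by the definition of $\tilde{T}_k$ in Eq.~(\ref{shiftedS}). I expect the main (minor) obstacle to be checking that each limiting density is a proper probability density and that a dominated-convergence or Scheff\'{e}-type argument applies, so that convergence of densities entails convergence in distribution of the shifted variables. As a confirming check, the substitution $w = e^{-\alpha\tilde{s}}$ gives $\int_{-\infty}^\infty \tfrac{\alpha}{(k-2)!} e^{-\alpha(k-1)\tilde{s}} e^{-e^{-\alpha\tilde{s}}}\, d\tilde{s} = \tfrac{1}{(k-2)!}\int_0^\infty w^{k-2} e^{-w}\, dw = 1$; in particular the $k=2$ marginal is the standard Gumbel density in $\alpha\tilde{s}$, matching the footnote on $\alpha C$.
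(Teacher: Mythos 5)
Your proposal is correct and follows essentially the same route as the paper: the paper's proof likewise obtains all three densities by substituting the asymptotic forms of $\mu(s)$, $\beta(s)$ and $e^{-x/\beta(s)}$ under the shift of Eq.~(\ref{shiftedS}) directly into Eqs.~(\ref{fTkInfT1}), (\ref{joint_fTkInfT1}) and (\ref{jointMRCAinfT1}). Your additional remarks on the unit Jacobian, the normalisation check via $w = e^{-\alpha\tilde{s}}$, and the Scheff\'{e}-type upgrade to weak convergence go slightly beyond what the paper records but are consistent with it.
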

 \begin{proof}
 The required results follow immediately from Eqs.~(\ref{fTkInfT1}), (\ref{joint_fTkInfT1}), and (\ref{jointMRCAinfT1})  respectively 
by making use of the above identities for the limiting asymptotic forms of the functions $\mu(s)$ and $\beta(s)$.  
\end{proof}
\begin{remark}
These results can in principle be extended to critical and subcritical diffusions by replacing $\alpha$ by $|\alpha|$.  However, such an extension 
is of little practical value as the probability of observing a large value of $\alpha x$ in these cases is exceedingly small.
\end{remark}

Note from Eq.~(\ref{jointT2tildeXMRCA}) that $\tilde{T}_2$ and $X_{\rm MRCA}$ are independent random variables. The limiting marginal distribution
of $\alpha\tilde{T}_2$ is standard Gumbel and the limiting marginal distribution of $2\alpha X_{\rm MRCA}$ is gamma with shape parameter 2 
and scale parameter 1. Figure~\ref{fig:T_2_Z_contours}(d) shows contours for the dimensionless scaled density of $(\tilde{T}_2, X_{\rm MRCA})$ for 
$\alpha x = 10, 100$ and $\infty$.  We have observed that the contours for $\alpha x > 500$ are visibly indistinguishable from those for the limiting density 
on the scale of this plot.  In \ref{sec:MRCAfromBSandBS} we compare an estimate of the time since mtE and 
contemporaneous population size using Eq.~(\ref{jointT2tildeXMRCA}) with a calculation due to \citet{burden2016genetic} using an alternative method 
based on likelihood calculations.  

Analogous results for coalescent times subject to an improper uniform prior on $T_1$ can be obtained in a similar fashion.  
\citet[Section~4.4]{Ignatieva20} obtain related results for the time back to the origin of a near-critical BD process in the limit of a large population.

%

\section{Conclusions}
\label{sec:Conclusions}

In this paper we have re-derived results from our previous paper~\citep{BurdenGriffiths24} and derived a number of new results pertaining to the coalescent 
structure of a Feller diffusion.  Our novel approach is based on directly interpreting Feller's solution to the forward Kolmogorov 
equation as a sum over the number of ancestral founders of the process (see Eq.~(\ref{Poisson_Gamma})).  

In particular we give new and more concise proofs in Section~\ref{sec:UnifT1} of the distributions of coalescent times and numbers of ancestors derived by 
\citet[theorem~4]{BurdenGriffiths24}.  The earlier results relied on the construction of a coalescent tree as the limit of a reversed reconstructed process 
associated with a BD process, and as such relied on imposing an improper uniform prior on the time of initiation of the process with a single ancestral 
progenitor.  Our new approach is broader in that it will in principle accommodate any prior assumption on initiation of the diffusion.  
In Sections~\ref{sec:singleAncestor} and \ref{sec:TInfLimit} respectively distributions based on an assumption of an initial founder at a fixed time in the past 
and infinitely far in the past are derived.  In Section~\ref{sec:improperUnifX0} an improper uniform prior on the 
initial population size at a fixed time in the past is assumed.  Results include the distribution of the number of ancestors at a given time in the past and 
the joint distribution of coalescent times for both the entire population and a random sample.  

Also derived in Section~\ref{sec:MRCAgivenX} are contour plots of the joint distribution of the time $T_2$ since the MRCA of a population and the population 
size $X_{\rm MRCA}$ at the time of that MRCA under assumptions of an individual founder at a specified time in the past, including infinitely far in the past, 
and of a uniform prior on the time since an individual founder.  
These calculations essentially answer a question posed by \cite{BurdenSoewongsoso19}, namely, the question of what can be known about the time and 
contemporaneous population size of the MRCA of a currently observed population generated by a Feller diffusion.   In general, the question will admit of a solution 
in the form of a posterior joint probability density over $T_2$ and $X_{\rm MRCA}$ given any well defined prior distribution on the time since the founding progenitor. 

For the most part, the theorems proved throughout the paper extend results to supercritical, critical and subcritical cases.  An interesting property of all 
theorems predicated on descent from a single founding ancestor and conditioned on a current observation $X(t) = x$ proved herein is 
that they exhibit symmetry between supercritical and subcritical cases.  That is to say, results depend on the growth rate parameter $\alpha$ only via $|\alpha|$.  
\citet{Maruyama74} makes a similar observation solving the backward Kolmogorov equation for the age of a rare mutation, and \citet[Section~6.1]{Wiuf18} 
and \citet{Stadler19} make similar observations for swapping birth and death rates in constant rate BD processes, 
including extending the symmetry properties to Bernoulli sampled populations.  
A supercritical diffusion which survives extinction will eventually tend towards unbounded exponential growth.  Asymptotic distributions describing the coalescence 
properties of this scenario conditioned on an observed population $X(t) = x$ are derived in Section~\ref{sec:LargeXDistribs}.  These distributions 
are universal up to a shift proportional to $\log \alpha x$.  

In future work we hope to apply knowledge of the distributional properties of coalescent trees to studying the transient behaviour of multi-type branching 
diffusions.  

%
\appendix

%
%

\section{Identities involving the functions $\mu(t; \alpha)$ and $\beta(t; \alpha)$}
\label{sec:MuBetaIdentities}

The following identities involving the functions $\mu(t; \alpha)$ and $\beta(t; \alpha)$ are used repeatedly throughout the paper.  
\begin{equation}	\label{mubetaIdentity1}
\mu(t)\beta(t) = e^{\alpha t}, \qquad \mu(t) - \frac{1}{\beta(t)} = 2\alpha, \qquad \frac{d}{dt} \mu(t) = \frac{d}{dt} \frac{1}{\beta(t)} = -\frac{1}{2} \frac{\mu(t)}{\beta(t)},  
\end{equation}
\begin{equation*}	
\left(\frac{1}{\beta(s)} - \frac{1}{\beta(t)}\right) \left(\frac{1}{\beta(t - s)} - \frac{1}{\beta(t)}\right) = \frac{\mu(t)}{\beta(t)}.
\end{equation*}
If 
\begin{equation}	\label{UtsDefn}
U(t, s) := \beta(t - s)^{-1} + \mu(s) = U(t, t - s), 
\end{equation}
then 
\begin{equation}	\label{UIdentity1}
\frac{\mu(s)}{\beta(s)}\frac{1}{U(t, s)} = \frac{1}{\beta(s)} - \frac{1}{\beta(t)} = \mu(s) - \mu(t), 
\end{equation}
and 
\begin{equation}	\label{UIdentity2}
\frac{\mu(s)}{\beta(s)}\frac{1}{U(t, s)^2}\frac{\mu(t - s)}{\beta(t - s)} = \frac{\mu(t)}{\beta(t)}. 
\end{equation}
The following symmetries are useful for relating supercritical and subcritical diffusions: 
 \begin{equation}	\label{mubetaIdentity3}
\mu(t; \alpha) \beta(t; -\alpha) = 1;
\end{equation}
\begin{equation}	\label{mubetaIdentity4}
\frac{\mu(t; \alpha)}{\beta(t; \alpha)} = \frac{\mu(t; |\alpha|)}{\beta(t; |\alpha|)}, \qquad 
	\frac{1}{\beta(s; \alpha)} - \frac{1}{\beta(t; \alpha)} = \frac{1}{\beta(s; |\alpha|)} - \frac{1}{\beta(t; |\alpha|)};  
\end{equation}
\begin{equation}	\label{UIdentity3}
U(t, s; \alpha) = U(t, s; |\alpha|).
\end{equation}

%
%

\section{Alternate forms of the transition density}\label{sec:Alternate_f}

An alternate form of Eq.~(\ref{Poisson_Gamma}) in terms of the modified Bessel function 
\[
I_\nu(z) = \left(\tfrac{1}{2}z\right)^\nu \sum_{k = 0}^\infty \frac{\left(\tfrac{1}{4}z^2\right)^k}{k! \Gamma(\nu + k + 1)} 
\]
is~\citetext{\citealp[Eq.~5.2]{feller1951diffusion}; \citealp[Eq.~6.2]{feller1951two}}
\begin{equation}	\label{modBesselDensity}
f(x_0, x; t) = e^{-x_0 \mu(t)} \delta(x) + e^{-x_0 \mu(t) - x/\beta(t)} \sqrt{\frac{x_0\mu(t)}{x\beta(t)}} I_1 \left( 2\sqrt{\frac{xx_0\mu(t)}{\beta(t)}}\right). 
\end{equation}

The transition density Eq.~(\ref{Poisson_Gamma}), or equivalently Eq.~(\ref{modBesselDensity}), can also be written in terms of 
the 1-parameter family of density functions 
\begin{eqnarray*}	
f_{\rm CPE}(z; \xi) & = & \delta(z) e^{-\xi} + \sum_{l = 1}^\infty \frac{e^{-\xi} \xi^l}{l!} \frac{1}{\Gamma(l)} \xi^l z^{l -1} e^{-\xi z}		\nonumber \\
	& = & \delta(z) e^{-\xi} + \xi e^{-\xi(1 + z)} \frac{I_1\left(2 \xi \sqrt z \right)}{\sqrt z}, \quad z \ge 0, \xi > 0.  
\end{eqnarray*}
This is the density of a compound Poisson exponential random variable 
\[
Z_\xi = \sum_{i = 1}^L U_i, 
\]
where $L$ is a Poisson random variable with mean $\xi$ and the $U_i$ are i.i.d.\ exponential random variables which are independent of $L$ and 
with common mean $\xi^{-1}$.  One easily checks that 
\begin{equation*}	\label{fInTermsOfCPE}
f(x_0, x; t) = \frac{1}{x_0 \mu(t) \beta(t)} f_{\rm CPE}\left(\frac{x}{x_0 \mu(t) \beta(t)}, x_0 \mu(t) \right).
\end{equation*}
Equivalently, 
\[
X(t) = x_0 \mu(t)\beta(t) Z_{x_0\mu(t)} = x_0 e^{\alpha t} Z_{x_0\mu(t)}.  
\]
In \citet[Eq.~(32) with a typographical error]{burden2018mutation} and \citet[Eq.~(24)]{BurdenSoewongsoso19}, the density $f_{\rm CPE}$ is referred to as $f_{\rm Feller}$.  

%
%

\section{Connection with \citet{o1995genealogy} and \citet{Harris20}}
\label{sec:connectionOC_HJR}

At the end of Section~\ref{sec:SamplingDistributions} it is noted that the density of the MRCA of a sample of size $n = 2$ without 
conditioning of $X(t) = x$, namely Eq.~(\ref{MRCAnEquals2}) has been found previously by \citet{o1995genealogy} and \cite{Harris20} 
by considering the near-critical limit of a BD process.  That their limiting process is identical to that defined by Eq.~(\ref{BDlimit}) in 
Section~\ref{sec:DiffusionLimits} is not immediately obvious.  Here we fill in the details.  It is easiest to begin with \cite{Harris20}.  

In the following we will append a subscript HJR to parameters whose name clashes with notation used within this appendix.   
\cite{Harris20} define the near critical limit of a BP defined on an interval $[0, T]$ with mean number $L$ of offspring per parent at 
each branching event 
\[
\mathbb{E}_T[L] = 1 + \frac{\mu}{T} + o\left(\frac{\mu}{T}\right), 
\]
by taking a limit $T \to \infty$ while holding $\mu$ fixed.  By contrast, in the current paper we define the BD on a fixed interval $[0, t]$, 
define birth and death rates by Eq.~(\ref{BDlimit}) and take the limit $\epsilon \to 0$ with $\alpha$ fixed.  The mean number of offspring in our treatment is 
\[
\mathbb{E}_\epsilon[L] = 2 \times \mathbb{P}(L = 2) + 0\times \mathbb{P}(L = 0) = \frac{2\hat{\lambda}(\epsilon)}{\hat{\lambda}(\epsilon) + \hat{\mu}(\epsilon)} 
							= 1 + \alpha\epsilon + \mathcal{O}(\epsilon^2), 
\]
Matching the two means gives 
\begin{equation} \label{muOverTScaling}
\frac{\mu}{T} = \alpha\epsilon + \mathcal{O}(\epsilon^2).  
\end{equation}

\cite{Harris20} also define a parameter $r = \alpha_{\rm HJR} + \beta_{\rm HJR}$, where $\alpha_{\rm HJR}$ and $\beta_{\rm HJR}$ are branching rates on 
the timescale of the interval $[0, T]$.  To preserve the near-critical limit  
$\mathbb{E}_T[L] = 2\beta_{\rm HJR}/r \to 1$, it must be that $\beta_{\rm HJR}/r, \alpha_{\rm HJR}/r \to \tfrac{1}{2}$ as $T \to \infty$.    
It  follows the $\lim_{T \to \infty}{\rm Var}_T(L) = 4 \alpha_{\rm HJR} \beta_{\rm HJR}/r^2 = 1$.  
It is straightforward to check from Eq.~(\ref{BDlimit}) that our diffusion limit also yields $\lim_{\epsilon \to 0}{\rm Var}_\epsilon(L) = 1$.  

In order to preserve the total probability of a branching event occuring over any interval in $[0, T]$ or the corresponding interval in $[0, t]$ we have 
\begin{equation}	\label{rTScaling}
rT = (\hat{\lambda}(\mu) + \hat{\mu}(\mu)) t = \frac{t}{\epsilon} + \mathcal{O}(\epsilon).  
\end{equation}
Since both limiting processes have the same offspring-number mean and variance, and making the usual assumption 
that all higher moments tend to zero in the diffusion limit, both ways of defining a near-critical limit lead to a Feller diffusion with parameter $\alpha$ 
providing the scaling laws Eqs.~(\ref{muOverTScaling}) and (\ref{rTScaling}) are respected.  

Coalescent times $\mathcal{S}^n_k(T),$ in \citet[p1371]{Harris20} for a sample of size $n$ are measured forward in time from initiation of the process.  
They are related to our coalescent times $T_k^{(n)}$ by (see Fig.~\ref{fig:CoalescentTree}) 
\[
\frac{\mathcal{S}^n_k(T)}{T} = t - T_{k + 1}^{(n)}, \qquad  k = 1, \ldots, n -1.  
\]
In particular, \cite{Harris20} quote O'Connell's distribution of split times $\mathcal{S} = \mathcal{S}^2_1$ for a sample of size $n = 2$ as 
\begin{eqnarray*}
\lefteqn{\lim_{T \to \infty} \mathbb{P}_T\left(\frac{\mathcal{S}(T)}{T} \ge s_{\rm HJR} \mid N_T \ge 2 \right)} \nonumber \\
	& = & 2\left(\frac{e^{r\mu(1 - s_{\rm HJR})} -1}{e^{r\mu(1 - s_{\rm HJR})} - e^{r\mu}}\right)
 +  2\frac{(e^{r\mu} -1)(e^{r\mu(1 - s_{\rm HJR})} - 1)}{(e^{r\mu(1 - s_{\rm HJR})} - e^{r\mu})^2} \log\left(\frac{e^{r\mu} - 1}{e^{r\mu(1 - s_{\rm HJR})} - 1}\right). 
\end{eqnarray*}
This is seen to be equivalent to Eq.~(\ref{MRCAnEquals2}) using the correspondences in Table~\ref{tab:HOparameters}, which follow from 
Eqs.~(\ref{muOverTScaling})) and (\ref{rTScaling}).  The third column in Table~\ref{tab:HOparameters} is obtained by matching the notation of 
\citet{Harris20} with that of \citet[Section~2]{o1995genealogy}.  The equivalence of Eq.~(\ref{MRCAnEquals2}) and \citet[Theorem~2.3 with $x=1$]{o1995genealogy} 
follows.  

\begin{table}[htbp]
   \centering
   \caption{Matching the notation of the current paper with that of \cite{Harris20} and \citet{o1995genealogy}.  Note that our parameter $s \in [0, t]$ is measured back from the 
   current time as in Fig.~\ref{fig:CoalescentTree}, whereas the parameter $s_{\rm HJR} \in [0, 1]$ of \cite{Harris20} or $r \in [0, 1]$ of \citet{o1995genealogy} is measured forward from 
   initiation of the process.}
   \begin{center}
   \begin{tabular}{lll} 
      \hline
	Current paper		&  \cite{Harris20}	& \citet{o1995genealogy}	\\
      \hline
	$\alpha t$			& $r\mu$			& $\alpha$ 			\\
	$\alpha s$			& $r\mu(1 - s_{\rm HJR})$		& $\alpha(1 - r)$ 		\\
	$\dfrac{\beta(s)}{\beta(t)} = \dfrac{e^{\alpha s} - 1}{e^{\alpha t} - 1}$ & $\dfrac{e^{r\mu(1 - s_{\rm HJR})} - 1}{e^{r\mu} - 1}$
							& $q_r = \dfrac{e^{-\alpha r} - e^{-\alpha}}{1 - e^{-\alpha}}$	\\
      \hline
   \end{tabular}
    \label{tab:HOparameters}
    \end{center}
\end{table}

%
%

\section{Connection with \citet{burden2016genetic} and \citet{BurdenSoewongsoso19}}
\label{sec:connectionBSandBS}

\citet{burden2016genetic} and \citet{BurdenSoewongsoso19} address problems closely related to certain topics covered in the current paper.  
Firstly, they derive probabilities that Feller diffusions conditioned on both their initial and final populations have only a single ancestor 
in the initial population.  In \ref{sec:ancestorsGiven_x0andx} below we give proofs of their theorems which are more concise, and which extend results 
to any number of initial ancestors.  Secondly, they apply their results to the problem of estimating the time of the MRCA and contemporaneous 
population size given a current population.  In \ref{sec:MRCAfromBSandBS} we compare their methods with those of Sections~\ref{sec:MRCAgivenX} 
and \ref{sec:LargeXDistribs} of the current paper.  To convert from the notation in the two earlier papers to that of the current paper, for the Feller diffusion process 
stated in the notation of the current paper as $\big(X(\tau)\big)_{\tau \in [0, t]}$ with $X(0) = x_0$, $X(t) = x$ for instance, make the substitutions 
\begin{equation*}
K(s) \leadsto 2\alpha X(t), \quad \kappa \leadsto 2\alpha x, \quad \kappa_0 \leadsto 2\alpha x_0, \quad s \leadsto \alpha t.  
\end{equation*}

%

\subsection{Distributions of the number of ancestors given $X(0)$ and $X(t)$}
\label{sec:ancestorsGiven_x0andx}

\begin{theorem}
For a Feller diffusion $\big(X(\tau)\big)_{\tau \in [0, t]}$ conditioned on $X(0) = x_0 > 0$ and $X(t) = x > 0$, the probability that the final population has 
$l$ initial ancestors is 
\begin{equation}	\label{ancestorsGivenX0X}
\mathbb{P}(A_\infty(t)  = l \mid X(0) = x_0, X(t) = x) = \frac{1}{l!(l - 1)!} \frac{w^{2l - 1}}{I_1(2w)}, \qquad l = 1, 2, \ldots, 
\end{equation}
and the probability that an i.i.d.\ sample of size $n$ of the final population has $k$ initial ancestors is 
\begin{equation}	\label{sampleGivenX0X}
\mathbb{P}(A_n(t)  = k \mid X(0) = x_0, X(t) = x) = \frac{n!}{k!} \binom{n - 1}{k - 1}\frac{1}{w^{n - k}} \frac{I_{k + n - 1}(2w)}{I_1(2w)}, \qquad k = 1, 2, \ldots n, 
\end{equation}
where 
\begin{equation*}	
w := \sqrt{\frac{xx_0\mu(t)}{\beta(t)}} = \frac{\alpha \sqrt{xx_0}}{\sinh \frac{1}{2}\alpha t}. 
\end{equation*}
\end{theorem}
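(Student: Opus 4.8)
The plan is to establish Eq.~(\ref{ancestorsGivenX0X}) first by a direct application of Bayes' rule, and then to obtain Eq.~(\ref{sampleGivenX0X}) from it by conditioning on the number of population ancestors and invoking Lemma~\ref{lemma:probAnGivenAinf}.

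For the population statement, I would start from the joint law recorded in Section~\ref{sec:numberOfAncestors}, namely $\mathbb{P}(A_\infty(t) = l, X(t) \in (x, x+dx) \mid X(0) = x_0) = f_l(x_0, x; t)\,dx$, so that
\[
\mathbb{P}(A_\infty(t) = l \mid X(0) = x_0, X(t) = x) = \frac{f_l(x_0, x; t)}{f(x_0, x; t)}.
\]
For $x > 0$ the Dirac term is absent, and I would write the denominator in the modified Bessel form Eq.~(\ref{modBesselDensity}) and the numerator using Eq.~(\ref{flDef}). The exponentials $e^{-x_0\mu(t)}$ and $e^{-x/\beta(t)}$ cancel between numerator and denominator; collecting the surviving factors of $x_0\mu(t)$, $x$ and $\beta(t)^{-1}$ into $w^2 = x x_0 \mu(t)/\beta(t)$, and using $\sqrt{x_0\mu(t)/(x\beta(t))} = w/x$, reduces the ratio to $w^{2l-1}/[l!(l-1)!\,I_1(2w)]$, which is Eq.~(\ref{ancestorsGivenX0X}). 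As a check that this is a probability distribution, the identity $\sum_{l \ge 1} w^{2l-1}/[l!(l-1)!] = I_1(2w)$ follows from the Bessel series in~\ref{sec:Alternate_f}, so the masses sum to one.

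For the sample statement, since $A_n(t)$ counts ancestors of the sample at time $0$, I would condition on $A_\infty(t) = l$ and sum,
\[
\mathbb{P}(A_n(t) = k \mid X(0) = x_0, X(t) = x) = \sum_{l = k}^\infty \mathbb{P}(A_n(t) = k \mid A_\infty(t) = l)\,\mathbb{P}(A_\infty(t) = l \mid X(0) = x_0, X(t) = x),
\]
taking the first factor from Eq.~(\ref{probAnGivenAinf}) with $s = t$ (which is independent of $X(0)$ and $X(t)$) and the second from Eq.~(\ref{ancestorsGivenX0X}). Writing $\binom{l}{k} = l!/[k!(l-k)!]$ and the rising factorial as $l_{(n)} = (l+n-1)!/(l-1)!$, the factors $l!$ and $(l-1)!$ cancel against those in the denominator of Eq.~(\ref{ancestorsGivenX0X}), leaving
\[
\frac{n!}{k!}\binom{n-1}{k-1}\frac{1}{I_1(2w)} \sum_{l = k}^\infty \frac{w^{2l-1}}{(l-k)!\,(l+n-1)!}.
\]

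The crux is then the resummation of this series. Shifting the index to $m = l - k$ turns it into $w^{2k-1}\sum_{m \ge 0} w^{2m}/[m!\,(m+k+n-1)!]$, which I recognise from the Bessel series of~\ref{sec:Alternate_f}, taken at order $\nu = k+n-1$ and argument $z = 2w$, as $w^{2k-1}\,w^{-(k+n-1)} I_{k+n-1}(2w) = w^{k-n} I_{k+n-1}(2w)$; this yields Eq.~(\ref{sampleGivenX0X}). I expect the only real obstacle to be the factorial bookkeeping — pairing the shifted summation index with the correct Bessel order $k+n-1$ — since every other step is a substitution. Finally, the closed form $w = \alpha\sqrt{xx_0}/\sinh(\tfrac12\alpha t)$ follows from $\mu(t)/\beta(t) = \alpha^2/\sinh^2(\tfrac12\alpha t)$, obtained by substituting $e^{\alpha t} - 1 = 2 e^{\alpha t/2}\sinh(\tfrac12\alpha t)$ into Eq.~(\ref{mubetaDef}).
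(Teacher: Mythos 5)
Your proposal is correct and follows essentially the same route as the paper: Bayes' rule with the ratio $f_l(x_0,x;t)/f(x_0,x;t)$ in the modified Bessel form for the population result, then conditioning on $A_\infty(t)=l$ via Lemma~\ref{lemma:probAnGivenAinf} and resumming the resulting series as $I_{k+n-1}(2w)$ for the sample result. All the factorial bookkeeping and the Bessel-series identification check out against the paper's own computation.
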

\begin{proof}
Conditional on an initial population $x_0 > 0$ and a final population $x > 0$, the distribution of the number of initial ancestors 
at time 0 of the final population is, from Eqs.~(\ref{flDef}) and (\ref{modBesselDensity}) 
\begin{eqnarray*}
\mathbb{P}(A_\infty(t)  = l \mid X(0) = x_0, X(t) = x)
	& = & \frac{\mathbb{P}(A_\infty(t)  = l, X(t) \in(x, x + dx) \mid X(0) = x_0)}{\mathbb{P}(X(t) \in(x, x + dx) \mid X(0) = x_0)}  \nonumber \\
 	& = & \frac{f_l(x_0, x; t)}{f(x_0, x; t)} \nonumber \\
	& = &  \frac{\frac{(x_0 \mu(t))^l}{l!} \frac{x^{l - 1}}{\beta(t)^l (l - 1)!}}
				{\sqrt{\frac{x_0\mu(t)}{x\beta(t)}} I_1 \left( 2\sqrt{\frac{xx_0\mu(t)}{\beta(t)}}\right)} \nonumber \\
	& = & \frac{1}{l!(l - 1)!} \frac{w^{2l - 1}}{I_1(2w)}.  
\end{eqnarray*} 

From Lemma~\ref{lemma:probAnGivenAinf} and for $k \in \{1, \ldots, n\}$ we have the joint probability 
\begin{eqnarray*}
\lefteqn{\mathbb{P}(A_n(t) = k, X(t) \in(x, x + dx) \mid X(0) = x_0) } \\
	& = & \sum_{l = k}^\infty \mathbb{P}(A_n(t) = k \mid A_\infty(t) = l)   \\
	& & \qquad\qquad\qquad\qquad \times \, \mathbb{P}(A_\infty(t) = l, X(t) \in(x, x + dx) \mid X(0) = x_0) \\
	& = & \sum_{l = k}^\infty \binom{l}{k} \frac{n!}{l_{(n)}} \binom{n - 1}{k - 1} \times 
							f_l(x_0, x; t) dx. 
\end{eqnarray*}
Then in a sample of size $n$ at time $t$, the probability that the sample has $1 \le k \le n$ ancestors at time 0 given an initial 
population $x_0 > 0$ and final population $x > 0$ is 
\begin{eqnarray*}	
\lefteqn{\mathbb{P}(A_n(t) = k \mid X(0) = x_0, X(0) = x) } \nonumber \\
	& = & \frac{\mathbb{P}(A_n(t, t) = k, X(t) \in(x, x + dx) \mid X(0) = x_0)}
			{\mathbb{P}(X(t) \in(x, x + dx) \mid X(0) = x_0) } \nonumber \\
	& = & \frac{\sum_{l = k}^\infty \binom{l}{k} \frac{n!}{l_{(n)}} \binom{n - 1}{k - 1} f_l(x_0, x; t)} {f(x_0, x; t)} \nonumber \\
	& = & \frac{\sum_{l = k}^\infty \binom{l}{k} \frac{n!}{l_{(n)}} \binom{n - 1}{k - 1}\frac{(x_0 \mu(t))^l}{l!} \frac{x^{l - 1}}{\beta(t)^l (l - 1)!}}
							{\sqrt{\frac{x_0\mu(t)}{x\beta(t)}} I_1 \left( 2\sqrt{\frac{xx_0\mu(t)}{\beta(t)}}\right)} \nonumber \\ 
	& = & \frac{n!}{k!} \frac{1}{I_1(2w)} \binom{n - 1}{k - 1} \sum_{l = k}^\infty \frac{w^{2l - 1}}{(l - k)! (l + n - 1)!} \nonumber \\
	& = & \frac{n!}{k!} \frac{1}{I_1(2w)} \binom{n - 1}{k - 1}w^{2k - 1} \sum_{j = 0}^\infty \frac{w^{2j}}{j!(k + n - 1 + j)!} \nonumber \\
	& = & \frac{n!}{k!} \binom{n - 1}{k - 1}\frac{1}{w^{n - k}} \frac{I_{k + n - 1}(2w)}{I_1(2w)}.   
\end{eqnarray*}
\end{proof}

Setting $l = 1$ in Eq.~(\ref{ancestorsGivenX0X}), the probability that the final population is descended from a single ancestor is 
\begin{equation}	\label{1AncestorGivenX0X}
\mathbb{P}(A_\infty(t)  = 1 \mid X(0) = x_0, X(t) = x) = \frac{w}{I_1(2w)},  
\end{equation}
agreeing with \citet[Eq.~(44)]{burden2016genetic} or \citet[Corollary~1]{BurdenSoewongsoso19}.  
Setting $k = 1$ in Eq.~(\ref{sampleGivenX0X}), the probability that a sample of size $n$ from the final population is descended from a single ancestor is 
\[
\mathbb{P}(A_n(t)  = 1 \mid X(0) = x_0, X(t) = x) = \frac{n!I_n(2w)}{w^{n - 1}I_1(2w)}.  
\]
This is \citet[Theorem~1]{BurdenSoewongsoso19}.  Results for $l > 1$ and $k > 1$ respectively are new results.  
Note also that, from the ascending series $I_m(2w) = (w^m/m!)(1 + \mathcal{O}(\tfrac{1}{m}))$ as 
$m \to \infty$ \citep[p375]{Abramowitz:1965sf}, one can recover Eq.~(\ref{ancestorsGivenX0X}) from Eq.~(\ref{sampleGivenX0X}).  

%

\subsection{MRCA given the current population}
\label{sec:MRCAfromBSandBS}

Instead of setting a prior distribution on the initial population $X(0)$ and calculating a posterior joint density for the initial population and time since the MRCA, 
\cite{BurdenSoewongsoso19} treat the initial population size $z$ and time $s$ since initiation of the process at a time coincident with the MRCA of the current 
population as parameters to be estimated.  Consider the process $\big(X(\tau)\big)_{\tau \in [0, s]}$.  Given an observed current population $X(s) = x$, 
they set out to determine a likelihood surface $L(s, z \mid x)$ with the property that any given subset $\Omega \subseteq \{(s, z) : s, z > 0\}$ has a probability  
\[
\mathbb{P}((s^{\rm true}, z^{\rm true}) \in \Omega) = \int_\Omega L(s, z \mid x) ds dz , 
\]
of containing the true initial conditions $(s^{\rm true}, z^{\rm true})$.

In order to model the surface, two functions of the $(s, z)$-plane are defined.  Firstly, the probability that the time since the MRCA of the process 
occurs subsequent to initiation is, from Eq.~(\ref{1AncestorGivenX0X}),   
\[
u(s, z \mid x) := \mathbb{P}(A_\infty(s) = 1 \mid X(0) = z, X(s) = x) = \frac{w}{I_1(2w)}, 
\]
where $w = \big(xz\mu(s)/\beta(s)\big)^{1/2}$.  Secondly, the probability that the current population will not exceed the observed value $x$ is 
\[
v(s, z \mid x) := \mathbb{P}(X(s) \le x \mid X(0) = z) = \int_0^x f(z, \xi; s) d\xi, 
\]
where $f(z, \xi; s)$ is the density defined in Eqs.~(\ref{Poisson_Gamma}) or (\ref{modBesselDensity}).  
By considering a mapping of the $(s, z)$-plane to the $(u, v)$-plane, \citet[Figure~2 and Proposition~1]{BurdenSoewongsoso19} argue that for 
fixed values of $z$ and $x$ the subset  
\[
\mathcal{I}_s(u_1, u_2) := \{(s, z): u_1 < u(s, z \mid x) < u_2\}, \qquad 0 \le u_1 < u_2 \le 1, 
\]
of the $(s, z)$-plane has a probability $u_2 - u_1$ of containing the time $s^{\rm true}$ since the MRCA, and that for fixed values of $s$ and $x$ the subset 
\[
\mathcal{I}_z(v_1, v_2) := \{(s, z): v_1 < v(s, z \mid x) < v_2\},  \qquad 0 \le v_1 < v_2 \le 1, 
\]
has a probability $v_2 - v_1$ of containing initial population $z^{\rm true}$.  Thus the mapping induces likelihood surface $L_{UV}(u, v \mid x)$ 
for $(u, v) \in [0, 1] \times [0, 1]$ via  
\[
L(s, z \mid x) = \left| \frac{\partial u}{\partial s}\frac{\partial v}{\partial z} - \frac{\partial u}{\partial z}\frac{\partial v}{\partial s} \right| L_{UV}(u, v \mid x), 
\]
satisfying $\int_0^1\int_0^{u_2} L_{UV}(u, v \mid x) dudv = u_2$ and $\int_0^{v_2}\int_0^1 L_{UV}(u, v \mid x) dudv = v_2$. 
The density $L_{UV}(u, v \mid x)$ is that of a 2-dimensional copula, that is, a density on the unit square $[0, 1] \times [0, 1]$ whose marginals are each 
uniform distributions~\citep[Section~2]{Nelsen:2006aa}.   

The above procedure does not uniquely define the desired likelihood surface $L(s, z \mid x)$.  To progress beyond this point 
\cite{BurdenSoewongsoso19} employ the Ansatz $L_{UV}(u, v \mid x) \equiv 1$, which they state as a conjecture that the random intervals 
$\mathcal{I}_s(u_1, u_2)$ and $\mathcal{I}_z(v_1, v_2)$ are independent.  Here we compare their numerical results with the shifted posterior joint density  
of $(\tilde{T}_2, X_{\rm MRCA})$ obtained in Section~\ref{sec:LargeXDistribs} under the prior assumption $T_1 \to \infty$ of a single founder infinitely far 
in the past and in the limit $x \to \infty$.  In order to make the comparison, consistent with Eq.~(\ref{shiftedS}) define 
\[	
\begin{split}
\tilde{u}(\tilde{s}, z) &= \lim_{x \to \infty} u(\tilde{s} + \alpha^{-1}\log 2\alpha x, z \mid x); \\
\tilde{v}(\tilde{s}, z) &= \lim_{x \to \infty} v(\tilde{s} + \alpha^{-1}\log 2\alpha x, z \mid x); \\
\tilde{L}(\tilde{s}, z) &= \lim_{x \to \infty} L(\tilde{s} + \alpha^{-1}\log 2\alpha x, z \mid x).  
\end{split}
\]
Contours of $\tilde{u}$, $\tilde{v}$ and $\tilde{L}$ scaled to dimensionless units are plotted in Fig.~\ref{fig:contourJacobianLikelihood}, together with 
the posterior density $f_{\tilde{T}_2,X_{\rm MRCA}}^{{\rm inf}\,T_1}(\tilde{s}, z)$ from Eq.~(\ref{jointT2tildeXMRCA}) similarly scaled.  We see that 
the heuristic procedure of \cite{BurdenSoewongsoso19} predicts a slightly higher estimate of the time since the the MRCA and a similar estimate of the 
contemporaneous population size compared with the posterior distribution assuming the prior assumption $T_1 \to \infty$.  

\begin{figure}
 \centering
    \includegraphics[width=0.5\linewidth]{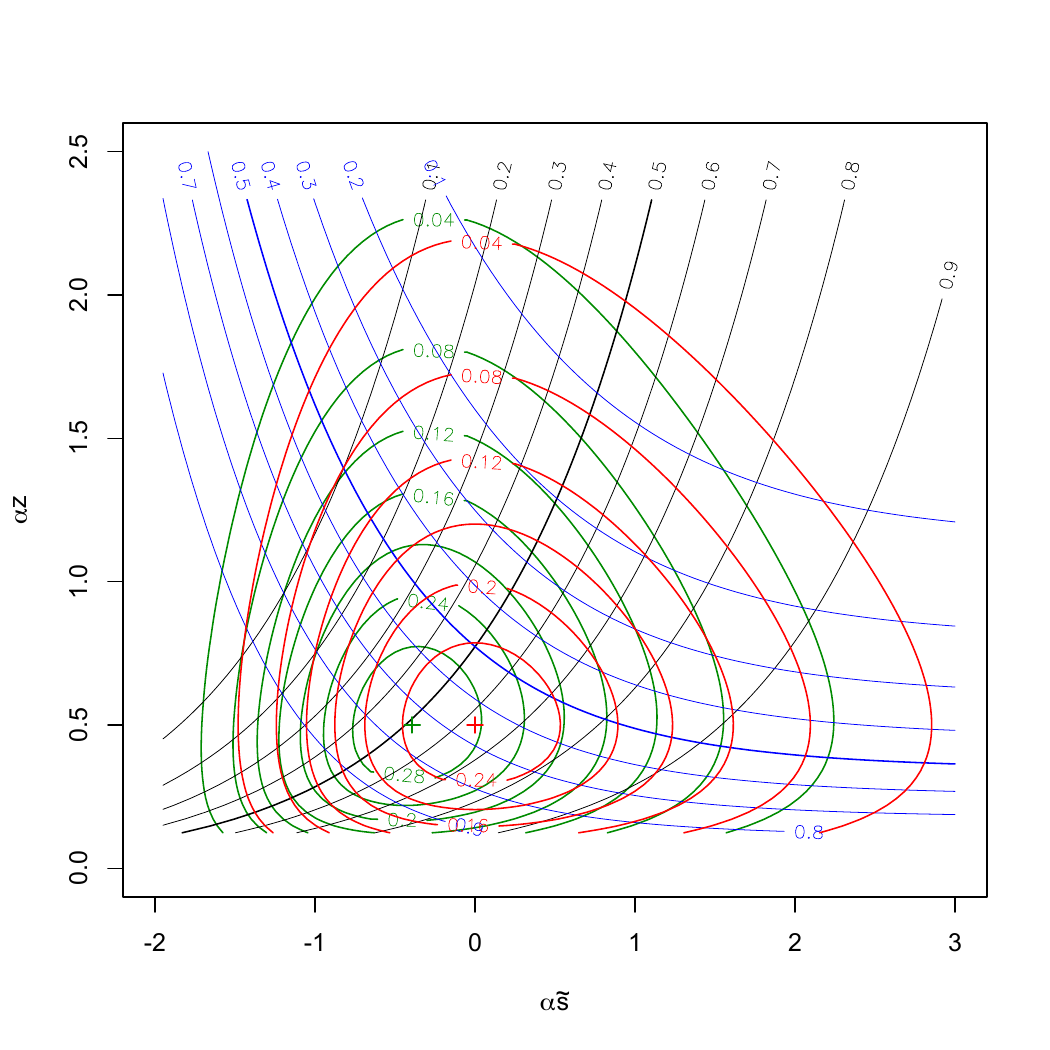}
 \caption{Comparison of the shifted likelihood surface $\alpha^{-2}\tilde{L}(\tilde{s}, z)$ (green contours), with the posterior density 
 $\alpha^{-2} f_{\tilde{T}_2,X_{\rm MRCA}}^{{\rm inf}\,T_1}(\tilde{s}, z) $ (red contours).  Also shown are the functions $\tilde{u}(\tilde{s}, z)$ (black contours) and 
 $\tilde{u}(\tilde{s}, z)$ (blue contours).}  
 \label{fig:contourJacobianLikelihood}
 \end{figure}

For completeness, we use results of the current paper to repeat an estimate of the time since mtE by \citet[Section~6]{burden2016genetic}.  The original calculation 
used a `median estimate' consisting of the intersection of the contours $\tilde{u}(\tilde{s}, z) = \tilde{v}(\tilde{s}, z) = \tfrac{1}{2}$.  Instead we use the 
shifted joint posterior density Eq.~(\ref{jointT2tildeXMRCA}), but the same assumed parameters.  With reference to the continuum scaling of the BGW process, Eq.~(\ref{BGWscaling}), 
parameters for an assumed slow exponential human population growth over the upper paleolithic era up to $10\,000$ BCE are: female population 
at the end of the period, $Y = 3 \times 10^6$, per-generation growth rate $\log\lambda = 0.0015$, and variance of the number of female offspring 
per generation $\sigma^2 = 2$, leading to a final scaled population $\alpha x = 2250$.   From Eq.~(\ref{jointT2tildeXMRCA}), 
$\mathbb{E}[\alpha \tilde{T}_2] = \gamma$, ${\rm Var}(\alpha \tilde{T}_2) = \pi^2/6$, $\mathbb{E}[\alpha X_{\rm MRCA}] = 1$, 
and ${\rm Var}(\alpha X_{\rm MRCA}) = \tfrac{1}{2}$, where $\gamma$ is the Euler-Mascheroni constant.  From Eqs.~(\ref{BGWscaling}) 
and (\ref{shiftedS}), the number $N_{\rm mtE}$ of generations since mtE and contemporaneous population size $Y_{\rm mtE}$ 
are 
\[
N_{\rm mtE} = \frac{\alpha \tilde{T}_2 + \log 2\alpha x}{\log\lambda}, \qquad Y_{\rm mtE} = \frac{\sigma^2}{\log\lambda} \alpha X_{\rm MRCA}. 
\]
Thus the mean and variance of the number of generations back from the end of the upper paleolithic to the time of mtE is 
\[
\mathbb{E}[N_{\rm mtE}] = \frac{\gamma + \log 2\alpha x}{\log\lambda} \approx 5\,990, \qquad 
	{\rm Var}(N_{\rm mtE}) = \frac{\pi^2}{6 (\log\lambda)^2} \approx 731\,000. 
\]
Assuming a generation time of 20~years, this equates to a time of approximately $120\,000$ years with a standard deviation of $17\,000$ years.  
The mean and variance of the female population at the time of mtE are 
\[
\mathbb{E}[Y_{\rm mtE}] = \frac{\sigma^2}{\log\lambda} \approx 1330, \qquad 
		{\rm Var}(Y_{\rm mtE}) = \frac{1}{2} \left(\frac{\sigma^2}{\log\lambda}\right)^2 \approx 888\,000, 
\]
or a standard deviation of approximately 942.

\bibliographystyle{elsarticle-harv}\biboptions{authoryear}

\end{document}